\newcommand{\de}{\partial}
\newcommand{\dbar}{\overline{\partial}}
\newcommand{\ddb}{\partial \ov{\partial}}
\newcommand{\ddbar}{\sqrt{-1} \partial \overline{\partial}}
\newcommand{\ov}[1]{\overline{#1}}
\newcommand{\mn}{\sqrt{-1}}
\newcommand{\ti}[1]{\tilde{#1}}
\newcommand{\vp}{\varphi}
\newcommand{\ve}{\epsilon}
\def\a{{\alpha}}
\renewcommand{\leq}{\leqslant}
\renewcommand{\geq}{\geqslant}
\newcommand{\be}{\begin{equation}}
\newcommand{\ee}{\end{equation}}
\def\Re{\mathrm{Re}}
\def\Im{\mathrm{Im}}
\def\Ren{\Re\left(e^{-\sqrt{-1}\hat\theta}\Omega_\varphi^n \right)}
\def\Renn{\Re\left(e^{-\sqrt{-1}\hat\theta}\Omega_\varphi^{n-1} \right)}
\def\Imn{\Im\left(e^{-\sqrt{-1}\hat\theta}\Omega_\varphi^n \right)}
\def\Imnn{\Im\left(e^{-\sqrt{-1}\hat\theta}\Omega_\varphi^{n-1} \right)}
\begin{document}
\newtheorem{claim}{Claim}
\newtheorem{theorem}{Theorem}[section]
\newtheorem{lemma}[theorem]{Lemma}
\newtheorem{corollary}[theorem]{Corollary}
\newtheorem{proposition}[theorem]{Proposition}
\newtheorem{question}{question}[section]
\theoremstyle{definition}
\newtheorem{definition}[theorem]{Definition}
\newtheorem{remark}[theorem]{Remark}

\numberwithin{equation}{section}

\title[The space of almost calibrated $(1,1)$ forms ]{The space of almost calibrated $(1,1)$ forms on a compact K\"ahler manifold}

\author[J. Chu]{Jianchun Chu}
\address{Department of Mathematics, Northwestern University, 2033 Sheridan Road, Evanston, IL 60208}
\email{jianchun@math.northwestern.edu}
\author[T. C. Collins]{Tristan C. Collins}
\address{Department of Mathematics, Massachusetts Institute of Technology, 77 Massachusetts Avenue, Cambridge, MA 02139}
 \thanks{T.C.C is supported in part by NSF grant DMS-1810924, NSF CAREER grant DMS-1944952 and an Alfred P. Sloan Fellowship. }
\email{tristanc@mit.edu}
\author[M.-C. Lee]{Man-Chun Lee}
\address{Department of Mathematics, Northwestern University, 2033 Sheridan Road, Evanston, IL 60208}
\email{mclee@math.northwestern.edu}
\thanks{M.-C. Lee is supported in part by NSF grant 1709894. }

\begin{abstract}
The space $\mathcal{H}$ of ``almost calibrated" $(1,1)$ forms on a compact K\"ahler manifold plays an important role in the study of the deformed Hermitian-Yang-Mills equation of mirror symmetry as emphasized by recent work of the second author and Yau \cite{CY18}, and is related by mirror symmetry to the space of positive Lagrangians studied by Solomon.  This paper initiates the study of the geometry of $\mathcal{H}$.  We show that $\mathcal{H}$ is an infinite dimensional Riemannian manifold with non-positive sectional curvature.  In the hypercritical phase case we show that $\mathcal{H}$ has a well-defined metric structure, and that its completion is a ${\rm CAT}(0)$ geodesic metric space, and hence has an intrinsically defined ideal boundary.  Finally, we show that in the hypercritical phase case $\mathcal{H}$ admits $C^{1,1}$ geodesics, improving a result of the second author and Yau \cite{CY18}.  Using results of Darvas-Lempert \cite{DL12} we show that this result is sharp.
\end{abstract}

\maketitle

\section{Introduction}

Let $(X,\omega)$ be a compact $n$-dimensional K\"ahler manifold and $[\alpha]$ be a class in $H^{1,1}(X,\mathbb{R})$. We use $\hat{\theta}$ to denote the argument of the complex number $\int_{X}(\omega+\sqrt{-1}\alpha)^{n}$, i.e.,
\begin{equation}\label{eq: topAngle}
\int_{X}(\omega+\sqrt{-1}\alpha)^{n} \in \mathbb{R}_{>0}e^{\sqrt{-1}\hat{\theta}}.
\end{equation}
which is well-defined modulo $2\pi$ provided the above integral does not vanish, an assumption we shall make throughout the paper.  The deformed Hermitian-Yang-Mills (dHYM) equation seeks a smooth function $\phi$ on $X$ such that the $(1,1)$ form $\alpha_{\phi} := \alpha+\mn\ddb \phi$ satisfies the non-linear partial differential equation
\begin{equation}\label{dHYM}
\begin{cases}
{\rm Im}\left(e^{-\sqrt{-1}\hat{\theta}}\left(\omega+\sqrt{-1}\alpha_{\phi}\right)^{n}\right) = 0, \\[2mm]
{\rm Re}\left(e^{-\sqrt{-1}\hat{\theta}}\left(\omega+\sqrt{-1}\alpha_{\phi}\right)^{n}\right) > 0.
\end{cases}
\end{equation}
The dHYM equation plays a fundamental role in mirror symmetry \cite{LYZ, MMMS} and its solvability is expected to be related to deep notions of stability in algebraic geometry.  We refer the reader to \cite{CXY} and the references therein for an introduction to the physical and mathematical aspects of the dHYM equation.  Understanding the solvability of the dHYM equation has recently generated a great deal of interest, beginning with the work of Jacob-Yau \cite{JY}, and the second author with Jacob and Yau \cite{CJY15}. Inspired by work of Solomon \cite{Sol}, Thomas \cite{Th} and Thomas-Yau \cite{ThY} in symplectic geometry the second author and Yau \cite{CY18} recently introduced an infinite dimensional GIT (Geometric Invariant Theory) approach to the dHYM equation.  In this approach a fundamental role is played by the following space
\begin{definition}
The space of {\em almost calibrated} $(1,1)$ forms in the class $[\alpha]$ is defined to be
\begin{equation}\label{eq: defnH}
\mathcal{H} = \{\phi\in C^{\infty}(X)~|~
{\rm Re}\left(e^{-\sqrt{-1}\hat{\theta}}(\omega+\sqrt{-1}\alpha_{\phi})^{n}\right)>0\}.
\end{equation}
\end{definition}
The space $\mathcal{H}$ is a (possibly empty) open subset of the space of smooth, real valued functions on $X$, and hence inherits the structure of an infinite dimensional manifold.  Under mirror symmetry the space $\mathcal{H}$ is mirror to the space of positive (or almost calibrated) Lagrangians studied by Solomon \cite{Sol, Solomon14}; this is the motivation for name we have attached to $\mathcal{H}$.  Assuming, as we shall do throughout the paper, that $\mathcal{H}$ is non-empty, we can define a Riemannian structure on $\mathcal{H}$ in the following way; for any $\phi\in\mathcal{H}$, the tangent space $T_{\phi}\mathcal{H} = C^{\infty}(X)$. Define a Riemannian metric on $\mathcal{H}$ by
\[
\langle\psi_{1},\psi_{2}\rangle_{\phi}
= \int_{X}\psi_{1}\psi_{2}{\rm Re}\left(e^{-\sqrt{-1}\hat{\theta}}(\omega+\sqrt{-1}\alpha_{\phi})^{n}\right),
\]
for $\psi_{1},\psi_{2}\in T_{\phi}\mathcal{H}$. Let $\phi(t)$ ($t\in[0,1]$) be a smooth path in $\mathcal{H}$. The length of $\phi$ is given by
\[
{\rm length}(\phi) := \int_{0}^{1}\left(  \int_{X}\dot{\phi}^{2}{\rm Re}\left(e^{-\sqrt{-1}\hat{\theta}}(\omega+\sqrt{-1}\alpha_{\phi})^{n}\right) \right)^{\frac{1}{2}}dt,
\]
where $\dot{\phi}=\frac{\de\phi}{\de t}$. Therefore, for any $\phi_{0},\phi_{1}\in\mathcal{H}$, the Riemannian metric $\mathcal{H}$ defines a ``distance" function on $\mathcal{H}\times\mathcal{H}$:
\[
d(\phi_{0},\phi_{1})
:= \inf\{{\rm length}(\phi)~|~\text{$\phi$ is a smooth path in $\mathcal{H}$ joining $\phi_{0}, \phi_{1}$}\}.
\]
The corresponding geodesic equation is \cite{CY18}
\begin{equation}\label{eq: introGeoEq}
\begin{aligned}
&\ddot{\phi}{\rm Re}\left(e^{-\sqrt{-1}\hat{\theta}}(\omega+\sqrt{-1}\alpha_{\phi})^{n}\right)\\
&+n\sqrt{-1}\de\dot{\phi}\wedge\dbar\dot{\phi}\wedge{\rm Im}\left(e^{-\sqrt{-1}\hat{\theta}}(\omega+\sqrt{-1}\alpha_{\phi})^{n-1}\right)= 0.
\end{aligned}
\end{equation}
This equation is a fully nonlinear degenerate elliptic PDE, and hence in general, we cannot expect the existence of smooth solutions. In other words, for any $\phi_{0},\phi_{1}\in\mathcal{H}$, there may not exist a smooth geodesic between $\phi_0, \phi_1$. Instead, the second author and Yau \cite{CY18} introduced an $\epsilon$-regularized version of geodesic equation which is a fully nonlinear elliptic equation whose solution $\phi^{\epsilon}$ is an approximate geodesic, which we refer to as an $\epsilon$-geodesic.  Assuming that the class $[\alpha]$ satisfies a {\em hypercritical phase} condition (see Section~\ref{Geodesics section} for a definition), the second author and Yau proved the existence of smooth $\epsilon$-geodesics, and weak geodesics with $C^{1,\alpha}$ regularity, for any $\alpha \in (0,1)$.  We remark that a real version of~\eqref{eq: introGeoEq}, originating from Solomon's work in symplectic geometry \cite{Sol} has recently been studied by several groups \cite{DarRu, Del, RuSol}.  In the hypercritical phase case, Jacob extended the techniques of \cite{RuSol} to prove the existence of weak geodesics in the space $\mathcal{H}$ with $C^0$ regularity \cite{Jac}.  The purpose of this paper is to study more detail the space $(\mathcal{H},d)$.   Our first result shows that, in the hypercritical phase case, $d$ is actually a distance function on $\mathcal{H}$ and that the distance $d$ can be approximated by the length of $\epsilon$-geodesics.

\begin{theorem}\label{Metric structure}
Assume that $[\alpha]$ has hypercritical phase.  Then $(\mathcal{H},d)$ is a metric space, and for any $\phi_{0},\phi_{1}\in\mathcal{H}$, we have
\[
d(\phi_{0},\phi_{1}) = \lim_{\epsilon\rightarrow0}{\rm length}(\phi^{\epsilon}),
\]
where $\phi^{\epsilon}$ is the $\epsilon$-geodesic joining $\phi_{0}, \phi_{1}$.  Furthermore, $d: \mathcal{H}\times \mathcal{H} \rightarrow \mathbb{R}$ is $C^1$ differentiable away from the diagonal.
\end{theorem}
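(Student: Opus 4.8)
The plan is to follow the by-now standard strategy for metric spaces of K\"ahler potentials, adapting the framework of Darvas and the original arguments of Chen to the dHYM setting, and exploiting the hypercritical phase condition to gain enough convexity/ellipticity. I would organize the proof in three parts corresponding to the three assertions: (i) $d$ is non-degenerate, hence a genuine metric; (ii) $d$ is approximated by lengths of $\epsilon$-geodesics; (iii) $d$ is $C^1$ off the diagonal.

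\medskip

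\textbf{Step 1: Non-degeneracy via a Finsler/Chen-type estimate.} First I would show there is a constant $C = C(\phi_0,\phi_1)>0$ such that $d(\phi_0,\phi_1) \ge C\|\phi_0-\phi_1\|_{L^\infty}$ (or at least $\ge C\int_X|\phi_0-\phi_1|\,\Ren$ up to normalization). The classical trick is: along any smooth path $\phi(t)$, one controls $\frac{d}{dt}\int_X \phi(t)\,\Ren[\phi(t)]$ by the length element. Here the measure $\mu_\phi := \Ren$ depends on $\phi$, so differentiating produces an extra term coming from $\frac{d}{dt}\mu_\phi$, and one must check (using hypercritical phase, which forces $\Ren>0$ and controls $\Renn$) that this extra term is dominated by $\dot\phi^2$ times the measure, i.e.\ by the square of the length element --- then Cauchy--Schwarz closes the estimate on a fixed-length ball. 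The hypercritical phase assumption is exactly what makes the relevant $(1,1)$-forms positive and gives the needed sign/bound; I expect this computation, and the precise dependence of $C$ on the endpoints, to require care but to be tractable. Combined with the triangle inequality (immediate from the definition of $d$ as an infimum of lengths) and symmetry, this gives that $(\mathcal H,d)$ is a metric space.

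\medskip

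\textbf{Step 2: Length of $\epsilon$-geodesics approximates $d$.} By \cite{CY18}, in the hypercritical phase case the $\epsilon$-geodesic $\phi^\epsilon$ joining $\phi_0,\phi_1$ exists, is smooth, and converges in $C^{1,\alpha}$ (indeed I will upgrade this to $C^{1,1}$ later in the paper, but $C^{1,\alpha}$ suffices here) to the weak geodesic as $\epsilon\to 0$, with uniform a priori estimates. For the upper bound $d(\phi_0,\phi_1)\le \liminf_\epsilon {\rm length}(\phi^\epsilon)$ one uses that $\phi^\epsilon$ is an admissible competitor path (after a small perturbation to keep its endpoints exactly $\phi_0,\phi_1$, or by a reparametrization/endpoint-adjustment argument) and lower semicontinuity of length under $C^1$ convergence of the uniformly bounded approximants. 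For the lower bound $d(\phi_0,\phi_1)\ge \limsup_\epsilon {\rm length}(\phi^\epsilon)$, the key point is that $\phi^\epsilon$ is a genuine geodesic for a perturbed Riemannian metric whose length functional dominates, up to $O(\epsilon)$, the true one; more precisely one shows that along \emph{any} competitor path $\psi(t)$ the $\epsilon$-energy is bounded below by ${\rm length}(\phi^\epsilon)^2 - C\epsilon$, using that $\phi^\epsilon$ minimizes the $\epsilon$-energy among paths with the same endpoints. This is the analogue of the Chen--Calabi argument that the Mabuchi geodesic is length-minimizing; the hypercritical phase condition guarantees the $\epsilon$-energy is convex along paths, which is what makes $\phi^\epsilon$ the minimizer. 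The main obstacle in this step is handling the endpoint mismatch and the fact that $\phi^\epsilon$ need not lie in $\mathcal H$-bounded region uniformly --- both are dealt with by the uniform estimates of \cite{CY18} and a cutoff/reparametrization.

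\medskip

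\textbf{Step 3: $C^1$ regularity of $d$ off the diagonal.} Here I would mimic the argument (going back to Chen, refined by Calamai, Chen--Sun, Darvas) that the distance function on the space of K\"ahler metrics is $C^1$ away from the diagonal. Fix $\phi_0\ne\phi_1$ and let $\phi(t)$, $t\in[0,1]$, be the $C^{1,1}$ weak geodesic (limit of $\epsilon$-geodesics) joining them; by Step 2 it is length-minimizing and $d(\phi_0,\phi_1)={\rm length}(\phi)$. The first-variation formula for the (squared) distance, computed along the $\epsilon$-geodesics and passed to the limit using the $C^{1,1}$-bounds and uniform positivity of $\Ren$ from hypercritical phase, expresses $D_{\phi_1}\bigl(\tfrac12 d(\phi_0,\cdot)^2\bigr)(\psi) = \int_X \psi\,\dot\phi(1)\,\Ren[\phi_1]$, and similarly at the $\phi_0$-end. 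The right-hand side depends continuously on $(\phi_0,\phi_1)$: this is the crux, and it follows from continuous dependence of the weak geodesic (hence of $\dot\phi(0),\dot\phi(1)$) on its endpoints, which in turn comes from the uniform a priori estimates together with uniqueness of $C^{1,1}$ geodesics in the hypercritical phase case (the latter following from convexity of energy, or from a comparison/maximum-principle argument for the degenerate elliptic geodesic equation). Since $d$ is locally Lipschitz (Step 1 plus the triangle inequality give both upper and lower linear bounds) and has a continuous Gateaux derivative in every direction, standard arguments promote this to $C^1$ (Fr\'echet) differentiability of $d^2$, hence of $d$, away from the diagonal. I expect the genuine difficulty here to be establishing continuous dependence of $\dot\phi(0),\dot\phi(1)$ on the endpoints with only $C^{1,1}$ regularity of the geodesic; the saving grace is again hypercritical phase, which keeps the geodesic equation uniformly elliptic on the relevant set and supplies the compactness needed to pass to limits.
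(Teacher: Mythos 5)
Your overall scaffolding (Chen--Blocki framework via $\epsilon$-geodesics, first-variation for $C^1$) is the right one, and Step 3 matches the paper's proof in spirit: the paper derives the derivative formula~\eqref{eq: distanceDer} by differentiating the length of $\epsilon$-geodesics from a fixed point to a moving endpoint, integrating, and passing $\epsilon\rightarrow 0$ then $t_0\rightarrow 0$. However, Step 1 as you describe it has a genuine gap. You want to bound $\frac{d}{dt}\int_X\phi(t)\,\Ren[\phi(t)]$ along an \emph{arbitrary} smooth competitor path, controlling the extra term from $\frac{d}{dt}\Ren[\phi(t)]$. After integrating by parts this extra term is $-n\int_X\sqrt{-1}\de\phi\wedge\dbar\dot\phi\wedge\Imnn$, which is linear (not quadratic) in $\dot\phi$ and involves $\de\dot\phi$ rather than $\dot\phi$ --- it is not dominated by the length element $\left(\int\dot\phi^2\Ren\right)^{1/2}$ along an arbitrary path, because the measure $\Ren[\phi(t)]$ can degenerate or concentrate as $\phi(t)$ approaches $\de\mathcal H$, and there is no a priori bound on $\Omega_{\phi(t)}$. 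The paper does not attempt a direct estimate along arbitrary paths; it proves non-degeneracy as a \emph{consequence} of the $\epsilon$-geodesic approximation, in the reverse logical order from what you propose. Concretely: it first establishes $d=\lim_\epsilon{\rm length}(\phi^\epsilon)$ (Proposition~\ref{distance-geodesic}, following Blocki) and only then proves the explicit lower bound $d(\phi_0,\phi_1)^2\geq \max\left(\int_{\{\phi_0>\phi_1\}}(\phi_0-\phi_1)^2\Ren[\phi_0],\,\ldots\right)$ via a lower bound on the energy $E^\epsilon$ of the $\epsilon$-geodesic (Lemma~\ref{Estimates for E}). That lower bound rests on the crucial Lemma~\ref{Almost time convexity}: $\ddot\phi^\epsilon\geq -C\epsilon^2$, obtained from the Schur--Horn theorem together with the hypercritical phase hypothesis --- this is the precise place where hypercritical phase enters, and it does not appear in your outline.

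Your Step 2 is also looser than what is actually needed. The approximation $d=\lim_\epsilon{\rm length}(\phi^\epsilon)$ is not proved by asserting that $\phi^\epsilon$ minimizes an ``$\epsilon$-energy'' among competitor paths; the $\epsilon$-geodesic equation~\eqref{epsilon geodesic eqn} is the dHYM equation on $\mathcal X$ with a perturbed background K\"ahler form, and it is not presented as the Euler--Lagrange equation of a functional whose minimizer you may invoke. Instead the paper proves a quantitative weak triangle inequality (Lemma~\ref{triangle}): for a smooth path $\psi(t)$ and a fixed $\tilde\psi\in\mathcal H$, if $\ell_2(t)$ is the length of the $\epsilon$-geodesic from $\psi(t)$ to $\tilde\psi$ then $\ell_1'+\ell_2'\geq -C\epsilon$, established by directly differentiating $\ell_2(t)$ using the $\epsilon$-geodesic equation and the maximum principle to control $\phi_t$. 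Combined with the a priori bound ${\rm length}(\phi^\epsilon)\leq C\|\phi_0-\phi_1\|_\infty$ (Lemma~\ref{classical-metric-relation}, proved by a barrier argument in $t$ which again uses hypercritical phase), this gives the approximation statement. So: reorder your logic to derive non-degeneracy from the $\epsilon$-geodesic energy lower bound, and replace the minimality claim in Step 2 by the explicit differential inequality of Lemma~\ref{triangle}.
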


In fact, we give an explicit formula for the derivative, which is useful in its own right; see equation~\eqref{eq: distanceDer}.  Next we consider the curvature of the infinite dimensional Riemannian manifold $(\mathcal{H},\langle\cdot,\cdot\rangle)$. We show that there exists a Levi-Civita connection on $(\mathcal{H},\langle\cdot,\cdot\rangle)$ and the corresponding sectional curvature is non-positive.  Besides being of intrinsic interest, this result strengthens the analogy with finite dimensional GIT.

\begin{theorem}\label{Connection and curvature}
The Riemannian manifold $(\mathcal{H},\langle\cdot,\cdot\rangle)$ can be equipped with a Levi-Civita connection, and the sectional curvature is non-positive, i.e., for any $\phi\in\mathcal{H}$ and $\psi,\eta\in T_{\phi}\mathcal{H}=C^{\infty}(X)$, we have
\[
K(\psi,\eta) := \frac{\langle R(\psi,\eta)\eta,\psi\rangle}
{\langle\psi,\psi\rangle \langle\eta,\eta\rangle-\langle\psi,\eta\rangle^{2}} \leq 0.
\]
\end{theorem}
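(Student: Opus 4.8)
The plan is to follow the strategy which is by now standard for spaces of K\"ahler potentials (Mabuchi, Semmes, Donaldson), adapting it to the weight ${\rm Re}(e^{-\sqrt{-1}\hat\theta}\alpha_\phi^n)$ in place of $\omega_\phi^n$. Write $\Omega_\phi := \omega + \sqrt{-1}\alpha_\phi$ and $\mu_\phi := {\rm Re}(e^{-\sqrt{-1}\hat\theta}\Omega_\phi^n)$, so $\langle\psi_1,\psi_2\rangle_\phi = \int_X \psi_1\psi_2\,\mu_\phi$. First I would compute the first variation of the metric: for a path $\phi(t)$ with $\dot\phi = \psi$, one differentiates $\mu_\phi$ using $\frac{d}{dt}\Omega_\phi = \sqrt{-1}\partial\bar\partial\psi$, obtaining $\frac{d}{dt}\mu_\phi = -n\,\sqrt{-1}\partial\bar\partial\psi\wedge {\rm Im}(e^{-\sqrt{-1}\hat\theta}\Omega_\phi^{n-1})$ (the sign coming from $\sqrt{-1}\cdot(\cdot) = \cos + \sqrt{-1}\sin$ bookkeeping). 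Integrating by parts twice, the Christoffel symbol is forced to be
\[
\Gamma_\phi(\psi,\eta) = -\frac{n\sqrt{-1}}{2}\,\frac{\partial\psi\wedge\bar\partial\eta + \partial\eta\wedge\bar\partial\psi}{\mu_\phi}\wedge {\rm Im}\left(e^{-\sqrt{-1}\hat\theta}\Omega_\phi^{n-1}\right),
\]
interpreted as a function via division by the volume form $\mu_\phi$ (which is positive precisely because $\phi\in\mathcal H$). One then checks directly that $\nabla_X Y := D_X Y - \Gamma_\phi(\dot\phi, Y)$ — in coordinates, $\ddot\phi$ being replaced by the covariant expression — is torsion-free and metric-compatible; compatibility is exactly the first-variation identity above, and symmetry of $\Gamma_\phi$ in its two arguments is manifest. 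This also shows the geodesic equation $\nabla_{\dot\phi}\dot\phi = 0$ reproduces \eqref{eq: introGeoEq}, confirming consistency.

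Next, for the curvature, I would compute along a two-parameter family $\phi(s,t)$ with $\partial_s\phi = \eta$, $\partial_t\phi = \psi$ at the basepoint, using the formula $R(\psi,\eta)\eta = \nabla_t\nabla_s\nabla_t\phi - \nabla_s\nabla_t\nabla_t\phi$ evaluated with all the mixed $s$-$t$ derivatives of $\phi$ vanishing at the point (one may arrange $\phi$ linear in $s,t$). The key mechanism, exactly as in the K\"ahler potential case, is that after all cancellations the curvature operator collapses to a pointwise algebraic expression in the "gradients" $\partial\psi$, $\partial\eta$ contracted against ${\rm Im}(e^{-\sqrt{-1}\hat\theta}\Omega_\phi^{n-1})$, divided by powers of $\mu_\phi$. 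Concretely I expect to land on something of the shape
\[
\langle R(\psi,\eta)\eta,\psi\rangle_\phi = -\int_X \frac{\big|\{\psi,\eta\}\big|^2}{\mu_\phi},
\]
where $\{\psi,\eta\}$ is the (real, function-valued) expression obtained by writing $\{\psi,\eta\}\,\mu_\phi = n\sqrt{-1}(\partial\psi\wedge\bar\partial\eta - \partial\eta\wedge\bar\partial\psi)\wedge {\rm Im}(e^{-\sqrt{-1}\hat\theta}\Omega_\phi^{n-1})$ — a Poisson-type bracket. Since this is manifestly $\le 0$ and the denominator in $K(\psi,\eta)$ is $\ge 0$ by Cauchy–Schwarz, non-positivity of the sectional curvature follows immediately.

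The main obstacle I anticipate is \emph{not} conceptual but bookkeeping: verifying that the second covariant derivatives genuinely organize into the perfect square $|\{\psi,\eta\}|^2/\mu_\phi$ requires carefully tracking the terms coming from differentiating $\mu_\phi$ and ${\rm Im}(e^{-\sqrt{-1}\hat\theta}\Omega_\phi^{n-1})$ inside $\Gamma_\phi$, and showing the "bad" indefinite terms cancel. In the classical Mabuchi setting this works because ${\rm Im}(\cdots)$ is replaced by $\omega_\phi^{n-1}$ and one has the identity $\Delta_{\phi}$-integration-by-parts available; here the analogous role is played by the $(n-1,n-1)$ form ${\rm Im}(e^{-\sqrt{-1}\hat\theta}\Omega_\phi^{n-1})$, and one must use that its exterior derivative is controlled (indeed it is closed, being the imaginary part of a power of a closed form), so that integrations by parts produce no boundary-type remainder. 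A secondary technical point is the functional-analytic setup for the Levi-Civita connection on this Fréchet manifold: one should note that $\Gamma_\phi$ is a well-defined smooth bilinear operator $C^\infty(X)\times C^\infty(X)\to C^\infty(X)$ (division by the nowhere-vanishing $\mu_\phi$ is harmless), so the connection exists in the naive sense of a family of Christoffel symbols, and uniqueness follows from the usual Koszul formula argument, which goes through verbatim since it is purely algebraic.
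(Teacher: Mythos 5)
Your derivation of the Levi-Civita connection is essentially the same as the paper's: you differentiate the weight $\mu_\phi={\rm Re}(e^{-\sqrt{-1}\hat\theta}\Omega_\phi^n)$ using $\frac{d}{dt}\mu_\phi=-n\sqrt{-1}\partial\bar\partial\dot\phi\wedge{\rm Im}(e^{-\sqrt{-1}\hat\theta}\Omega_\phi^{n-1})$, integrate by parts, and read off the Christoffel symbol, which agrees (up to sign convention) with the paper's $Q(\nabla\psi,\nabla\eta)$. That part is fine.

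The gap is in the curvature claim. You ``expect to land on''
\[
\langle R(\psi,\eta)\eta,\psi\rangle_\phi=-\int_X\frac{|\{\psi,\eta\}|^2}{\mu_\phi},
\]
a single perfect-square integral as in the Mabuchi--Semmes--Donaldson setting. This is not correct here, and the analogy to the K\"ahler potential space breaks precisely at this step. When the paper actually assembles $\nabla_{\phi_t}\nabla_{\phi_s}\eta-\nabla_{\phi_s}\nabla_{\phi_t}\eta$ it obtains (equation \eqref{Expression of R 2}) \emph{three} terms:
\[
-n(n-1)\int_X \sqrt{-1}\partial\psi\wedge\bar\partial\psi\wedge\sqrt{-1}\partial\eta\wedge\bar\partial\eta\wedge{\rm Re}\left(e^{-\sqrt{-1}\hat\theta}\Omega_\phi^{n-2}\right)
-\int_X Q(\nabla\psi,\nabla\psi)Q(\nabla\eta,\nabla\eta)\mu_\phi
+\int_X Q(\nabla\psi,\nabla\eta)^2\,\mu_\phi.
\]
None of the three has a definite sign on its own: ${\rm Re}(e^{-\sqrt{-1}\hat\theta}\Omega_\phi^{n-2})$ is not a positive $(n-2,n-2)$ form, and $Q$ is a quadratic form built from ${\rm Im}(e^{-\sqrt{-1}\hat\theta}\Omega_\phi^{n-1})$, which is indefinite, so the last two terms do \emph{not} obey Cauchy--Schwarz. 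Establishing non-positivity requires expanding everything pointwise in eigen-coordinates (diagonalizing $\omega^{-1}\alpha_\phi$), writing the three integrands in terms of $\lambda_i$, $\tan(\Theta-\hat\theta)$, and $\partial_i\psi,\partial_i\eta$, observing a cascade of cancellations (in particular \eqref{eq: secondTerm} cancels a piece of \eqref{eq: firstTerm}, and the real part of \eqref{eq: thirdTerm} cancels another piece via the identity ${\rm Re}(A\bar B)={\rm Re}A\,{\rm Re}B+{\rm Im}A\,{\rm Im}B$), and then showing the leftover \eqref{eq: finCurv} splits into a weighted sum of squares $|X_{ij}-X_{ji}|^2$ with $X_{ij}=\partial_i\psi\,\partial_j\eta$, plus a genuine perfect square $\big(\sum_i\frac{(\tan(\Theta-\hat\theta)-\lambda_i){\rm Im}(\partial_i\psi\partial_{\bar i}\eta)}{1+\lambda_i^2}\big)^2$. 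Only the last piece resembles your proposed $\{\psi,\eta\}^2$; it does not account for the full curvature. Indeed the paper's characterization of the flat $2$-planes shows that $R(\psi,\eta,\eta,\psi)=0$ forces $\partial\psi\wedge\partial\eta\equiv 0$, which your single-square formula would not see. So the ``bookkeeping'' you defer is where the actual mathematical content lies; the pointwise algebraic inequality is not the trivial Cauchy--Schwarz you anticipated, and without carrying out that computation the non-positivity is not established.
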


This result is closely related, by mirror symmetry, to a result of Solomon \cite{Solomon14}, which shows that the infinite dimensional manifold of positive Lagrangians admits a metric with negative sectional curvature.  

As a metric space, $(\mathcal{H},d)$ is not complete, and so it is natural to consider its completion $(\widetilde{\mathcal{H}},\ti{d})$. We show that that the non-positive curvature of $(\mathcal{H}, \langle, \cdot, \cdot \rangle)$ carries over to the completion.  Namely, we show that $(\widetilde{\mathcal{H}},\ti{d})$ is a geodesic metric space with non-positive curvature in the sense of Alexandrov. Precisely, we prove 
\begin{theorem}\label{CAT(0) space}
Suppose $[\alpha]$ has hypercritical phase.  Then $(\widetilde{\mathcal{H}},\ti{d})$ is a ${\rm CAT}(0)$ space.
\end{theorem}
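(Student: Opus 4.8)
The plan is to verify that $(\widetilde{\mathcal{H}},\ti d)$ satisfies the CAT(0) inequality by combining the non-positive sectional curvature of $(\mathcal{H},\langle\cdot,\cdot\rangle)$ from Theorem~\ref{Connection and curvature} with the metric approximation by $\epsilon$-geodesics from Theorem~\ref{Metric structure}. Recall that a complete geodesic metric space is CAT(0) if and only if it satisfies the CN inequality of Bruhat--Tits: for any three points $p,q_0,q_1$ and the midpoint $m$ of a geodesic from $q_0$ to $q_1$,
\[
\ti d(p,m)^2 \leq \tfrac12 \ti d(p,q_0)^2 + \tfrac12 \ti d(p,q_1)^2 - \tfrac14 \ti d(q_0,q_1)^2.
\]
So there are really two things to establish: (i) $(\widetilde{\mathcal{H}},\ti d)$ is a \emph{geodesic} space, i.e. any two points are joined by a minimizing geodesic; and (ii) the CN inequality holds.

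For the geodesic property, I would use the $C^{1,\alpha}$ (or better, $C^{1,1}$) weak geodesics produced in \cite{CY18} together with Theorem~\ref{Metric structure}: given $\phi_0,\phi_1\in\mathcal{H}$, the $\epsilon$-geodesics $\phi^\epsilon$ converge as $\epsilon\to0$ to a weak geodesic $\phi^0$, and Theorem~\ref{Metric structure} gives $d(\phi_0,\phi_1)=\lim_\epsilon \mathrm{length}(\phi^\epsilon)$. One checks that $\phi^0$, reparametrized by arclength, is a minimizing path of length $d(\phi_0,\phi_1)$, hence a geodesic in the metric sense. For general points of the completion $\widetilde{\mathcal{H}}$ one approximates by points of $\mathcal{H}$ and passes to a limit, using that the geodesics between nearby pairs stay close (which will itself follow from the convexity estimates below); this is the standard argument that a complete locally-geodesic space built as a limit of geodesic segments is geodesic.

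For the CN inequality, the strategy is to prove it first for $\epsilon$-geodesics between points of $\mathcal{H}$ and then take limits. Concretely, fix $p=\phi\in\mathcal{H}$ and an $\epsilon$-geodesic $\phi^\epsilon(t)$, $t\in[0,1]$, from $q_0$ to $q_1$; let $f_\epsilon(t) := d(\phi,\phi^\epsilon(t))^2$. Using the explicit formula for the derivative of $d$ (equation~\eqref{eq: distanceDer}) and the first variation of energy, together with the non-positivity of the sectional curvature from Theorem~\ref{Connection and curvature} (which controls the second variation of the distance function / the Hessian of $f_\epsilon$ along the approximate geodesic), I would show that $f_\epsilon$ is essentially convex up to an error that is $O(\epsilon)$ plus a term coming from the failure of $\phi^\epsilon$ to be an exact geodesic — more precisely $\ddot f_\epsilon \geq \tfrac12 \mathrm{length}(\phi^\epsilon)^2 - C\epsilon$ in a suitable weak sense. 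Evaluating at $t=\tfrac12$ and using $f_\epsilon(0)=d(\phi,q_0)^2$, $f_\epsilon(1)=d(\phi,q_1)^2$ gives the CN inequality for the $\epsilon$-geodesic up to $O(\epsilon)$; letting $\epsilon\to0$ and invoking Theorem~\ref{Metric structure} (so that $\mathrm{length}(\phi^\epsilon)\to d(q_0,q_1)$ and the midpoint of $\phi^\epsilon$ converges to the midpoint of the limiting geodesic) yields the exact inequality on $\mathcal{H}$. Finally, continuity of $\ti d$ and density of $\mathcal{H}$ in $\widetilde{\mathcal{H}}$ extend it to all of $\widetilde{\mathcal{H}}$, and the Bruhat--Tits characterization gives CAT(0).

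The main obstacle I anticipate is the rigorous control of the second variation of $d(\phi,\cdot)^2$ along an $\epsilon$-geodesic. The distance function is only $C^1$ (Theorem~\ref{Metric structure}), not $C^2$, and $\epsilon$-geodesics are not genuine geodesics, so "$\ddot f_\epsilon \geq \tfrac12 \mathrm{length}^2 - C\epsilon$" must be made sense of carefully — e.g. by differentiating the first-variation formula~\eqref{eq: distanceDer} along $\phi^\epsilon$, bounding the resulting curvature term using Theorem~\ref{Connection and curvature}, and separately estimating the contribution of the $\epsilon$-geodesic error term $\epsilon\,\omega^n$ (or whatever the precise form of the regularization is) uniformly in $t$. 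Getting these estimates uniform in $\epsilon$, and ensuring the midpoints converge in $\ti d$, is where the real work lies; the CAT(0) conclusion is then formal.
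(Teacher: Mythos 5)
Your high-level strategy — establish the geodesic-space property via weak geodesics, prove a CN/convexity inequality for $\epsilon$-geodesics using non-positive curvature, pass to the limit $\epsilon\to0$, and extend to $\widetilde{\mathcal{H}}$ by density — matches the paper's. But the technical route you propose for the second-variation estimate diverges from what the paper actually does, and your own caveat about the distance function being only $C^1$ is exactly the obstruction the paper is designed to avoid. Rather than differentiating the first-variation formula~\eqref{eq: distanceDer} a second time (you would be differentiating a function that is a priori only $C^1$, and also treating the $\epsilon$-geodesic error as merely $O(\epsilon)$), the paper introduces a \emph{two-parameter} family $\varphi_{i,j}(s,t)$: for each $s$, $t\mapsto\varphi_{i,j}(s,t)$ is a $j^{-1}$-geodesic from $R$ to the $i^{-1}$-geodesic $\varphi^i(s)$, and one works with the smooth total energy $E^{tot}_{i,j}(s)$ rather than with $d^2$ itself. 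The key technical input is Lemma~\ref{2nd-der}, which shows $\partial_t\|\varphi_s\|\big|_{t=1}\ge\|\varphi_s\|\big|_{t=1}$; proving it requires showing that $\langle\nabla_Y\nabla_XX,Y\rangle\ge0$, i.e.\ the error introduced by using an $\epsilon$-geodesic instead of a genuine one has a \emph{favorable sign}, not merely small magnitude. This positivity is a real computation that depends on the hypercritical phase hypothesis ($\Theta-\hat\theta\in(-\pi/2,\pi/2)$), and it is not something one gets for free by writing the error as $O(\epsilon)$. In addition, the paper needs the convexity of the functional $\mathcal{J}$ along $\epsilon$-geodesics (from \cite{CY18}) to control the boundary term $\langle X,\nabla_YY\rangle|_{t=1}$. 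Your outline omits both of these ingredients, which are where the proof actually lives; as written it would stall exactly at the point you yourself flag as delicate.
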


One upshot of this result is that, at least when $[\alpha]$ has hypercitical phase, the space $(\mathcal{H}, d)$ has a {\em intrinsically defined} ideal boundary.  According to GIT and the work in \cite{CY18}, the ideal boundary is intimately connected to both the existence of solutions to the dHYM equation, and algebraic stability conditions.



Finally, we obtain an improved regularity result for geodesics in the space $\mathcal{H}$, under the hypercritical phase assumption. 
\begin{theorem}\label{C11 regularity}
Suppose $[\alpha]$ has hypercritical phase.  Then for any $\phi_{0},\phi_{1}\in\mathcal{H}$, the weak geodesic whose existence is established in Theorem~\ref{thm: CY} is $C^{1,1}$.
\end{theorem}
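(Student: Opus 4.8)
The plan is to obtain the $C^{1,1}$ estimate by taking a limit of the smooth $\epsilon$-geodesics $\phi^\epsilon$ whose existence is guaranteed by \cite{CY18}, and showing that the natural second-order quantities are bounded uniformly in $\epsilon$. Since the weak geodesic of Theorem~\ref{thm: CY} is the uniform limit of the $\phi^\epsilon$ (and, by Theorem~\ref{Metric structure}, realizes the distance), it suffices to prove an a priori bound of the form $\|\phi^\epsilon\|_{C^{1,1}(X\times[0,1])}\le C$ with $C$ independent of $\epsilon$. First I would recall the $\epsilon$-geodesic equation from \cite{CY18} and note that, after the usual substitution turning the path $\phi^\epsilon(t)$ into a function $\Phi$ on the product $X\times\Sigma$ with $\Sigma$ an annulus (or a strip), the equation becomes a genuine (non-degenerate, when $\epsilon>0$) fully nonlinear elliptic equation of the form $\mathrm{Re}\bigl(e^{-\sqrt{-1}\hat\theta}(\Omega_\Phi)^{n+1}\bigr)=\epsilon\,\mathrm{Re}\bigl(e^{-\sqrt{-1}\hat\theta}(\Omega_0)^{n+1}\bigr)$, where $\Omega_\Phi=\omega+\sqrt{-1}(\alpha+\sqrt{-1}\ddb\Phi)$ on $X\times\Sigma$. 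The hypercritical phase hypothesis is exactly what guarantees that this operator is concave (or can be made so after a change of variable), which is the structural input that makes second-derivative estimates possible.

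The key steps, in order: (1) the $C^0$ bound, which follows from the maximum principle applied to the elliptic $\epsilon$-geodesic equation, comparing $\phi^\epsilon$ against affine functions of $t$ built from $\max_X\phi_0,\phi_1$; (2) the $C^1$ bound in the $X$-directions (the gradient $|\nabla_X\phi^\epsilon|$) and the $t$-derivative bound $|\dot\phi^\epsilon|\le C$, which is where the boundary data and the structure of the equation enter — the $t$-derivative bound is essentially the statement that the $\epsilon$-geodesic has uniformly bounded speed, which is already implicit in \cite{CY18} and in Theorem~\ref{Metric structure}; (3) the second-order estimate, i.e., a uniform bound on $|\ddb\phi^\epsilon|_{g}$ on $X\times\Sigma$, where $g$ is a fixed background metric. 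For step (3) I would apply the maximum principle to a test quantity of the form $\log(\mathrm{tr}_g g_\Phi) - A\Phi + $ (lower-order corrections), where $g_\Phi$ is the metric associated to $\Omega_\Phi$, differentiate the equation twice, and use concavity to absorb the bad third-order terms — this is the Chu–Tosatti–Weinkove style argument, adapted from the complex Monge–Ampère / Hessian equation setting to the dHYM setting. A subtlety is that the "metric" $g_\Phi$ here is not the obvious Kähler metric but the one adapted to the linearization of the dHYM operator (involving the endomorphism whose eigenvalues are the $\cot$ of the Lagrangian phase angles), so one must check that hypercritical phase keeps this uniformly positive and the operator uniformly elliptic on the region where the estimate is being run.

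The main obstacle I anticipate is precisely step (3), and within it the interaction between the boundary of $\Sigma$ and the second-order estimate: one needs the $C^{1,1}$ bound up to the boundary $X\times\partial\Sigma$, where $\phi^\epsilon$ equals the given smooth data $\phi_0,\phi_1$. Interior second-order estimates are one thing; the boundary estimate requires constructing suitable barriers near $X\times\partial\Sigma$ — typically one uses that the boundary data extend to subsolutions, exploits the product structure, and handles the double-normal derivative via the equation itself. The fact that the limiting equation is degenerate means the constant $C$ in step (3) must be shown not to blow up as $\epsilon\to0$; this forces the barriers and the test function to be chosen so that all $\epsilon$-dependence sits on the right-hand side in a controlled way. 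Once the uniform $C^{1,1}$ bound is in hand, passing to the limit gives a weak geodesic with bounded $\ddb$, hence $C^{1,1}$; and the sharpness claim (that one cannot do better than $C^{1,1}$) follows from the Darvas–Lempert examples \cite{DL12} transported to this setting, exactly as indicated in the abstract, so no further argument is needed there.
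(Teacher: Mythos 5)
Your overall strategy --- take the smooth $\epsilon$-geodesics, prove an a priori second-order bound uniform in $\epsilon$, and pass to the limit --- is the right one and matches the paper. However, the crucial step (your step (3)) is misaimed, and this constitutes a genuine gap.

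You propose to bound $|\partial\bar\partial\phi^\epsilon|_g$ using a test quantity of the form $\log(\mathrm{tr}_g g_\Phi) - A\Phi + \cdots$. There are two problems. First, the complex Hessian bound $|\partial\bar\partial\Phi^\epsilon|\leq C$ is \emph{already} part of the uniform estimates from \cite{CY18} (Theorem~\ref{thm: CY}), so this would prove nothing new. Second, and more importantly, the bound on $\partial\bar\partial\Phi$ does \emph{not} give $C^{1,1}$: one needs the full real Hessian $\nabla^X\nabla^X\Phi$, which is not controlled by the complex Hessian. This is precisely the content of Theorem~\ref{C11 estimate} in the paper, and it is the hard estimate. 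The test quantity must be something like $\log(\varphi_{\xi\xi})$ maximized over real spatial unit directions $\xi$, \emph{not} the trace $\mathrm{tr}_g g_\Phi$; the trace only sees the complex Laplacian. The \cite{CTW17}-style argument you cite indeed uses the largest eigenvalue of the real Hessian, and the paper's quantity $Q = \log(\varphi_{\xi\xi}) + f(|\nabla^X\varphi|^2)$ is the analogue here.

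A second, more structural, issue is your claim that hypercritical phase makes the operator concave. It does not. On the $(n+1)$-dimensional product the phase operator $\sum_{i=0}^{n}\arctan(\mu_i)$ is concave only when the phase exceeds $n\pi/2$; the hypercritical assumption $\hat\theta\in((n-1)\frac{\pi}{2},n\frac{\pi}{2})$ only places the lifted problem in the \emph{supercritical} but not concave range, as reflected in the hypothesis $h\in((n-1)\frac{\pi}{2}+\eta,(n+1)\frac{\pi}{2}-\eta)$ of~\eqref{SLPE}. The lack of concavity is exactly what makes the third-order terms dangerous, and handling them requires the structural facts in Lemma~\ref{Properties of Lagrangian} (from \cite{CY18}, e.g.\ $\sum_i \mu_i^{-1}<-\tan(\eta_1)$ when $\mu_n<0$) and the delicate absorption argument in Lemma~\ref{Third order terms}. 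Asserting concavity and concluding that ``the bad third-order terms are absorbed'' hides the real difficulty.

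Finally, two smaller points: the $\epsilon$-geodesic equation is not of the form $\mathrm{Re}(e^{-\sqrt{-1}\hat\theta}\Omega_\Phi^{n+1})=\epsilon\,\mathrm{Re}(\cdots)$ --- the regularization enters by replacing $\pi^*\omega$ with $\pi^*\omega+\epsilon^2 \sqrt{-1}\,dt\wedge d\bar t$, and the equation stays of the form ``$\mathrm{Im}=0$''; and the boundary second-order bound $\sup_{\partial\mathcal{X}}|D^2\Phi^\epsilon|\leq C$ you worry about is already established uniformly in $\epsilon$ in \cite{CY18}, so the estimate you need to supply is purely interior.
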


We demonstrate that this result is optimal, by using work of Darvas-Lempert \cite{DL12} to construct examples of points in $\mathcal{H}$ which cannot be joined by a $C^{2}$ geodesic. Recently, the first author, Tosatti and Weinkove \cite{CTW17} proved the $C^{1,1}$ regularity of geodesics in the space of K\"ahler metrics.

It is worth pointing out that there is a largely parallel, but much better developed, theory for the space of K\"ahler metrics equipped with the Donaldson-Mabuchi-Semmes Riemannian metric.  Indeed, if $(X,\omega)$ is a compact K\"ahler manifold and $\mathcal{H}_{PSH} := \{ \phi \in C^{\infty}(X,\mathbb{R}) : \omega + \mn\ddb \phi >0\}$ denotes the space of $\omega$-PSH functions, then the Donaldson-Mabuchi-Semmes Riemannian metric is given by
\[
\langle \psi_1, \psi_2\rangle_{\phi} = \int_{X}\psi_1\psi_2 \omega_{\phi}^n.
\]
With this metric $\mathcal{H}_{PSH}$ is a negatively curved infinite dimensional Riemannian manifold.  The properties of $\mathcal{H}_{PSH}$, as well as its completions (with respect to certain Finsler norms) have played an important role in the study of K\"ahler metrics with constant scalar curvature; we refer the reader to \cite{Chen2000, CalabiChen2002, Blocki2011, Darvas19, GZ, PSS}, and the references therein for an introduction to this circle of ideas.

The organization of this paper is as follows.  Section~\ref{Geodesics section} consists of background concerning the space $\mathcal{H}$ and its geodesics.  Section~\ref{sec: metric} establishes Theorem~\ref{Metric structure}.  In Section~\ref{Connection and curvature section} we introduce the Levi-Civita connection on $\mathcal{H}$, and prove that $\mathcal{H}$ has non-positive sectional curvature, establishing Theorem~\ref{Connection and curvature}.  In Section~\ref{sec: completion} we study the completion of $\mathcal{H}$ with respect to the Riemannian distance, and show that this space is a ${\rm CAT}(0)$ space.  Finally, in Section~\ref{C11 section} we prove that geodesics in the space $\mathcal{H}$ are in fact $C^{1,1}$, and using work of Darvas-Lempert \cite{DL12} we show that this result is sharp by constructing geodesics which are not $C^2$. 

{\bf Acknowledgements}: The authors thank Jake Solomon and Tam\'as Darvas for helpful comments.

\section{Background}\label{Geodesics section}

In this section we recall some of the basic properties of the infinite dimensional Riemannian manifold $\mathcal{H}$.  Given a smooth, real $(1,1)$ form $\alpha$, for any point $p\in X$ we can choose local holomorphic coordinates so that
\[
\omega(p) = \sum_{i} \sqrt{-1}dz_i\wedge d\bar{z}_i \qquad \alpha(p) = \sum_{i} \sqrt{-1}\lambda_i dz_i\wedge d\bar{z}_i 
\]
for $\lambda_i \in \mathbb{R}$, $1\leq i \leq n$.  More intrinsically, $\lambda_i$ are the eigenvalues of the hermitian endomorphism $\omega^{-1}\alpha$. We define the phase operator to be
\begin{equation}\label{eq: phaseOp}
\Theta_{\omega}(\alpha) := \sum_i \arctan(\lambda_i).
\end{equation}
It is straightforward to check that $\Theta_{\omega}(\alpha)$ is a smooth map from $X$ to $(-n\frac{\pi}{2}, n\frac{\pi}{2})$ and that the dHYM equation~\eqref{dHYM} is equivalent to
\[
\begin{aligned}
\Theta_{\omega}(\alpha_{\phi}) = \beta \qquad \text{ where } \beta = \hat{\theta} \mod 2\pi  \text{ is constant }
 \end{aligned}
 \]
and we recall that $\hat{\theta} \in [0, 2\pi)$ is the topological quantity defined by~\eqref{eq: topAngle}.  The space $\mathcal{H}$ defined in~\eqref{eq: defnH} can then be written as
\begin{equation}\label{eq: HdefBranch}
\mathcal{H} = \bigsqcup_{\{\beta \in (-n\frac{\pi}{2}, n \frac{\pi}{2}) : \beta = \hat{\theta} \mod 2\pi \}} \{ \phi \in C^{\infty}(X)~|~ |\Theta_{\omega}(\alpha_{\phi}) - \beta| < \frac{\pi}{2}\}
\end{equation}

An easy argument using the maximum principle shows that either $\mathcal{H}$ is empty, or the disjoint union on the right hand side of~\eqref{eq: HdefBranch} collapses to only one branch \cite{CXY}.  That is, there is a unique $\beta \in (-n\frac{\pi}{2}, n \frac{\pi}{2})$ such that $\beta = \hat{\theta} \mod 2\pi$ and
\[
\{ \phi \in C^{\infty}(X)~|~ |\Theta_{\omega}(\alpha_{\phi}) - \beta| < \frac{\pi}{2}\} \ne \emptyset
\]
In this situation we identify $\hat{\theta}$ with this uniquely defined {\em lifted phase} $\hat{\theta} \in (-n\frac{\pi}{2}, n \frac{\pi}{2})$.  
\begin{definition}
We say that $[\alpha]$ has {\em hypercritical phase} (with respect to $\omega$) if the lifted phase $\hat{\theta} \in ((n-1)\frac{\pi}{2}, n \frac{\pi}{2})$.
\end{definition}

Let $\mathcal{X}=X\times\mathcal{A}$, $\mathcal{A}=\{t\in\mathbb{C}~|~e^{-1}\leq |t| \leq 1\}$ and $\pi$ be the projection from $\mathcal{X}$ to $X$. We use $D, \ov{D}$ to denote the complex differential operators on $\mathcal{X}$ and $\de, \dbar$ to denote the operators on $X$. For a path $\phi$ in $\mathcal{H}$, define a function $\Phi$ on $\mathcal{X}$ by
\[
\Phi(x,t) = \phi(x,-\log|t|).
\]
As noted in \cite{CY18}, the path $\phi$ is a geodesic joining $\phi_{0}$ and $\phi_{1}$ if and only if $\Phi$ solves the following equation
\[
\begin{cases}
{\rm Im}\left(e^{-\sqrt{-1}\hat{\theta}}\left(\pi^{*}\omega
+\sqrt{-1}(\pi^{*}\alpha+\sqrt{-1}D\ov{D}\Phi)\right)^{n+1}\right) = 0, \\[2mm]
{\rm Re}\left(e^{-\sqrt{-1}\hat{\theta}}\left(\omega+\sqrt{-1}(\alpha+\ddbar\Phi)\right)^{n}\right) > 0, \\[2mm]
\Phi(\cdot,1) = \phi_{0}, \ \Phi(\cdot,e^{-1}) = \phi_{1},
\end{cases}
\]
on $\mathcal{X}$.
To study this degenerate elliptic equation, for any $\epsilon>0$, the second author and Yau \cite{CY18} introduced the $\epsilon$-geodesic equation
\begin{equation}\label{epsilon geodesic eqn}
\begin{cases}
{\rm Im}\left(e^{-\sqrt{-1}\hat{\theta}}\left(\pi^{*}\omega+\epsilon^{2}dt\wedge d\ov{t}
+\sqrt{-1}(\pi^{*}\alpha+\sqrt{-1}D\ov{D}\Phi^{\epsilon})\right)^{n+1}\right) = 0, \\[2mm]
{\rm Re}\left(e^{-\sqrt{-1}\hat{\theta}}\left(\omega+\sqrt{-1}(\alpha+\ddbar\Phi)\right)^{n}\right) > 0, \\[2mm]
\Phi^{\epsilon}(\cdot,t)\big|_{\{|t|=1\}} = \phi_{0}, \quad \Phi^{\epsilon}(\cdot,t)\big|_{\{|t|=e^{-1}\}} = \phi_{1}.
\end{cases}
\end{equation}
Introduce the K\"ahler metric $\hat{\omega}_{\epsilon} := \pi^{*}\omega+ \ve^2\sqrt{-1}dt\wedge d\bar{t}$ on $\mathcal{X}$.  Then it is straightforward to check that the $\epsilon$-geodesic equation is equivalent to the PDE
\[
\begin{aligned}
&\Theta_{\hat{\omega}_{\epsilon}} (\pi^{*}\alpha+\sqrt{-1}D\ov{D} \Phi^{\epsilon}) = \hat{\theta}\\
& \Phi^{\epsilon}(\cdot,t)\big|_{\{|t|=1\}} = \phi_{0}, \quad \Phi^{\epsilon}(\cdot,t)\big|_{\{|t|=e^{-1}\}} = \phi_{1},
\end{aligned}
\]
where $\Theta$ is the operator defined in~\eqref{eq: phaseOp}.  An application of the maximum principle shows that the solution $\Phi^{\epsilon}$ is $S^{1}$-invariant, i.e.,
\[
\Phi^{\epsilon}(x,t)  = \Phi^{\epsilon}(x,|t|).
\]
Define a path $\phi^{\epsilon}$ in $\mathcal{H}$ by
\[
\phi^{\epsilon}(x,-\log|t|)=\Phi^{\epsilon}(x,t).
\]
The path $\phi^{\epsilon}$ is said to be the $\epsilon$-geodesic joining $\phi_{0}, \phi_{1}$. In \cite{CY18}, the second author and Yau proved that the $\epsilon$-geodesic equation admits a unique, smooth smooth solution. More precisely, the following result was proved
\begin{theorem}[Collins-Yau, \cite{CY18}]\label{thm: CY}
For any $\phi_0, \phi_1 \in \mathcal{H}$ there exists a unique, $S^1$ invariant solution to~\eqref{epsilon geodesic eqn}, and the following estimate holds:  there is a constant $C$ depending on $\alpha, X, \omega, \phi_0, \phi_1$, but not $\epsilon$, such that
\begin{equation}\label{CY estimate 1}
\sup_{\mathcal{X}}\left(|\Phi^{\epsilon}|+|D\Phi^{\epsilon}|+|D\ov{D}\Phi^{\epsilon}|\right)
+\sup_{\de\mathcal{X}}|D^{2}\Phi^{\epsilon}| \leq C,
\end{equation}
or equivalently
\begin{equation}\label{CY estimate 2}
\sup_{X\times[0,1]}\left(|\phi^{\epsilon}|+|\phi_{t}^{\epsilon}|+|\nabla\phi^{\epsilon}|+|\phi_{tt}^{\epsilon}|+|\nabla\phi_{t}^{\epsilon}|
+|\de\dbar\phi^{\epsilon}|\right)
+\sup_{\de(X\times[0,1])}|\nabla^{2}\phi^{\epsilon}| \leq C.
\end{equation}
As $\epsilon \rightarrow 0$, the paths $\Phi^{\epsilon}$ (or equivalently $\phi^{\epsilon}$) converge to a $C^{1,\alpha}$ geodesic in $\mathcal{H}$.
\end{theorem}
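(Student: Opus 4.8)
\textbf{Set-up and existence for fixed $\epsilon$.} The plan is to regard the $\epsilon$-geodesic equation~\eqref{epsilon geodesic eqn} as a Dirichlet problem for a fully nonlinear second order elliptic PDE on the compact K\"ahler manifold with boundary $(\mathcal{X},\hat{\omega}_{\epsilon})$, to solve it for each fixed $\epsilon>0$ by the continuity method, and then to let $\epsilon\to0$, keeping careful track of which a priori estimates have constants independent of $\epsilon$. For $\epsilon>0$ the form $\hat{\omega}_{\epsilon}=\pi^{*}\omega+\epsilon^{2}\mn\, dt\wedge d\bar t$ is a genuine K\"ahler metric on $\mathcal{X}$, so once a $C^{2}$ bound is known the equation $\Theta_{\hat{\omega}_{\epsilon}}(\pi^{*}\alpha+\mn D\ov{D}\Phi^{\epsilon})=\hat{\theta}$ with the boundary data of~\eqref{epsilon geodesic eqn} is uniformly elliptic. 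Openness along a continuity path (deforming, say, the boundary data or the target phase toward a configuration admitting an explicit solution) follows from the implicit function theorem, since the linearization of $\Phi\mapsto\Theta_{\hat{\omega}_{\epsilon}}(\pi^{*}\alpha+\mn D\ov{D}\Phi)$ is a linear elliptic operator with vanishing zeroth order term, hence invertible on functions vanishing on $\de\mathcal{X}$. Closedness reduces, via Evans--Krylov and Schauder bootstrapping, to a priori $C^{0}$, $C^{1}$ and $C^{2}$ estimates; here one uses that the hypercritical phase hypothesis on $X$, namely $\hat{\theta}>(n-1)\frac{\pi}{2}$, becomes the \emph{supercritical} condition on the $(n+1)$-dimensional $\mathcal{X}$, which makes the level set $\{\Theta_{\hat{\omega}_{\epsilon}}=\hat{\theta}\}$ convex and hence, after a local reparametrization, the operator concave, as required for Evans--Krylov. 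Uniqueness for fixed $\epsilon$ is the comparison principle, and $S^{1}$-invariance follows by applying uniqueness to rotations of the $t$ variable.

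\textbf{The $\epsilon$-independent estimates.} The estimate~\eqref{CY estimate 1} is obtained in stages, the point being that each constant is uniform in $\epsilon$. The bound on $\sup_{\mathcal{X}}|\Phi^{\epsilon}|$ comes from the maximum principle together with suitably modified sub/super-solutions built from $\phi_{0},\phi_{1}$. The boundary gradient bound $\sup_{\de\mathcal{X}}|D\Phi^{\epsilon}|$ uses local barriers near $X\times\de\mathcal{A}$, the tangential part being determined by the boundary values; the interior gradient bound follows from the maximum principle applied to a quantity such as $|D\Phi^{\epsilon}|^{2}e^{A\Phi^{\epsilon}}$, the bad terms being absorbed using only the $C^{0}$ bound. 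The boundary estimate $\sup_{\de\mathcal{X}}|D^{2}\Phi^{\epsilon}|$ is the technical Caffarelli--Kohn--Nirenberg--Spruck type argument: tangential--tangential second derivatives are fixed by the boundary data, tangential--normal derivatives are controlled by barriers adapted to the linearized operator, and the double normal derivative is extracted from the equation itself using ellipticity.

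\textbf{The interior complex Hessian bound.} The heart of the matter is the bound $\sup_{\mathcal{X}}|D\ov{D}\Phi^{\epsilon}|\le C$ with $C$ independent of $\epsilon$. I would apply the maximum principle to a test function of the shape $\log\lambda_{\max}(D\ov{D}\Phi^{\epsilon})+A|D\Phi^{\epsilon}|^{2}-B\Phi^{\epsilon}$ (or a smoothed variant using $\log$ of the trace of a large power of the complex Hessian, since $\lambda_{\max}$ is not smooth). Differentiating the equation twice and invoking the (super)critical concavity of $\Theta_{\hat{\omega}_{\epsilon}}$ to control the third order terms, one should find that for $A,B$ large --- depending only on $X,\omega,\alpha$ and the $C^{0},C^{1}$ bounds already in hand --- the maximum of $\lambda_{\max}(D\ov{D}\Phi^{\epsilon})$ is attained on $\de\mathcal{X}$, where it is controlled by the previous step. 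Together with the equation, which given a complex Hessian bound controls the remaining real second derivatives on $\de\mathcal{X}$, this yields~\eqref{CY estimate 1}, equivalently~\eqref{CY estimate 2}. \emph{This step, together with the boundary $C^{2}$ estimate, is where I expect the main difficulty to lie}: the auxiliary function and the use of the supercritical structure must be arranged so carefully that no constant secretly depends on $\epsilon$.

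\textbf{Passage to the limit.} For fixed $\epsilon$, Evans--Krylov gives $\Phi^{\epsilon}\in C^{2,\gamma}(\mathcal{X})$ and Schauder bootstrapping improves this to $C^{\infty}$, with constants that necessarily blow up as $\epsilon\to0$. However~\eqref{CY estimate 2} bounds $\phi^{\epsilon}$ uniformly in a space that embeds compactly into $C^{1,\alpha}(X\times[0,1])$ for every $\alpha\in(0,1)$, so along a subsequence $\phi^{\epsilon_{j}}\to\phi$ in $C^{1,\alpha}_{\mathrm{loc}}$. Since $\Theta$ is monotone, the $\epsilon$-equations may be read as families of differential inequalities for the degenerate geodesic equation, and a stability argument --- either in the viscosity sense or through the pluripotential reformulation of~\eqref{eq: introGeoEq} --- shows that the limit $\Phi$ solves the degenerate equation weakly with boundary values $\phi_{0},\phi_{1}$. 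Hence $\phi$ is the $C^{1,\alpha}$ geodesic in $\mathcal{H}$ asserted by the theorem, and~\eqref{CY estimate 2} passes to the limit.
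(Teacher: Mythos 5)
The theorem you are addressing is not proved in this paper --- it is stated as Theorem~\ref{thm: CY} and attributed verbatim to \cite{CY18}, so there is no ``paper's own proof'' against which to compare your proposal. What I can do is assess whether your outline is a plausible account of the argument in \cite{CY18}, and here it is largely on the right track but with a significant soft spot.

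Your reduction of the problem to a Dirichlet problem for the specified Lagrangian phase operator on $(\mathcal{X},\hat{\omega}_{\epsilon})$, the observation that hypercritical phase on $X$ becomes supercritical on the $(n+1)$-dimensional total space $\mathcal{X}$, the continuity method for fixed $\epsilon$, uniqueness via comparison, and $S^1$-invariance via uniqueness under rotation of $t$ --- all of these are correct and do match the structure of \cite{CY18}. Likewise the staging of the a priori estimates ($C^0$, $C^1$, boundary $C^2$, interior complex Hessian) is the right order of operations.

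The gap is in the step you yourself flag as the heart of the matter: the $\epsilon$-uniformity of the interior complex Hessian bound. The metric $\hat{\omega}_{\epsilon}=\pi^*\omega+\epsilon^2\mn\,dt\wedge d\bar t$ degenerates along the $t$-direction as $\epsilon\to 0$, and a naive maximum principle applied to $\log\lambda_{\max}(D\ov{D}\Phi^{\epsilon})+A|D\Phi^{\epsilon}|^2-B\Phi^{\epsilon}$ will produce, via differentiating the equation, terms involving $\hat g^{t\bar t}_{\epsilon}=\epsilon^{-2}$ and Christoffel symbols of $\hat\omega_\epsilon$; these are exactly the terms that, left unexamined, would ruin the uniformity. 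Your sketch asserts the constants come out independent of $\epsilon$ without saying how the $\epsilon^{-2}$ degeneration is neutralized. In \cite{CY18}, and in the parallel treatment in Section 5 of this very paper, this is handled by rescaling $t\mapsto \epsilon t$ so that the background metric becomes $\hat\omega=\pi^*\omega+\mn\,dt\wedge d\bar t$, and then tracking carefully which derivatives are uniform in $\epsilon$ (spatial and mixed spatial derivatives are $O(1)$) and which scale with negative powers of $\epsilon$ (pure $t$ derivatives scale like $\epsilon^{-1}$ and $\epsilon^{-2}$); it is only after translating back that the full $D\ov D\Phi^\epsilon$ bound is uniform. Also worth noting: the hypercritical phase hypothesis, although suppressed in the displayed statement of Theorem~\ref{thm: CY}, is a standing assumption you are right to use, and it is essential --- the supercriticality on $\mathcal{X}$ gives the convexity of the level set (not global concavity of $\Theta_{\hat\omega_\epsilon}$, which would only hold in the hypercritical range on $\mathcal{X}$), and this is what makes Evans--Krylov applicable after a suitable reparametrization.

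A minor further caution: you write that passage to the limit ``shows that the limit $\Phi$ solves the degenerate equation weakly,'' and gesture at viscosity or pluripotential solutions. The statement of Theorem~\ref{thm: CY} only asserts that the limit is a $C^{1,\alpha}$ geodesic, and a precise definition of weak geodesic is needed to make that meaningful; \cite{CY18} handles this via the monotonicity of the phase operator and stability, so your instinct is right, but as written it is a placeholder rather than an argument.

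In summary: this is a reasonable reconstruction of the $\epsilon$-geodesic program of \cite{CY18}, and it correctly identifies the supercritical structure on $\mathcal{X}$ as the essential ingredient. The proof as written does not, however, supply the mechanism --- rescaling in $t$ and the attendant anisotropic estimates --- that makes the second-order bound genuinely $\epsilon$-independent, and that is not a detail one can wave at: it is the entire content of the estimate.
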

For later use, we derive the $\epsilon$-geodesic equation for $\phi^{\epsilon}$.

\begin{lemma}\label{Eqn for phi epsilon}
The $\epsilon$-geodesic $\phi^{\epsilon}$ satisfies the following equation:
\[
\begin{aligned}
&\ddot{\phi}^{\epsilon} {\rm Re}\left(e^{-\sqrt{-1}\hat{\theta}}(\omega + \sqrt{-1}(\alpha + \sqrt{-1}\ddb \phi^{\epsilon}))^n\right)\\
&+ n\sqrt{-1}\de \dot{\phi}^{\epsilon}\wedge \dbar \dot{\phi}^{\epsilon} \wedge {\rm Im}\left(e^{-\sqrt{-1}\hat{\theta}}(\omega + \sqrt{-1}(\alpha + \sqrt{-1}\ddb \phi^{\epsilon}))^{n-1}\right)\\
& = -4e^{-2t}\epsilon^2{\rm Im}\left(e^{-\sqrt{-1}\hat{\theta}} (\omega+ \sqrt{-1}(\alpha + \sqrt{-1}\ddb \phi^{\epsilon}))^{n}\right).
\end{aligned}
\]
\end{lemma}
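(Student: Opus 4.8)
The plan is to substitute the $S^1$-invariant ansatz into the imaginary part of the $\epsilon$-geodesic equation~\eqref{epsilon geodesic eqn} on $\mathcal X$, expand the resulting $(n+1)$-st wedge power, and read off the fibrewise identity on $X$. Throughout I keep $t$ for the complex coordinate on the annulus $\mathcal A$, write $\Omega_{\phi^\epsilon}:=\omega+\sqrt{-1}(\alpha+\sqrt{-1}\ddb\phi^\epsilon)$ for brevity, and recall that the real parameter appearing in the statement is $-\log|t|$, with dots denoting differentiation in it. Since $\Phi^\epsilon$ is $S^1$-invariant we have $\Phi^\epsilon(x,t)=\phi^\epsilon(x,-\log|t|)$, and from $-\log|t|=-\tfrac12\log(t\ov t)$ the chain rule gives $\de_t(-\log|t|)=-\tfrac1{2t}$ and $\de_{\ov t}(-\log|t|)=-\tfrac1{2\ov t}$.

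First I would compute $D\ov D\Phi^\epsilon$ in holomorphic coordinates $(z^1,\dots,z^n,t)$ on $\mathcal X$. Because $\Phi^\epsilon$ depends on $(z,\ov z)$ only through $\phi^\epsilon(\cdot,-\log|t|)$, applying the chain rule to the second derivatives yields
\[
D\ov D\Phi^\epsilon=\ddb\phi^\epsilon-\tfrac1{2\ov t}\,\de\dot\phi^\epsilon\wedge d\ov t-\tfrac1{2t}\,dt\wedge\dbar\dot\phi^\epsilon+\tfrac{\ddot\phi^\epsilon}{4|t|^2}\,dt\wedge d\ov t,
\]
where the $X$-pieces are pulled back by $\pi$. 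Writing~\eqref{epsilon geodesic eqn} in the equivalent form $\Im\big(e^{-\sqrt{-1}\hat\theta}(\hat\omega_\epsilon+\sqrt{-1}(\pi^*\alpha+\sqrt{-1}D\ov D\Phi^\epsilon))^{n+1}\big)=0$ and using $\sqrt{-1}\cdot\sqrt{-1}=-1$, the $(1,1)$ form inside splits as $\pi^*\Omega_{\phi^\epsilon}+\mu$, where
\[
\mu=\tfrac1{2\ov t}\,\de\dot\phi^\epsilon\wedge d\ov t+\tfrac1{2t}\,dt\wedge\dbar\dot\phi^\epsilon+\Big(\sqrt{-1}\epsilon^2-\tfrac{\ddot\phi^\epsilon}{4|t|^2}\Big)dt\wedge d\ov t
\]
collects precisely the terms containing $dt$ or $d\ov t$.

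Next I would expand $(\pi^*\Omega_{\phi^\epsilon}+\mu)^{n+1}$ on the $(n{+}1)$-dimensional manifold $\mathcal X$. Here $(\pi^*\Omega_{\phi^\epsilon})^{n+1}=\pi^*(\Omega_{\phi^\epsilon}^{n+1})=0$, being pulled back from the $n$-dimensional $X$, and $\mu^{\wedge 3}=0$ because $\mu$ involves only the two one-forms $dt,d\ov t$; so only $(n+1)\pi^*(\Omega_{\phi^\epsilon}^n)\wedge\mu$ and $\tfrac{n(n+1)}{2}\pi^*(\Omega_{\phi^\epsilon}^{n-1})\wedge\mu^{\wedge 2}$ survive. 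In the first, $\Omega_{\phi^\epsilon}^n\wedge\de\dot\phi^\epsilon=\Omega_{\phi^\epsilon}^n\wedge\dbar\dot\phi^\epsilon=0$ on $X$ for bidegree reasons, so only the $dt\wedge d\ov t$ part of $\mu$ contributes; in the second, of all products in $\mu^{\wedge 2}$ only the cross-term of the two mixed blocks is nonzero, and after reordering $\mu^{\wedge 2}=-\tfrac1{2|t|^2}\de\dot\phi^\epsilon\wedge\dbar\dot\phi^\epsilon\wedge dt\wedge d\ov t$. Collecting,
\[
\big(\pi^*\Omega_{\phi^\epsilon}+\mu\big)^{n+1}=\Big[(n+1)\big(\sqrt{-1}\epsilon^2-\tfrac{\ddot\phi^\epsilon}{4|t|^2}\big)\Omega_{\phi^\epsilon}^n-\tfrac{n(n+1)}{4|t|^2}\,\de\dot\phi^\epsilon\wedge\dbar\dot\phi^\epsilon\wedge\Omega_{\phi^\epsilon}^{n-1}\Big]\wedge dt\wedge d\ov t,
\]
pullbacks understood.

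Finally I would apply $\Im(e^{-\sqrt{-1}\hat\theta}\,\cdot\,)$ and invoke~\eqref{epsilon geodesic eqn}. Since $\sqrt{-1}\,dt\wedge d\ov t$ is a real two-form it can be divided out, and using the elementary identities $\Im(-\sqrt{-1}z)=-\Re(z)$ and $\Re(\sqrt{-1}z)=-\Im(z)$, together with the fact that $\sqrt{-1}\,\de\dot\phi^\epsilon\wedge\dbar\dot\phi^\epsilon$ is real (which lets one rewrite $\Re\big(e^{-\sqrt{-1}\hat\theta}\,\de\dot\phi^\epsilon\wedge\dbar\dot\phi^\epsilon\wedge\Omega_{\phi^\epsilon}^{n-1}\big)$ as $\sqrt{-1}\,\de\dot\phi^\epsilon\wedge\dbar\dot\phi^\epsilon\wedge\Im\big(e^{-\sqrt{-1}\hat\theta}\Omega_{\phi^\epsilon}^{n-1}\big)$), the vanishing of the imaginary part reduces — after multiplying by $-4|t|^2/(n+1)$ — to
\[
\ddot\phi^\epsilon\,\Re\big(e^{-\sqrt{-1}\hat\theta}\Omega_{\phi^\epsilon}^n\big)+n\sqrt{-1}\,\de\dot\phi^\epsilon\wedge\dbar\dot\phi^\epsilon\wedge\Im\big(e^{-\sqrt{-1}\hat\theta}\Omega_{\phi^\epsilon}^{n-1}\big)=-4|t|^2\epsilon^2\,\Im\big(e^{-\sqrt{-1}\hat\theta}\Omega_{\phi^\epsilon}^n\big).
\]
Since the real parameter is $-\log|t|$ we have $|t|^2=e^{-2t}$ in the notation of the statement, which is the claimed identity. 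The one genuinely delicate point is the bookkeeping — the chain-rule factors $-\tfrac1{2t},-\tfrac1{2\ov t}$, the sign produced by reordering $d\ov t\wedge dt$ inside $\mu^{\wedge 2}$, and the numerous factors of $\sqrt{-1}$ converting real and imaginary parts — all routine but error-prone, hence best carried out line by line; as a consistency check, setting $\epsilon=0$ returns the geodesic equation~\eqref{eq: introGeoEq}.
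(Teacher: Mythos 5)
Your proposal is correct and follows essentially the same route as the paper's proof: both substitute the $S^1$-invariant ansatz, compute the chain-rule factors $\de_t\Phi^\epsilon=-\tfrac1{2t}\dot\phi^\epsilon$, $\de_t\de_{\ov t}\Phi^\epsilon=\tfrac1{4|t|^2}\ddot\phi^\epsilon$, and expand the $(n{+}1)$-st wedge power by tracking the $dt,d\ov t$ degree, then read off the imaginary part. The only cosmetic difference is that you bundle all $dt,d\ov t$-containing pieces (including $\epsilon^2\sqrt{-1}\,dt\wedge d\ov t$) into a single form $\mu$ before applying the binomial theorem, whereas the paper first peels off the $\epsilon^2$ term and then expands the rest; the bookkeeping is equivalent.
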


\begin{proof}
Recall the $\epsilon$-geodesic equation (\ref{epsilon geodesic eqn}) is given by
\[
{\rm Im}\left(e^{-\sqrt{-1}\hat{\theta}}(\pi^{*}\omega +\epsilon^2\sqrt{-1}dt\wedge d\bar{t} + \sqrt{-1}(\pi^{*}\alpha + \sqrt{-1}D\ov{D} \Phi^{\epsilon}))^{n+1}\right)=0.
\]
Expanding this equation gives
\begin{equation}\label{Eqn for phi epsilon eqn 1}
\begin{aligned}
&{\rm Im}\left(e^{-\sqrt{-1}\hat{\theta}}(\pi^{*}\omega + \sqrt{-1}(\pi^{*}\alpha + \sqrt{-1}D\ov{D} \Phi^{\epsilon}))^{n+1}\right)\\
&+ \epsilon^2{\rm Im}\left(e^{-\sqrt{-1}\hat{\theta}} (n+1)\sqrt{-1}dt\wedge d\bar{t} \wedge (\pi^{*}\omega+ \sqrt{-1}(\pi^{*}\alpha + \sqrt{-1}D\ov{D} \Phi^{\epsilon}))^{n}\right) = 0.
\end{aligned}
\end{equation}
At the same time, by counting the number of $dt, d\bar{t}$ components the term on the second line is equal to
\[
\epsilon^2{\rm Im}\left(e^{-\sqrt{-1}\hat{\theta}} (n+1)\sqrt{-1}dt\wedge d\bar{t} \wedge (\pi^{*}\omega+ \sqrt{-1}(\pi^{*}\alpha +\ddb \Phi^{\epsilon}))^{n}\right).
\]

On the other hand, if we set $s= -\log|t|$ and denote
\[
\dot{\phi^{\epsilon}} = \frac{\de \phi^{\epsilon}}{\de s}, \quad \ddot{\phi^{\epsilon}} = \frac{\de^{2} \phi^{\epsilon}}{\de s^{2}},
\]
then we have
\[
\de_{t}\Phi^{\epsilon} = -\frac{1}{2t}\dot{\phi}^{\epsilon}, \
\de_{\ov{t}}\Phi^{\epsilon} = -\frac{1}{2\ov{t}}\dot{\phi}^{\epsilon}, \
\de_{t}\de_{\ov{t}}\Phi^{\epsilon} = \frac{1}{4|t|^{2}}\dot{\phi}^{\epsilon},  \
\de\dbar\Phi^{\epsilon} = \de\dbar\phi^{\epsilon}.
\]
It then follows that
\[
\begin{aligned}
&{\rm Im}\left(e^{-\sqrt{-1}\hat{\theta}}(\pi^{*}\omega + \sqrt{-1}(\pi^{*}\alpha + \sqrt{-1}D\ov{D} \Phi^{\epsilon}))^{n+1}\right)\\
&=  (n+1)\sqrt{-1}\frac{dt \wedge d\bar{t}}{4|t|^2} \wedge \ddot{\phi}^{\epsilon} {\rm Re}\left(e^{-\sqrt{-1}\hat{\theta}}(\omega + \sqrt{-1}(\alpha + \sqrt{-1}\ddb \phi^{\epsilon}))^n\right)\\
&+ (n+1)\sqrt{-1}\frac{dt \wedge d\bar{t}}{4|t|^2} \wedge n\sqrt{-1}\de \dot{\phi}^{\epsilon}\wedge \dbar \dot{\phi}^{\epsilon} \wedge {\rm Im}\left(e^{-\sqrt{-1}\hat{\theta}}(\omega + \sqrt{-1}(\alpha + \sqrt{-1}\ddb \phi^{\epsilon}))^{n-1}\right).
\end{aligned}
\]
Combining this with (\ref{Eqn for phi epsilon eqn 1}) and $|t|=e^{-s}$, we obtain
\[
\begin{aligned}
&\ddot{\phi}^{\epsilon} {\rm Re}\left(e^{-\sqrt{-1}\hat{\theta}}(\pi^{*}\omega + \sqrt{-1}(\pi^{*}\alpha + \sqrt{-1}\ddb \phi^{\epsilon}))^n\right)\\
&+ n\sqrt{-1}\de \dot{\phi}^{\epsilon}\wedge \dbar \dot{\phi}^{\epsilon} \wedge {\rm Im}\left(e^{-\sqrt{-1}\hat{\theta}}(\pi^{*}\omega + \sqrt{-1}(\pi^{*}\alpha + \sqrt{-1}\ddb \phi^{\epsilon}))^{n-1}\right)\\
& = -4e^{-2s}\epsilon^2{\rm Im}\left(e^{-\sqrt{-1}\hat{\theta}} (\pi^{*}\omega+ \sqrt{-1}(\pi^{*}\alpha + \sqrt{-1}\ddb \phi^{\epsilon}))^{n}\right).
\end{aligned}
\]
Restricting this equation on $X$ and replacing $s$ by $t$, we obtain the lemma.
\end{proof}

Before proceeding we make the following definition, whose only purpose is to ease notation, and shorten some otherwise lengthy formulae.
\begin{definition}\label{defn: notation}
Given $\phi \in C^{\infty}(X)$, we set
\[
\Omega_{\phi} := \omega + \sqrt{-1}\alpha_{\phi}.
\]
\end{definition}

\section{Metric structure of $\mathcal{H}$}\label{sec: metric}

\subsection{Some estimates for $\epsilon$-geodesic}
For any $\phi_0,\phi_1\in\mathcal{H}$, let $\phi^{\epsilon}$ be the $\epsilon$-geodesic joining $\phi_0, \phi_1$. For $t\in[0,1]$, define $E^{\epsilon}(t)$ by
\[
E^{\epsilon}(t) := \int_{X}(\dot{\phi}^{\epsilon})^{2}{\rm Re}\left(e^{-\sqrt{-1}\hat{\theta}}(\omega + \sqrt{-1}(\alpha + \sqrt{-1}\ddb \phi^{\epsilon}))^n\right).
\]
We have the following estimates for $\phi^{\epsilon}$ and $E^{\epsilon}$.
\begin{lemma}\label{Almost time convexity}
There exists a constant $C$, depending only on $\phi_0$, $\phi_1$, $\alpha$ and $(X,\omega)$ such that
\[
\ddot{\phi}^{\epsilon} \geq -C\epsilon^2.
\]
\end{lemma}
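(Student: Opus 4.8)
The plan is to use the equation for the $\epsilon$-geodesic derived in Lemma~\ref{Eqn for phi epsilon} together with the uniform estimates \eqref{CY estimate 2} from Theorem~\ref{thm: CY}. Rewriting that equation, we have
\[
\ddot{\phi}^{\epsilon}\,{\rm Re}\left(e^{-\sqrt{-1}\hat\theta}\Omega_{\phi^{\epsilon}}^n\right)
= -\,n\sqrt{-1}\,\de\dot\phi^{\epsilon}\wedge\dbar\dot\phi^{\epsilon}\wedge{\rm Im}\left(e^{-\sqrt{-1}\hat\theta}\Omega_{\phi^{\epsilon}}^{n-1}\right)
\;-\;4e^{-2t}\epsilon^2\,{\rm Im}\left(e^{-\sqrt{-1}\hat\theta}\Omega_{\phi^{\epsilon}}^{n}\right).
\]
First I would observe that, since $[\alpha]$ has hypercritical phase, $\phi^{\epsilon}\in\mathcal H$ and hence ${\rm Re}\left(e^{-\sqrt{-1}\hat\theta}\Omega_{\phi^{\epsilon}}^n\right)>0$ pointwise; by the uniform $C^2$-in-space, $C^0$-in-$\partial\bar\partial$ bounds in \eqref{CY estimate 2} this quantity is in fact bounded \emph{below} by a positive constant $c_0>0$ independent of $\epsilon$ (the eigenvalues of $\omega^{-1}\alpha_{\phi^{\epsilon}}$ are uniformly bounded, so the phase stays uniformly inside $((n-1)\tfrac\pi2,n\tfrac\pi2)$ and $\prod\cos\theta_i\cdot\sec(\cdot)$-type expressions are bounded below — this is exactly where hypercritical phase is used). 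Dividing the displayed identity by this positive quantity isolates $\ddot\phi^{\epsilon}$.

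Next I would bound the two terms on the right. For the second term, $|4e^{-2t}\epsilon^2\,{\rm Im}(e^{-\sqrt{-1}\hat\theta}\Omega_{\phi^{\epsilon}}^n)|\le C\epsilon^2$ using only the uniform bound on $\de\dbar\phi^{\epsilon}$ (and $e^{-2t}\le 1$). The crucial point is the first term: here hypercritical phase again enters, because the form ${\rm Im}\left(e^{-\sqrt{-1}\hat\theta}\Omega_{\phi^{\epsilon}}^{n-1}\right)$ is, up to a uniformly bounded positive factor, a \emph{positive} $(n-1,n-1)$ form — this is the standard fact (see \cite{CY18, CXY}) that in the hypercritical regime $\sqrt{-1}\partial u\wedge\bar\partial u\wedge{\rm Im}(e^{-\sqrt{-1}\hat\theta}\Omega_{\phi^{\epsilon}}^{n-1})\ge 0$ for any real function $u$. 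Consequently $n\sqrt{-1}\,\de\dot\phi^{\epsilon}\wedge\dbar\dot\phi^{\epsilon}\wedge{\rm Im}(e^{-\sqrt{-1}\hat\theta}\Omega_{\phi^{\epsilon}}^{n-1})\ge 0$ as a measure, so after moving it to the right-hand side with its minus sign it only helps: it gives $\ddot\phi^{\epsilon}\,{\rm Re}(\cdots)\ge -C\epsilon^2$, and dividing by ${\rm Re}(\cdots)\ge c_0>0$ yields $\ddot\phi^{\epsilon}\ge -C\epsilon^2$ with $C$ depending only on $\phi_0,\phi_1,\alpha,(X,\omega)$.

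The main obstacle — really the only nontrivial input — is establishing the uniform lower bound ${\rm Re}\left(e^{-\sqrt{-1}\hat\theta}\Omega_{\phi^{\epsilon}}^n\right)\ge c_0>0$ and the positivity of $\sqrt{-1}\de u\wedge\dbar u\wedge{\rm Im}(e^{-\sqrt{-1}\hat\theta}\Omega_{\phi^{\epsilon}}^{n-1})$; both follow from the hypercritical phase hypothesis together with the $\epsilon$-independent bound on $\|\de\dbar\phi^{\epsilon}\|_{C^0}$ in \eqref{CY estimate 2}. Concretely, diagonalizing $\omega$ and $\alpha_{\phi^{\epsilon}}$ simultaneously at a point with eigenvalues $\lambda_i$, one has ${\rm Re}(e^{-\sqrt{-1}\hat\theta}\Omega_{\phi^{\epsilon}}^n)=\bigl(\prod_i\sqrt{1+\lambda_i^2}\bigr)\cos\bigl(\Theta_\omega(\alpha_{\phi^{\epsilon}})-\hat\theta\bigr)\,\omega^n/n!$ and ${\rm Im}(e^{-\sqrt{-1}\hat\theta}\Omega_{\phi^{\epsilon}}^{n-1})$ has the analogous expression; since $|\lambda_i|\le C$ uniformly and, by \eqref{eq: HdefBranch}, $|\Theta_\omega(\alpha_{\phi^{\epsilon}})-\hat\theta|<\pi/2$ with $\hat\theta>(n-1)\pi/2$ forcing all $\lambda_i>0$ on a uniform scale, both quantities have the required sign and size. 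Once these two facts are in hand the rest is the short algebraic manipulation above. I would present the argument in this order: (i) recall Lemma~\ref{Eqn for phi epsilon}; (ii) use hypercritical phase plus \eqref{CY estimate 2} to get ${\rm Re}(e^{-\sqrt{-1}\hat\theta}\Omega_{\phi^{\epsilon}}^n)\ge c_0$ and the positivity of the $\de\dot\phi^{\epsilon}\wedge\dbar\dot\phi^{\epsilon}$ term; (iii) combine to conclude $\ddot\phi^{\epsilon}\ge -C\epsilon^2$.
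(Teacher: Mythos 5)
Your approach diverges from the paper's and contains a genuine error. The paper does not manipulate the expanded $\epsilon$-geodesic equation for $\phi^\epsilon$ at all; instead it works directly with the lifted equation $\Theta_{\hat\omega_\epsilon}(\pi^*\alpha+\sqrt{-1}D\ov{D}\Phi^\epsilon)=\hat\theta$ on $\mathcal{X}$ and applies the Schur--Horn theorem (eigenvalue majorization, \cite{Horn54} together with \cite[Lemma 3.1(7)]{CY18}) to obtain
\[
\arctan\left(\frac{\Phi^{\epsilon}_{t\bar t}}{\epsilon^2}\right)+\sum_{i=1}^n\arctan(\lambda_i)\ \geq\ \hat\theta,
\]
from which, using the uniform bound $|\lambda_i|\le C$ and $\hat\theta>(n-1)\tfrac\pi2$, one gets $\arctan(\Phi^\epsilon_{t\bar t}/\epsilon^2)>-\tfrac\pi2$ uniformly and then $\Phi^\epsilon_{t\bar t}\ge -C\epsilon^2$ by taking tangents. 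The hypercritical hypothesis enters only through this arctan lower bound.

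The step in your proposal that fails is the claim that $\sqrt{-1}\,\de u\wedge\dbar u\wedge{\rm Im}\bigl(e^{-\sqrt{-1}\hat\theta}\Omega_{\phi^\epsilon}^{n-1}\bigr)\ge 0$ under hypercritical phase. This is not true. Diagonalizing $\omega$ and $\alpha_{\phi^\epsilon}$ simultaneously with eigenvalues $\lambda_i$, one finds
\[
{\rm Im}\left(e^{-\sqrt{-1}\hat\theta}\Omega_{\phi^\epsilon}^{n-1}\right)=(n-1)!\,r\sum_{i=1}^n\frac{\sin(\Theta-\hat\theta)-\lambda_i\cos(\Theta-\hat\theta)}{1+\lambda_i^2}\,\widehat{(i)},
\]
where $r=\prod_j\sqrt{1+\lambda_j^2}$ and $\Theta=\sum_j\arctan\lambda_j$, so the coefficient of $\widehat{(i)}$ has the sign of $\tan(\Theta-\hat\theta)-\lambda_i$. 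Under hypercritical phase all $\lambda_i>0$, while $\Theta-\hat\theta$ ranges over $(-\tfrac\pi2,\tfrac\pi2)$; in particular at a dHYM solution $\Theta=\hat\theta$ and every coefficient is $-\lambda_i r/(1+\lambda_i^2)<0$, so the $(n-1,n-1)$ form is strictly \emph{negative}. The correct positivity statement (used implicitly throughout the paper, e.g.\ in Section~\ref{sec: completion}) is for the \emph{real} part: $\sqrt{-1}\,\de u\wedge\dbar u\wedge{\rm Re}\bigl(e^{-\sqrt{-1}\hat\theta}\Omega^{n-1}\bigr)\ge 0$, which encodes ellipticity of the linearized operator; it does not give you a sign on the term you need. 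Because of this the middle term $n\sqrt{-1}\,\de\dot\phi^\epsilon\wedge\dbar\dot\phi^\epsilon\wedge{\rm Im}(\cdots)$ cannot simply be dropped, and your estimate does not close. The remaining ingredients of your sketch (uniform positive lower bound on ${\rm Re}(e^{-\sqrt{-1}\hat\theta}\Omega_{\phi^\epsilon}^n)$ from the $C^0$-bound on $\de\dbar\phi^\epsilon$ and hypercritical phase, and the $O(\epsilon^2)$ bound on the right-hand side) are fine, but they are not enough without controlling the $\de\dot\phi^\epsilon\wedge\dbar\dot\phi^\epsilon$ term; the Schur--Horn route in the paper is what sidesteps this difficulty.
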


\begin{proof}
Recalling the relationship between $\phi^{\epsilon}$ and $\Phi^{\epsilon}$, it suffices to prove
\[
\de_{t}\de_{\ov{t}}\Phi^{\epsilon} \geq -C\epsilon^2.
\]
Let $\mu_{0},\cdots,\mu_{n}$ be the eigenvalues of $\pi^{*}\alpha+\sqrt{-1}D\ov{D}\Phi^{\epsilon}$ with respect to $\pi^*\omega + \epsilon^2\sqrt{-1}dt\wedge d\bar{t}$. Then the $\epsilon$-geodesic equation (\ref{epsilon geodesic eqn}) implies
\[
\sum_{i=0}^{n} \arctan (\mu_i) = \hat{\theta}.
\]
Let $\lambda_{1},\cdots,\lambda_{n}$ be the eigenvalues of $\alpha+\ddbar\Phi^{\epsilon}$ with respect to $\omega$. By the Schur-Horn theorem (see \cite{Horn54}) and \cite[Lemma 3.1 (7)]{CY18}, we have
\[
\arctan\left(\frac{\Phi^{\epsilon}_{t\bar{t}}}{\epsilon^2}\right) +\sum_{i=1}^{n}\arctan(\lambda_{i}) \geq \sum_{i=0}^{n} \arctan (\mu_i) = \hat{\theta}.
\]
Thanks to estimate (\ref{CY estimate 1}),
\[
|\lambda_{i}| \leq C, \quad \text{for $1\leq i \leq n$}.
\]
Thus,
\[
\arctan\left(\frac{\Phi^{\epsilon}_{t\bar{t}}}{\epsilon^2}\right)  > \hat{\theta} - n \arctan (C) > -\frac{\pi}{2},
\]
where we used that $\hat{\theta} \in ((n-1)\frac{\pi}{2}, n \frac{\pi}{2})$.  We can therefore apply tangent to both sides to obtain
\[
\Phi^{\epsilon}_{t\bar{t}} \geq -C\epsilon^2,
\]
as desired.
\end{proof}

\begin{lemma}\label{Estimates for E}
Let $\phi^{\epsilon}(t)$ be an $\epsilon$-geodesic between $\phi_0, \phi_1 \in \mathcal{H}$.  There exists a constant $C$, depending only on $\phi_0$, $\phi_1$, $\alpha$ and $(X,\omega)$ such that
\begin{enumerate}
\item[(i)] $\displaystyle\left|\frac{dE^{\epsilon}}{dt}\right|\leq C\epsilon^{2}$.
\item[(ii)] $E^{\epsilon}(t)$ has the following lower bound:
\[
\begin{split}
E^{\epsilon}(t) \geq {} &
\max \left\{
\int_{\{\phi_{0}>\phi_{1}\}}(\phi_{0}-\phi_{1})^{2}{\rm Re}\left(e^{-\sqrt{-1}\hat{\theta}}(\omega+\sqrt{-1}\alpha_{\phi_{0}})^{n}\right),\right. \\
& \left.\int_{\{\phi_{1}>\phi_{0}\}}(\phi_{1}-\phi_{0})^{2}{\rm Re}\left(e^{-\sqrt{-1}\hat\theta}(\omega + \sqrt{-1}\a_{\phi_1})^{n}\right)\right\}-C\epsilon^{2}.
\end{split}
\]
\end{enumerate}
\end{lemma}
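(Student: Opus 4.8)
The plan is to differentiate $E^{\epsilon}(t)$ in $t$ and use the equation for $\phi^{\epsilon}$ from Lemma~\ref{Eqn for phi epsilon} together with the estimates \eqref{CY estimate 2} to prove (i), and then to integrate (i) from a boundary time, combined with a convexity/lower-semicontinuity argument at $t=0$ and $t=1$, to prove (ii). For part (i), I would compute
\[
\frac{dE^{\epsilon}}{dt} = \int_{X} 2\dot{\phi}^{\epsilon}\ddot{\phi}^{\epsilon}\,\Ren + \int_{X}(\dot{\phi}^{\epsilon})^{2}\,\ddt\!\left(\Ren\right),
\]
where $\Ren$ abbreviates ${\rm Re}(e^{-\sqrt{-1}\hat\theta}\Omega^n_{\phi^\epsilon})$. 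The second term can be handled by noting $\ddt \Omega_{\phi^\epsilon}^n = n\sqrt{-1}\ddb\dot\phi^\epsilon \wedge \Omega_{\phi^\epsilon}^{n-1}$, integrating by parts to move the $\ddb$ off of $\dot\phi^\epsilon$ so that one derivative lands on $(\dot\phi^\epsilon)^2$ producing a $\dot\phi^\epsilon\de\dot\phi^\epsilon$ term wedged with $\Imnn$-type forms, exactly matching the structure of the geodesic operator. The key point is that the first term, after substituting the equation of Lemma~\ref{Eqn for phi epsilon} for $\ddot\phi^\epsilon\,\Ren$, combines with the integrated-by-parts second term so that all the $O(1)$ contributions cancel, leaving only the explicit right-hand side $-4e^{-2t}\epsilon^2{\rm Im}(e^{-\sqrt{-1}\hat\theta}\Omega_{\phi^\epsilon}^n)$ paired against $2\dot\phi^\epsilon$. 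Since $|\dot\phi^\epsilon|\le C$ and $|\ddb\phi^\epsilon|\le C$ uniformly by \eqref{CY estimate 2}, this residual term is bounded by $C\epsilon^2$, giving (i). One must be slightly careful that no boundary terms appear in the integration by parts, which holds since $X$ is closed.

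For part (ii), the idea is that $E^\epsilon$ is, up to the $O(\epsilon^2)$ error from (i), the squared speed of a nearly-constant-speed path, so it is essentially bounded below by $d(\phi_0,\phi_1)^2$; but since $d$ is what we are trying to understand, instead I would extract the lower bound directly from the behavior at the endpoints using Lemma~\ref{Almost time convexity}. Concretely, by (i), $|E^\epsilon(t) - E^\epsilon(0)| \le C\epsilon^2$ and $|E^\epsilon(t)-E^\epsilon(1)|\le C\epsilon^2$, so it suffices to bound $E^\epsilon(0)$ and $E^\epsilon(1)$ below by the two integrals in the statement, up to $C\epsilon^2$. At $t=0$ we have $\phi^\epsilon(0)=\phi_0$ and $\Ren\big|_{t=0}$ is exactly ${\rm Re}(e^{-\sqrt{-1}\hat\theta}\Omega_{\phi_0}^n)$, so $E^\epsilon(0) = \int_X (\dot\phi^\epsilon(0))^2 {\rm Re}(e^{-\sqrt{-1}\hat\theta}\Omega_{\phi_0}^n)$. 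The point is to lower bound $\dot\phi^\epsilon(0)$ pointwise: Lemma~\ref{Almost time convexity} gives $\ddot\phi^\epsilon \ge -C\epsilon^2$, hence for $t\in[0,1]$, $\phi^\epsilon(t) \ge \phi_0 + t\,\dot\phi^\epsilon(0) - C\epsilon^2 t^2$; evaluating at $t=1$ using $\phi^\epsilon(1)=\phi_1$ gives $\dot\phi^\epsilon(0) \le \phi_1 - \phi_0 + C\epsilon^2$, and similarly the almost-convexity bound applied from the other endpoint gives the matching inequality for $\dot\phi^\epsilon(1)$. On the set $\{\phi_0 > \phi_1\}$ this says $\dot\phi^\epsilon(0) \le -(\phi_0-\phi_1) + C\epsilon^2 < 0$ (once $\epsilon$ is small relative to the gap, but we want a uniform statement, so I keep the $C\epsilon^2$), hence $(\dot\phi^\epsilon(0))^2 \ge (\phi_0-\phi_1)^2 - C\epsilon^2$ there. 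Integrating against the positive measure ${\rm Re}(e^{-\sqrt{-1}\hat\theta}\Omega_{\phi_0}^n)$ over $\{\phi_0>\phi_1\}$ and discarding the (nonnegative) contribution from the rest of $X$ yields the first term; repeating verbatim at $t=1$ with $\phi_1$ yields the second; taking the max and absorbing the cumulative $C\epsilon^2$ errors finishes (ii).

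The main obstacle is the computation in (i): one needs the cancellation of all $O(1)$ terms to be exact, which requires correctly matching the integration-by-parts of $\int_X (\dot\phi^\epsilon)^2\,\ddt\Ren$ against the geodesic-operator terms in Lemma~\ref{Eqn for phi epsilon}, including getting the combinatorial factor $n$ and the split between the $\de\dot\phi^\epsilon\wedge\dbar\dot\phi^\epsilon\wedge\Imnn$ term and the $\ddot\phi^\epsilon$ term right. A secondary subtlety in (ii) is that the pointwise inequality $(\dot\phi^\epsilon(0))^2 \ge (\phi_0-\phi_1)^2 - C\epsilon^2$ on $\{\phi_0>\phi_1\}$ uses that $\phi_0-\phi_1$ is bounded (so $(a+C\epsilon^2)^2 \ge a^2 - C'\epsilon^2$ uniformly), which is fine since $\phi_0,\phi_1$ are fixed smooth functions; and one should check the set $\{\phi_0>\phi_1\}$ is open so the integral makes sense, which is immediate. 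Everything else is a routine application of the uniform estimates \eqref{CY estimate 1}–\eqref{CY estimate 2} and Lemma~\ref{Almost time convexity}.
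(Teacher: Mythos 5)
Your proposal is correct and follows the paper's proof essentially step by step. For (i) you differentiate $E^{\epsilon}$, integrate by parts to turn the $\frac{d}{dt}\Ren$ term into $2n\int\dot\phi^\epsilon\sqrt{-1}\partial\dot\phi^\epsilon\wedge\bar\partial\dot\phi^\epsilon\wedge\Imnn$, recognize that this pairs with $2\int\dot\phi^\epsilon\ddot\phi^\epsilon\Ren$ to reproduce the left side of the $\epsilon$-geodesic equation from Lemma~\ref{Eqn for phi epsilon}, and bound the resulting $-8e^{-2t}\epsilon^{2}\int\dot\phi^\epsilon\,\Imn$ using the uniform estimates of Theorem~\ref{thm: CY}; for (ii) you use Lemma~\ref{Almost time convexity} to get the pointwise one-sided bound $\dot\phi^\epsilon(0)\le\phi_1-\phi_0+C\epsilon^2$, square it on $\{\phi_0>\phi_1\}$, argue symmetrically at $t=1$, and propagate across $[0,1]$ via (i) — exactly as in the paper. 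The only tiny point worth flagging is a likely notation slip where you wrote $\sqrt{-1}\ddb\dot\phi^\epsilon$ in place of $\sqrt{-1}\partial\bar\partial\dot\phi^\epsilon$ (the paper's $\ddbar$), but your subsequent description of the cancellation makes clear you intended the correct object, so this is cosmetic rather than a gap.
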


\begin{proof}
We use the notation $\Omega_{\phi}$ introduced in Definition~\ref{defn: notation}. For (i), by direct computation we have
\[
\begin{aligned}
\frac{d E^{\epsilon}}{dt} & = 2\int \dot{\phi}^{\epsilon} \ddot{\phi}^{\epsilon} {\rm Re}\left(e^{-\sqrt{-1}\hat{\theta}}\Omega_{\phi^{\epsilon}}^n\right)+2\int \dot{\phi}^{\epsilon} n\sqrt{-1}\de \dot{\phi}^{\epsilon}\wedge \dbar \dot{\phi}^{\epsilon} \wedge {\rm Im}\left(e^{-\sqrt{-1}\hat{\theta}}\Omega_{\phi^{\epsilon}}^{n-1}\right).
\end{aligned}
\]
Therefore, by Lemma \ref{Eqn for phi epsilon}, we obtain
\[
\frac{d E^{\epsilon}}{dt} = -8e^{-2t}\epsilon^2\int \dot{\phi}^{\epsilon}{\rm Im}\left(e^{-\sqrt{-1}\hat{\theta}} \Omega_{\phi^{\epsilon}}^{n}\right).
\]
Using the estimate (\ref{CY estimate 2}), we have
\[
|\dot{\phi}^{\epsilon}|<C, \quad |\ddb \phi^\epsilon| <C.
\]
It then follows that
\[
\bigg|\int \dot{\phi}^{\epsilon}{\rm Im}\left(e^{-\sqrt{-1}\hat{\theta}}\Omega_{\phi^{\epsilon}}^{n}\right)\bigg| < C,
\]
and hence we obtain (i).

For (ii), by Lemma \ref{Almost time convexity}, we have
\[
\ddot{\phi}^{\epsilon} \geq -C\epsilon^{2},
\]
which implies
\[
\dot{\phi}^{\epsilon}(0) \leq \phi^{\epsilon}(1)-\phi^{\epsilon}(0)+C\epsilon^{2}= \phi_1-\phi_0 + C\epsilon^2.
\]
Together with the bound $|\dot{\phi}^{\epsilon}|<C$, we get that for any point $p\in\{\phi_{0}>\phi_{1}\}$ there holds
\[
\begin{split}
\left(\dot{\phi}^{\epsilon}(p,0)\right)^{2}
\geq {} & \left( \phi_{1}(p)- \phi_{0}(p) \right)^{2}-C\epsilon^{4}.
\end{split}
\]
Thus,
\[
\begin{split}
E^{\epsilon}(0)
= {} &  \int_{X}(\dot{\phi^{\epsilon}}(0))^{2}{\rm Re}\left(e^{-\sqrt{-1}\hat{\theta}}\Omega_{\phi_{0}}^{n}\right) \\
\geq {} & \int_{\{\phi_{0}>\phi_{1}\}}(\phi_{0}-\phi_{1})^{2}{\rm Re}\left(e^{-\sqrt{-1}\hat{\theta}}\Omega_{\phi_{0}}^{n}\right)
-C\epsilon^{4}.
\end{split}
\]
By a similar argument, we obtain
\[
\begin{split}
E^{\epsilon}(1)
\geq {} & \int_{\{\phi_{1}>\phi_{0}\}}(\phi_{1}-\phi_{0})^{2}{\rm Re}\left(e^{-\sqrt{-1}\hat{\theta}}\Omega_{\phi_{1}}^{n}\right)
-C\epsilon^{4}.
\end{split}
\]
Using $\left|\frac{dE^{\epsilon}}{dt}\right|\leq C\epsilon^{2}$, it is clear that
\[
E^{\epsilon}(t) \geq \max(E^{\epsilon}(0) ,E^{\epsilon}(1))-C\epsilon^{2}.
\]
Combining the above estimates, we get (ii).
\end{proof}

\subsection{Proof of Theorem \ref{Metric structure}}
In this subsection, we give the proof of Theorem \ref{Metric structure}. The general structure of the argument follows that of \cite{Blocki2011, Chen2000}. The first step is to prove a weak version of the triangle inequality.
\begin{lemma}\label{triangle}
Suppose $\psi(t)$ is a smooth curve in $\mathcal{H}$, for $t\in [0,1]$.  Fix a point $\tilde{\psi} \notin \psi([0,1])$, $\tilde{\psi}\in \mathcal{H}$.  For each $t \in [0,1]$, lets $\phi(s,t)$ be an $\epsilon$-geodesic joining $\psi(t)$ to $\tilde{\psi}$.  Then there is a constant $C>0$ depending only on $\tilde\psi,\psi(\cdot),(X,\omega)$ and $\a$ so that
\[
{\rm length}(\phi(\cdot, 0)) \leq {\rm length}(\psi) + {\rm length}(\phi(\cdot, 1)) + C\epsilon.
\]
\end{lemma}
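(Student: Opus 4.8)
The plan is to estimate the length of the $\epsilon$-geodesic $\phi(\cdot,0)$ from $\psi(0)$ to $\tilde\psi$ by comparing it with the concatenated path that first runs along $\psi$ from $\psi(0)$ to $\psi(1)$ and then along $\phi(\cdot,1)$ from $\psi(1)$ to $\tilde\psi$. Since $\phi(\cdot,0)$ is an $\epsilon$-geodesic (not an exact geodesic, and not even a true minimizer of length), the naive triangle inequality is not literally available, so we must work with the energy-type functional $E^\epsilon$ from Lemma~\ref{Estimates for E} rather than with length directly. The key device is the two-parameter family $\phi(s,t)$: for each fixed $t$, $\phi(\cdot,t)$ is an $\epsilon$-geodesic from $\psi(t)$ to $\tilde\psi$, and we want to control how its length varies in $t$.

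First I would recall (or record) the first variation of length along a family of $\epsilon$-geodesics. Writing $L(t) := {\rm length}(\phi(\cdot,t))$ and differentiating in $t$, the interior term is governed by the $\epsilon$-geodesic equation of Lemma~\ref{Eqn for phi epsilon}: because $\phi(\cdot,t)$ solves the $\epsilon$-geodesic equation up to an error of size $\epsilon^2$ (the right-hand side $-4e^{-2s}\epsilon^2\,{\rm Im}(\cdots)$), the usual cancellation that occurs for true geodesics holds here modulo a term bounded by $C\epsilon^2$, using the uniform $C^2$ bounds \eqref{CY estimate 2} from Theorem~\ref{thm: CY}. What survives is the boundary contribution at $s=0$, i.e. the endpoint $\psi(t)$, which is exactly $\langle \dot\phi(\cdot,t)|_{s=0}\,/\,\|\dot\phi\|, \ \partial_t \psi(t)\rangle_{\psi(t)}$ up to normalization; by Cauchy--Schwarz this is bounded in absolute value by $\|\partial_t\psi(t)\|_{\psi(t)}$, which is precisely the integrand of ${\rm length}(\psi)$. (One must also handle the reparametrization: an $\epsilon$-geodesic has $|\dot\phi^\epsilon|$ nearly constant in $s$ by Lemma~\ref{Almost time convexity}, so $E^\epsilon(t) = \mathrm{length}(\phi(\cdot,t))^2 + O(\epsilon^2)$ and the two functionals are interchangeable up to $O(\epsilon^2)$ errors — this is where Lemma~\ref{Estimates for E}(i) and Lemma~\ref{Almost time convexity} enter.) Integrating $L'(t)$ from $0$ to $1$ then yields
\[
L(0) \leq L(1) + \int_0^1 \|\partial_t \psi(t)\|_{\psi(t)}\,dt + C\epsilon = {\rm length}(\phi(\cdot,1)) + {\rm length}(\psi) + C\epsilon.
\]

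The main obstacle I anticipate is making the differentiation-in-$t$ step rigorous: one needs that $\phi(s,t)$ depends smoothly (or at least $C^1$) on the parameter $t$, with derivatives bounded uniformly in $\epsilon$ — this requires differentiating the $\epsilon$-geodesic equation in $t$ and invoking elliptic estimates that are uniform in $\epsilon$, which is delicate because the equation degenerates as $\epsilon\to 0$. A cleaner route that sidesteps the smooth dependence is to discretize: partition $[0,1]$ into $N$ subintervals, use the $\epsilon$-geodesic triangle comparison on each short step $[\psi(t_{i}),\psi(t_{i+1}),\tilde\psi]$ — where the comparison edge $\psi|_{[t_i,t_{i+1}]}$ has small length — and sum, controlling the accumulated error. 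For this one proves a single-step estimate of the form ${\rm length}(\phi(\cdot,t_i)) \le {\rm length}(\phi(\cdot,t_{i+1})) + d(\psi(t_i),\psi(t_{i+1})) + (\text{error})$ using the energy $E^\epsilon$, Lemma~\ref{Estimates for E}, and the uniform bounds, then lets $N\to\infty$. Either way, the crux is that the $O(\epsilon^2)$ defect in the $\epsilon$-geodesic equation, integrated against the bounded quantities from \eqref{CY estimate 2}, contributes only $O(\epsilon)$ to the length over the fixed-length domain $\mathcal{X}$, which survives as the $C\epsilon$ in the claim.
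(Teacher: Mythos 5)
Your proposal follows essentially the same route as the paper's proof: set $\ell_1(t) = {\rm length}(\psi|_{[0,t]})$, $\ell_2(t) = {\rm length}(\phi(\cdot,t))$, differentiate in $t$, exploit the $\epsilon$-geodesic equation of Lemma~\ref{Eqn for phi epsilon} so that the interior terms in $\ell_2'(t)$ collapse to a total $s$-derivative plus an $O(\epsilon^2)$ defect, integrate by parts in $s$, and bound the surviving boundary term at $s=0$ by Cauchy--Schwarz to get $\ell_1'+\ell_2' \geq -C\epsilon^2$. The ``main obstacle'' you flag --- smooth dependence of $\phi(s,t)$ on $t$ with uniform control --- is handled in the paper not by uniform elliptic estimates (which would indeed be delicate as $\epsilon\to 0$) but simply by the maximum principle: $\phi_t$ solves the linearized $\epsilon$-geodesic equation with boundary data $0$ at $\tilde\psi$ and $\partial_t\psi$ at $\psi(t)$, giving a uniform $L^\infty$ bound on $\phi_t$ that is all the argument needs; for fixed $\epsilon>0$ the equation is uniformly elliptic and differentiability in $t$ is standard. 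Your discretization alternative would also work but is not necessary. One further ingredient that must be cited for your plan to close is Lemma~\ref{Estimates for E}(ii), which furnishes the lower bound $E^\epsilon \geq c > 0$ needed to keep the factors $E^{-1/2}$, $E^{-3/2}$ in the first-variation formula under control.
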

\begin{proof}
Define
\[
\ell_1(t) = {\rm length}(\psi\bigg|_{[0,t]}) \qquad \ell_2(t) = {\rm length}(\phi(\cdot, t)).
\]
It suffices to prove $\ell_1' + \ell_2' \geq - C\epsilon$.  We have
\[
\ell_1'(t) = \left[\int_{X} (\dot{\psi}(t))^2 {\rm Re}\left(e^{-\sqrt{-1}\hat{\theta}} \Omega_\psi^{n}\right)\right]^{1/2}.
\]
We also have
\[
\ell_2'(t) = \int_{0}^{1}\frac{1}{2E(s,t)^{1/2}} \de_{t}E(s,t) ds,
\]
where
\[
E(s,t) = \int_{X} (\frac{\de \phi}{\de s})^2 {\rm Re}\left(e^{-\sqrt{-1}\hat{\theta}} \Omega_\phi^{n}\right).
\]
Write $\frac{\de \phi}{\de s} = \phi_s$ and similarly for $t$.  Then we have (using the notation of Definition~\ref{defn: notation})
\[
\begin{aligned}
\frac{\de E(s,t)}{\de t} &= \int_{X} 2\phi_s \phi_{st} {\rm Re}\left(e^{-\sqrt{-1}\hat{\theta}}\Omega_\phi^{n}\right)\\
&\quad +  2\int_{X} \phi_s n\sqrt{-1}\de \phi_{s} \wedge \dbar \phi_t \wedge  {\rm Im}\left(e^{-\sqrt{-1}\hat{\theta}} \Omega_\phi^{n-1}\right).
\end{aligned}
\]
At the same time we have
\[
\begin{aligned}
&\quad \frac{\de}{\de s} \int_{X} \phi_s \phi_t{\rm Re}\left(e^{-\sqrt{-1}\hat{\theta}} \Omega_\phi^{n}\right) \\
&= \int_{X} \phi_s \phi_{st}{\rm Re}\left(e^{-\sqrt{-1}\hat{\theta}} \Omega_\phi^{n}\right)-n  \int_{X} \phi_s \phi_{t}{\rm Im}\left(e^{-\sqrt{-1}\hat{\theta}} \Omega_\phi^{n-1}\right) \wedge \sqrt{-1}\ddb \phi_{s}\\
&\quad +  \int_{X} \phi_{ss} \phi_{t}{\rm Re}\left(e^{-\sqrt{-1}\hat{\theta}} \Omega_\phi^{n}\right).
\end{aligned}
\]
Integrating by parts on the second term and applying the $\epsilon$-geodesic equation we obtain
\[
\begin{aligned}
&\frac{\de}{\de s} \int_{X} \phi_s \phi_t{\rm Re}\left(e^{-\sqrt{-1}\hat{\theta}} \Omega_\phi^{n}\right)  =\frac{1}{2} \frac{\de E(s,t)}{\de t} -e^{-2s}\epsilon^2\int_{X}\phi_t {\rm Im}\left(e^{-\sqrt{-1}\hat{\theta}} \Omega_\phi^{n}\right).
\end{aligned}
\]
Therefore
\begin{align*}
\ell_2' &= \int_{0}^{1}E^{-1/2} \left(\de_{s}\int_{X}\phi_s\phi_t {\rm Re}(e^{-\sqrt{-1}\hat{\theta}}\Omega_{\phi}^n) \right)ds \\
&\quad - \int_{0}^{1}\frac{1}{2E(s,t)^{\frac{1}{2}}} e^{-2s} \epsilon^2 \int_{X}\phi_t {\rm Im}\left(e^{-\sqrt{-1}\hat{\theta}}\Omega_{\phi}^n\right) ds.
\end{align*}
Integration by parts on the the first term yields
\[
\left[E^{-\frac{1}{2}}\int_{X}\phi_s \phi_t{\rm Re}(e^{-\sqrt{-1}\hat{\theta}}\Omega_{\phi}^n)\right]\bigg|_{s=0}^{s=1} - \frac{1}{2}\int_{0}^{1} E^{-3/2} \de_s E \int_{X}\phi_s\phi_t{\rm Re}(e^{-\sqrt{-1}\hat{\theta}}\Omega_{\phi}^n) ds.
\]
Now, by definition we have $\phi_t(1, \cdot) =0$, $\phi_{t}(0, \cdot) = \de_t \psi$.  If we set $\eta = \phi_s(0,\cdot)$, then we have
\begin{equation}\label{eq: diffDist}
\begin{aligned}
\ell_2' &=- \frac{\int_{X}\eta \psi_t {\rm Re}(e^{-\sqrt{-1}\hat{\theta}}\Omega_{\psi}^n)}{\left(\int_{X} \eta^2  {\rm Re}(e^{-\sqrt{-1}\hat{\theta}}\Omega_{\psi}^n)\right)^{\frac{1}{2}}}- \frac{1}{2}\int_{0}^{1} E^{-3/2} \de_s E \int_{X}\phi_s\phi_t{\rm Re}(e^{-\sqrt{-1}\hat{\theta}}\Omega_{\phi}^n) ds\\
& -\int_{0}^{1}\frac{1}{2E(s,t)^{\frac{1}{2}}} e^{-2s} \epsilon^2 \int_{X}\phi_t {\rm Im}\left(e^{-\sqrt{-1}\hat{\theta}}\Omega_\phi^n\right) ds.
\end{aligned}
\end{equation}
Using the Cauchy-Schwarz inequality we have
\[
\begin{aligned}
\ell_2' &\geq - \left(\int_{X} \psi_t^2 {\rm Re}(e^{-\sqrt{-1}\hat{\theta}}\Omega_{\psi}^n)\right)^{\frac{1}{2}}- \frac{1}{2}\int_{0}^{1} E^{-3/2} \de_s E \int_{X}\phi_s\phi_t{\rm Re}(e^{-\sqrt{-1}\hat{\theta}}\Omega_{\phi}^n) ds\\
& -\int_{0}^{1}\frac{1}{2E(s,t)^{\frac{1}{2}}} e^{-2s} \epsilon^2 \int_{X}\phi_t {\rm Im}\left(e^{-\sqrt{-1}\hat{\theta}}\Omega_\phi^n\right) ds.
\end{aligned}
\]
Therefore, by the formula for $E_s$ we have
\[
\begin{aligned}
\ell_1'+\ell_2' &\geq \int_{0}^{1} E^{-3/2}e^{-2s} \epsilon^2 \int_{X} \phi_s {\rm Re}(e^{-\sqrt{-1}\hat{\theta}}\Omega_{\phi}^n)\int_{X}\phi_s\phi_t{\rm Re}(e^{-\sqrt{-1}\hat{\theta}}\Omega_{\phi}^n) ds \\
& -\int_{0}^{1}\frac{1}{2E(s,t)^{\frac{1}{2}}} e^{-2s} \epsilon^2 \int_{X}\phi_t {\rm Im}\left(e^{-\sqrt{-1}\hat{\theta}}\Omega_\phi^n\right) ds.
\end{aligned}
\]

On the other hand, by Lemma \ref{Estimates for E} (ii), we have $E>c>0$ for $\epsilon$ sufficiently small.  Furthermore, $\phi_s$ uniformly bounded by the uniform estimates for $\epsilon$-geodesics in Theorem~\ref{thm: CY}.  We claim that $\phi_t$ is uniformly bounded by the maximum principle.  To see this observe that the associated $S^{1}$-invariant functions $\Phi(s,t)$ on $\mathcal{X}$ yield a $t$-dependent family of solutions to the $\epsilon$-geodesic equation, which is elliptic.  Differentiating in $T$ shows that $\de_t\Phi$ solves the linearized $\epsilon$-geodesic equation with boundary data $0$ and $\de_t \psi$. The result now follows from the maximum principle.     The uniform estimates in Theorem~\ref{thm: CY} also imply an upper bound for ${\rm Re}(e^{-\sqrt{-1}\hat{\theta}}\Omega_{\phi})$, and so the result follows.
\end{proof}

\begin{lemma}\label{classical-metric-relation}
There is a constant $C>0$ depending only on $[\alpha], [\omega]$ such that, for any $\phi_0,\phi_1\in \mathcal{H}$ we have,
\[
\limsup_{\ve\rightarrow 0^+}{\rm length}(\phi^\ve)\leq C\|\phi_0-\phi_1\|_\infty
\]
where $\phi^\ve(s),s\in [0,1]$ is the $\ve$-geodesic from $\phi_0$ to $\phi_1$.
\end{lemma}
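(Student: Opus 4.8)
The plan is to bound the length of the $\epsilon$-geodesic $\phi^\epsilon$ by choosing a competitor path whose length is explicitly controlled, and then invoking the $\epsilon$-independent estimates of Theorem~\ref{thm: CY}. The natural competitor is the linear path $\psi(s) = (1-s)\phi_0 + s\phi_1$. Since the hypercritical phase condition is an open condition and $\mathcal{H}$ is the connected component containing $\phi_0$ and $\phi_1$, one first checks that for $\phi_0,\phi_1 \in \mathcal{H}$ the segment $\psi(s)$ stays in $\mathcal{H}$; this uses the maximum-principle/unique-branch discussion in Section~\ref{Geodesics section} (or one can bypass it by a partition-into-small-steps argument). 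Granting this, $\mathrm{length}(\psi) = \int_0^1 \left( \int_X (\phi_1-\phi_0)^2 {\rm Re}(e^{-\sqrt{-1}\hat\theta}\Omega_{\psi(s)}^n) \right)^{1/2} ds$. Because $\psi(s)$ ranges over a compact subset of $\mathcal{H}$ (the image of a continuous path from a compact interval), the form ${\rm Re}(e^{-\sqrt{-1}\hat\theta}\Omega_{\psi(s)}^n)$ is bounded above by $C\,\omega^n$ for a constant depending only on $\phi_0,\phi_1,[\alpha],[\omega]$; hence $\mathrm{length}(\psi) \leq C\|\phi_0-\phi_1\|_\infty$.

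The remaining point is to compare $\mathrm{length}(\phi^\epsilon)$ with $\mathrm{length}(\psi)$. Here I would apply Lemma~\ref{triangle} in a degenerate form, or rather re-run its argument with $\tilde\psi$ replaced by an endpoint: more directly, one observes that among all smooth paths from $\phi_0$ to $\phi_1$, the $\epsilon$-geodesic is \emph{not} a priori length-minimizing, so one cannot simply say $\mathrm{length}(\phi^\epsilon) \leq \mathrm{length}(\psi)$. Instead, I would argue as in Lemma~\ref{triangle}: let $\psi(t)$, $t\in[0,1]$, be the linear segment, let $\phi(\cdot,t)$ be the $\epsilon$-geodesic from $\psi(0)=\phi_0$ to $\psi(t)$, so $\phi(\cdot,1) = \phi^\epsilon$. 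Setting $\ell_2(t) = \mathrm{length}(\phi(\cdot,t))$, the computation in the proof of Lemma~\ref{triangle} (with the roles adjusted so the moving endpoint is $\psi(t)$ and the fixed endpoint is $\phi_0$) gives $\ell_2'(t) \leq \ell_1'(t) + C\epsilon$, where $\ell_1(t) = \mathrm{length}(\psi|_{[0,t]})$ and $C$ depends on the uniform bounds of Theorem~\ref{thm: CY} along the family and the lower bound $E > c > 0$ from Lemma~\ref{Estimates for E}(ii). Integrating from $0$ to $1$ and using $\ell_2(0) = 0$ yields $\mathrm{length}(\phi^\epsilon) = \ell_2(1) \leq \mathrm{length}(\psi) + C\epsilon \leq C\|\phi_0-\phi_1\|_\infty + C\epsilon$, and taking $\limsup_{\epsilon\to 0^+}$ gives the claim.

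The main obstacle is making the constant in the comparison step genuinely uniform in $\epsilon$: this requires that the $\epsilon$-geodesics $\phi(\cdot,t)$ joining $\phi_0$ to $\psi(t)$ satisfy the estimates \eqref{CY estimate 2} with a constant independent of both $\epsilon$ and $t\in[0,1]$, and that $E(s,t)$ is bounded below away from zero uniformly in $\epsilon$ and $t$. The first follows because the endpoint $\psi(t)$ varies in a fixed compact family, so the proof of Theorem~\ref{thm: CY} in \cite{CY18} gives estimates depending only on $\sup_t(\|\psi(t)\|_{C^k})$, hence only on $\phi_0,\phi_1$; the second follows from Lemma~\ref{Estimates for E}(ii) applied with endpoints $\phi_0,\psi(t)$ — unless $\phi_0 = \psi(t)$, i.e. $t=0$, but near $t=0$ one simply uses $\ell_2$ continuity and that $\ell_2(0)=0$, or alternatively that $\mathrm{length}(\phi(\cdot,t)) \to 0$ as $t\to 0$. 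A secondary technical point worth a sentence is the $C^1$-regularity in $t$ of $\ell_2(t)$, which follows from smooth dependence of the $\epsilon$-geodesic on its boundary data (again via the linearized elliptic equation and the maximum principle, as in Lemma~\ref{triangle}).
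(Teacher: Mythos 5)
Your approach is genuinely different from the paper's: you propose to exhibit a competitor path (the linear segment $\psi(s)=(1-s)\phi_0+s\phi_1$), bound its length by $C\|\phi_0-\phi_1\|_\infty$, and then transfer this to the $\epsilon$-geodesic via a variant of Lemma~\ref{triangle}. The paper instead works directly on $\phi^\epsilon$: it constructs explicit sub/supersolutions $\phi_1 + v$ for the rescaled $\epsilon$-geodesic equation $\Theta_{\hat\omega_\epsilon}(\alpha+\mn D\ov{D}\Phi^\epsilon)=\hat\theta$, applies the comparison principle to get pointwise barriers for $\Phi^\epsilon$, and then reads off bounds on $\dot\phi^\epsilon(0)$ and $\dot\phi^\epsilon(1)$ of the form $\tfrac{1}{2}\|\phi_0-\phi_1\|_\infty + O(\epsilon^2)$; combined with the almost time-convexity of Lemma~\ref{Almost time convexity} this gives $\sup_t|\dot\phi^\epsilon(t)|\leq \tfrac{1}{2}\|\phi_0-\phi_1\|_\infty + O(\epsilon^2)$, and plugging into ${\rm length}(\phi^\epsilon)=\int_0^1\sqrt{E^\epsilon}\,dt$ together with the topological identity $\int_X\Ren = \bigl|\int_X(\omega+\sqrt{-1}\alpha)^n\bigr|$ gives the result with a constant depending only on $[\alpha],[\omega]$.

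There are two genuine gaps in your proposal. First, the assertion that $\psi(s)$ stays in $\mathcal{H}$ is not justified by anything you cite: neither the maximum-principle/unique-branch discussion nor a ``partition into small steps'' gives convexity of $\mathcal{H}$ (a piecewise-linear path through $\mathcal{H}$ would also not preserve the sup-norm bound you need). The claim \emph{is} true under the hypercritical phase assumption, because then the constraint defining $\mathcal{H}$ is $\Theta_\omega(\alpha_\phi)>\hat\theta-\frac{\pi}{2}>(n-2)\frac{\pi}{2}$, and the superlevel sets of the Lagrangian phase operator are convex in that supercritical range — but that is a nontrivial fact about the operator $\Theta_\omega$ which must be invoked explicitly, and which you did not. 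Second, and more seriously, your comparison step breaks down near $t=0$. The error in the proof of Lemma~\ref{triangle} is of size $\epsilon^2/\sqrt{E(s,t)}$, and $E(s,t)\gtrsim \|\phi_0-\psi(t)\|^2 - C\epsilon^2 \sim t^2 - C\epsilon^2$ by Lemma~\ref{Estimates for E}(ii); for $t\lesssim \epsilon$ this lower bound is useless, and even for $t\gtrsim\epsilon$ the accumulated error behaves like $\int_\epsilon^1 \epsilon^2/t\,dt\sim\epsilon^2\log(1/\epsilon)$, whose control requires an argument you have not given. Your suggested remedy — ``near $t=0$ one simply uses $\ell_2$ continuity and $\ell_2(0)=0$'' — begs the question, because you would need $\ell_2(\delta)\to 0$ \emph{uniformly in $\epsilon$} as $\delta\to 0$, and the only way I see to establish that rate is precisely the barrier estimate the paper uses (at which point you might as well use it directly, which is what the paper does). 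A minor point: your bound ${\rm length}(\psi)\leq C\|\phi_0-\phi_1\|_\infty$ as written uses a pointwise comparison $\Ren\leq C\omega^n$ with $C$ depending on $\phi_0,\phi_1$, which is weaker than the stated lemma; to get a constant depending only on $[\alpha],[\omega]$ you should instead factor out $\|\phi_0-\phi_1\|_\infty$ and use the topological identity $\int_X\Ren=\bigl|\int_X(\omega+\sqrt{-1}\alpha)^n\bigr|$.
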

\begin{proof}
To begin, since $\phi_0, \phi_1 \in \mathcal{H}$ we may fix a constant $\ve_0>0$ such that
\begin{equation}
\Re\left(e^{-\sqrt{-1}\hat\theta} (\omega+\sqrt{-1}\alpha_{\phi_i})^n \right)>\ve_0
\end{equation}
for $i=0,1$. We will estimate $\dot{\phi}^{\ve}$ uniformly.  By Lemma \ref{Almost time convexity},
\[
\dot\phi^\ve(0) \leq \dot\phi^\ve(s )+sC\ve^2  \leq \dot\phi^\ve(1)+C\ve^2
\]
on $[0,1]$. Therefore, it suffices to estimate the lower bound of $\dot\phi^\ve(0)$ and the upper bound of $\dot\phi^\ve(1)$. We will work with $\Phi^\ve(t),t\in [e^{-1},1]$ instead. Define
\[
v(t)=A\ve^2 (|t|^2-e^{-2})+2B\log (e|t|)
\]
where $A,B$ to be determined. Furthermore, we have $v_{t\bar t}=A\ve^2$. Hence for $\hat\omega_{\epsilon}=\pi^*\omega+\ve^2 \sqrt{-1}dt\wedge d\bar t$,
\begin{equation}
\begin{split}
\Theta_{\hat\omega_{\ve}}( \alpha + \sqrt{-1}D\ov{D}(\phi_1+v))&=\Theta_\omega(\alpha_{\phi_1})+ \arctan(A)\\
&\geq \hat\theta+\delta_{\ve_0}-\frac{\pi}{2}+\arctan(A).
\end{split}
\end{equation}
Therefore we can choose $A$ sufficiently large depending only on $\ve_0$ such that
\[
\Theta_{\hat\omega_{\ve}}( \alpha + \sqrt{-1}D\ov{D}(\phi_1+v))> \hat\theta=\Theta_{\hat\omega_{\ve}}( \alpha + \sqrt{-1}D\ov{D}\Phi^{\ve}).
\]
We then choose $B$ depending on $A, \ve$ so that  $A\ve^2 (1-e^{-2})+2B=-\|\phi_1-\phi_0\|_\infty$.  Note that as $\epsilon \rightarrow 0$ we have $B\rightarrow -\frac{1}{2}\|\phi_1 -\phi_0\|_{\infty}$.  With these choices $\Phi^\ve \geq \phi_1+v$ on $\partial \mathcal{X}$, and $\Phi^{\ve} = \phi_1 +v$ when $|t|=\epsilon^{-1}$.  By the comparison principle we conclude that $\phi_1+v \leq \Phi^\ve$ on $\mathcal{X}$ and hence for $s=-\log |t|$, $\phi_1+A\ve^2 (e^{-2s}-e^{-2})-2Bs+2B \leq \phi^\ve(s)$ on $X\times [0,1]$. Therefore,
\begin{equation}
\begin{split}
\dot\phi^\ve (1)&\leq \frac{d}{ds}\Big|_{s=1} \left(A\ve^2\cdot e^{-2s}-B s  \right)\\
&=-2e A\ve^2-B.
\end{split}
\end{equation}
The lower bound of $\dot\phi^\ve (0)$ is similar.  Plugging these estimates into the definition of $\rm{length}(\phi^\ve)$ yields the result.
\end{proof}

\begin{proposition}\label{distance-geodesic}
For any $\phi_{0},\phi_{1}\in\mathcal{H}$, we have
\[
d(\phi_{0},\phi_{1}) = \lim_{\epsilon\rightarrow0}{\rm length}(\phi^{\epsilon}),
\]
where $\phi^{\epsilon}$ is the $\epsilon$-geodesic joining $\phi_{0}, \phi_{1}$.
\end{proposition}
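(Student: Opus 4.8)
The plan is to prove the two inequalities $d(\phi_0,\phi_1)\le\liminf_{\epsilon\to0}{\rm length}(\phi^\epsilon)$ and $\limsup_{\epsilon\to0}{\rm length}(\phi^\epsilon)\le d(\phi_0,\phi_1)$ separately. The first is a tautology: the positivity in~\eqref{epsilon geodesic eqn} guarantees that for each $\epsilon>0$ the path $\phi^\epsilon(\cdot)$ lies in $\mathcal{H}$ and joins $\phi_0$ to $\phi_1$, so $d(\phi_0,\phi_1)\le{\rm length}(\phi^\epsilon)$ straight from the definition of $d$, and one takes $\liminf$ over $\epsilon$. We may assume $\phi_0\neq\phi_1$, the case $\phi_0=\phi_1$ being immediate from Lemma~\ref{classical-metric-relation}, which forces $\limsup_{\epsilon\to0}{\rm length}(\phi^\epsilon)=0=d(\phi_0,\phi_1)$.

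The content is the reverse inequality. Since $d(\phi_0,\phi_1)$ is the infimum of ${\rm length}(\psi)$ over smooth paths $\psi$ from $\phi_0$ to $\phi_1$, it suffices to fix one such $\psi$ and show $\limsup_{\epsilon\to0}{\rm length}(\phi^\epsilon)\le{\rm length}(\psi)$. The tool is the weak triangle inequality of Lemma~\ref{triangle}, but it requires a curve together with a fixed point lying \emph{off} the curve, so I would first perform a harmless surgery on $\psi$. Replacing $\psi$ by its restriction to $[0,t_0]$ (reparametrized), where $t_0>0$ is the first time $\psi$ meets $\phi_1$, I may assume, without increasing length, that $\psi(t)\neq\phi_1$ for $t\in[0,1)$. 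Then, given a small $\delta>0$, I would pick $\tilde\psi\in\mathcal{H}$ that is close to $\phi_1$ in $C^\infty$, with $\|\tilde\psi-\phi_1\|_{C^0(X)}\le\delta$, with $\tilde\psi\notin\psi([0,1])$, and with the segment from $\psi(1-\delta)$ to $\tilde\psi$ avoiding $\phi_1$ — all possible since the compact set $\psi([0,1])$ and any affine line are nowhere dense in the infinite-dimensional Fréchet space $C^\infty(X)$ — and let $\gamma:[0,1]\to\mathcal{H}$ follow $\psi$ on $[0,1-\delta]$ and this segment on $[1-\delta,1]$ (corner smoothed). Then $\gamma$ joins $\phi_0$ to $\tilde\psi$, avoids $\phi_1$, and ${\rm length}(\gamma)\le{\rm length}(\psi)+C\delta$.

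Applying Lemma~\ref{triangle} to the curve $\gamma$ and the fixed point $\phi_1\notin\gamma([0,1])$ then gives, writing $\phi^\epsilon_{p\to q}$ for the $\epsilon$-geodesic from $p$ to $q$ (unique by Theorem~\ref{thm: CY}),
\[
{\rm length}(\phi^\epsilon)\;\le\;{\rm length}(\gamma)+{\rm length}\big(\phi^\epsilon_{\tilde\psi\to\phi_1}\big)+C_\delta\,\epsilon,
\]
where $C_\delta$ is independent of $\epsilon$. Sending $\epsilon\to0$ kills the last term (for fixed $\delta$), and Lemma~\ref{classical-metric-relation} bounds $\limsup_{\epsilon\to0}{\rm length}(\phi^\epsilon_{\tilde\psi\to\phi_1})\le C\|\tilde\psi-\phi_1\|_{C^0(X)}\le C\delta$, so $\limsup_{\epsilon\to0}{\rm length}(\phi^\epsilon)\le{\rm length}(\psi)+C\delta$. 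Letting $\delta\to0$ yields $\limsup_{\epsilon\to0}{\rm length}(\phi^\epsilon)\le{\rm length}(\psi)$, and taking the infimum over $\psi$ gives the reverse inequality; combining the two completes the proof.

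Essentially all the analytic work has already been done in Lemma~\ref{triangle}, whose proof integrates the $\epsilon$-geodesic equation by parts, applies Cauchy--Schwarz, and uses the uniform lower bound on $E$ from Lemma~\ref{Estimates for E}. Two points deserve care. First, one must let $\epsilon\to0$ before $\delta\to0$: the constant $C_\delta$ produced by Lemma~\ref{triangle} for the curve $\gamma$ degenerates as $\delta\to0$, since the energy of a short $\epsilon$-geodesic between points near $\phi_1$ tends to $0$ — but this is harmless because $C_\delta$ always appears multiplied by $\epsilon$. Second, and this is the only genuinely new ingredient, a soft one: the surgery that places the point $\phi_1$ off a curve, so that Lemma~\ref{triangle} becomes applicable, rests on the genericity remark about compact sets and lines in $C^\infty(X)$; I expect that to be the only spot requiring more than routine verification.
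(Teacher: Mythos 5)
Your argument is correct and is essentially the route the paper takes: its proof simply cites Blocki's Theorem~15, whose argument is exactly the two-inequality scheme you carry out using Lemma~\ref{triangle} and Lemma~\ref{classical-metric-relation}. One remark: the second half can be streamlined — after the initial truncation ensuring $\psi(t)\neq\phi_1$ for $t<1$, it suffices to apply Lemma~\ref{triangle} directly to the restriction $\psi|_{[0,1-\delta]}$ with fixed point $\phi_1$ (no auxiliary $\tilde\psi$, no genericity claim, and no need to verify that the appended segment stays in the open set $\mathcal{H}$), then let $\epsilon\to0$ and finally $\delta\to0$, using continuity of $\psi$ together with Lemma~\ref{classical-metric-relation} to kill the tail term.
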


\begin{proof}
This follows from the argument in the proof of \cite[Theorem 15]{Blocki2011} using Lemma \ref{classical-metric-relation} and Lemma \ref{triangle}.
\end{proof}

We state an immediate Corollary of Proposition~\ref{distance-geodesic} which will be helpful later.  In essence, this corollary says that (weak) geodesics have constant speed.
\begin{corollary}\label{cor: constSpeed}
Suppose $\phi_0, \phi_1 \in \mathcal{H}$, and let $\phi^{\epsilon}(t)$, $t\in [0,1]$ be an $\epsilon$-geodesic from $\phi_0$ to $\phi_1$.  Then we have
\[
\lim_{\epsilon \rightarrow 0}  \|\dot{\phi}^{\epsilon}(t)\|^2 = d(\phi_0, \phi_1)^2.
\]
\end{corollary}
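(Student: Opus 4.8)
The plan is to combine the almost-constancy of the energy $E^\epsilon(t)$ along an $\epsilon$-geodesic with the identification of the limiting length with the distance. Recall that by definition
\[
\mathrm{length}(\phi^\epsilon) = \int_0^1 \big(E^\epsilon(t)\big)^{1/2}\, dt = \int_0^1 \|\dot\phi^\epsilon(t)\|\, dt,
\]
where $\|\dot\phi^\epsilon(t)\|^2 = E^\epsilon(t)$ in the Riemannian metric at $\phi^\epsilon(t)$. By Lemma~\ref{Estimates for E}(i) we have $|dE^\epsilon/dt| \le C\epsilon^2$, so $E^\epsilon(t)$ is within $C\epsilon^2$ of its average value; in particular $E^\epsilon(t) = E^\epsilon(t') + O(\epsilon^2)$ uniformly in $t,t' \in [0,1]$. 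Taking square roots (and using the uniform positive lower bound $E^\epsilon \ge c > 0$ for small $\epsilon$ from Lemma~\ref{Estimates for E}(ii), valid whenever $\phi_0 \ne \phi_1$; the case $\phi_0 = \phi_1$ being trivial since then $d(\phi_0,\phi_1)=0$ and $\dot\phi^\epsilon \equiv 0$), we get $(E^\epsilon(t))^{1/2} = (E^\epsilon(t'))^{1/2} + O(\epsilon^2)$ uniformly.

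Next I would integrate this over $t' \in [0,1]$: for each fixed $t$,
\[
\big(E^\epsilon(t)\big)^{1/2} = \int_0^1 \big(E^\epsilon(t')\big)^{1/2}\, dt' + O(\epsilon^2) = \mathrm{length}(\phi^\epsilon) + O(\epsilon^2).
\]
Now apply Proposition~\ref{distance-geodesic}, which gives $\lim_{\epsilon \to 0}\mathrm{length}(\phi^\epsilon) = d(\phi_0,\phi_1)$. Hence $\lim_{\epsilon \to 0}(E^\epsilon(t))^{1/2} = d(\phi_0,\phi_1)$ for every $t \in [0,1]$, and squaring yields $\lim_{\epsilon \to 0}\|\dot\phi^\epsilon(t)\|^2 = \lim_{\epsilon\to 0} E^\epsilon(t) = d(\phi_0,\phi_1)^2$, as claimed.

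There is essentially no serious obstacle here: the corollary is a soft consequence of the two quoted results, and the only point requiring a word of care is the division by $(E^\epsilon(t'))^{1/2}$ when passing from the $E^\epsilon$-estimate to the $\sqrt{E^\epsilon}$-estimate, which is legitimate precisely because of the lower bound in Lemma~\ref{Estimates for E}(ii). One could alternatively avoid even this by arguing directly with $E^\epsilon$ and the elementary inequality $|\sqrt a - \sqrt b| \le \sqrt{|a-b|}$, which handles the degenerate case uniformly as well. Either way the estimate is uniform in $t$, so the stated pointwise limit holds.
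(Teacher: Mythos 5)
Your argument is correct in substance and uses exactly the same two ingredients as the paper (Lemma~\ref{Estimates for E} and Proposition~\ref{distance-geodesic}), but your route is slightly more direct: the paper extracts a convergent subsequence via Arzel\`a--Ascoli from the bounds $|E^\epsilon|\le C$, $|dE^\epsilon/dt|\le C\epsilon^2$, identifies the limit $A$ with $d(\phi_0,\phi_1)^2$ using Proposition~\ref{distance-geodesic}, and then concludes; you instead integrate the almost-constancy of $E^\epsilon$ against $t'$ to relate $\sqrt{E^\epsilon(t)}$ directly to $\mathrm{length}(\phi^\epsilon)$, avoiding the subsequence step. Both work, and yours is arguably a touch cleaner since uniqueness of the subsequential limit need not be invoked.

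One slip worth flagging: your parenthetical dispatch of the case $\phi_0=\phi_1$ by claiming $\dot\phi^\epsilon\equiv 0$ is not correct. The constant path is generally \emph{not} an $\epsilon$-geodesic with equal endpoints: the $\epsilon$-geodesic equation is $\Theta_{\hat\omega_\epsilon}(\pi^*\alpha+\sqrt{-1}D\overline{D}\Phi^\epsilon)=\hat\theta$ on all of $\mathcal{X}_\epsilon$, and for the constant extension $\Phi^\epsilon=\phi_0$ this would force $\Theta_\omega(\alpha_{\phi_0})\equiv\hat\theta$, i.e.\ $\phi_0$ would have to solve the dHYM equation. So $\dot\phi^\epsilon\not\equiv 0$ in general even when the endpoints coincide. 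Fortunately, the alternative you mention at the end --- using $|\sqrt{a}-\sqrt{b}|\le\sqrt{|a-b|}$ to get $|\sqrt{E^\epsilon(t)}-\sqrt{E^\epsilon(t')}|\le C\epsilon$ without any lower bound on $E^\epsilon$ --- handles all cases uniformly and should simply replace the division-by-$\sqrt{E^\epsilon}$ step as your main argument, not an aside.
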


\begin{proof}
The corollary is an easy consequence of Lemma~\ref{Estimates for E} and Proposition~\ref{distance-geodesic}.  Let
\[
E^{\epsilon}(t) =\|\dot{\phi}^{\epsilon}(t)\|^2 = \int_{X} (\dot{\phi}^{\epsilon}(t))^2 \Ren.
\]
By the uniform estimates of Theorem~\ref{thm: CY} and Lemma~\ref{Estimates for E} (i) there is a constant $C$ independent of $\epsilon$ so that $|E^{\epsilon}(t)| \leq C$ and $|\frac{d E^{\epsilon}}{dt}| \leq C\epsilon^2$.  By Arzela-Ascoli, after passing to a subsequence we have $E^{\epsilon}(t) \rightarrow A$ for some constant $A$.  On the other hand, by Proposition~\ref{distance-geodesic} we have
\[
d(\phi_0, \phi_1) = \lim_{\epsilon \rightarrow 0} \int_0^1 \sqrt{E^{\epsilon}(t)} dt = \sqrt{A}
\]
thus $E^{\epsilon}(t) \rightarrow d(\phi_0,\phi_1)^2$ as $\epsilon \rightarrow 0$ and the result follows.
\end{proof}

The next proposition gives a lower bound for $d(\phi_0, \phi_1)$, establishing that $d$ is a non-degenerate distance function.
\begin{proposition}\label{positivity}
For any $\phi_{0},\phi_{1}\in\mathcal{H}$, we have
\[
\begin{split}
d(\phi_{0},\phi_{1}) ^{2} \geq {} &
\max \left(
\int_{\{\phi_{0}>\phi_{1}\}}(\phi_{0}-\phi_{1})^{2}{\rm Re}\left(e^{-\sqrt{-1}\hat{\theta}}(\omega+\sqrt{-1}\alpha_{\phi_{0}})^{n}\right),\right. \\
& \left.\int_{\{\phi_{1}>\phi_{0}\}}(\phi_{1}-\phi_{0})^{2}{\rm Re}\left(e^{-\sqrt{-1}\hat{\theta}}(\omega+\sqrt{-1}\alpha_{\phi_{1}})^{n}\right)\right).
\end{split}
\]
In particular if $\phi_0 \ne \phi_1\in \mathcal{H}$ are distinct, then $d(\phi_0,\phi_1) >0$.
\end{proposition}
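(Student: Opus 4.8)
The plan is to obtain the estimate by combining two facts already established: the lower bound for the (approximate) speed $E^{\epsilon}(t) = \|\dot\phi^{\epsilon}(t)\|^2$ proved in Lemma~\ref{Estimates for E}~(ii), and the identification of $d(\phi_0,\phi_1)^2$ as the $\epsilon\to 0$ limit of $E^{\epsilon}(t)$ provided by Corollary~\ref{cor: constSpeed}. (Alternatively one can bypass the Corollary and argue directly from Proposition~\ref{distance-geodesic}: since $|dE^{\epsilon}/dt|\le C\epsilon^2$ by Lemma~\ref{Estimates for E}~(i), the lower bound at $t=0$ propagates to all $t$, and then $d(\phi_0,\phi_1)=\lim_{\epsilon\to0}\int_0^1\sqrt{E^{\epsilon}(t)}\,dt \ge \lim_{\epsilon\to0}\sqrt{\min_{t}E^{\epsilon}(t)}$. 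Both routes give the same conclusion; I will write up the first.)

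Concretely, let $\phi^{\epsilon}(t)$, $t\in[0,1]$, be the $\epsilon$-geodesic from $\phi_0$ to $\phi_1$ and set $E^{\epsilon}(t)=\int_X(\dot\phi^{\epsilon}(t))^2{\rm Re}\big(e^{-\sqrt{-1}\hat\theta}\Omega_{\phi^{\epsilon}}^n\big)$ as in Corollary~\ref{cor: constSpeed}. By that corollary, $\lim_{\epsilon\to0}E^{\epsilon}(t)=d(\phi_0,\phi_1)^2$ for every fixed $t\in[0,1]$, so in particular for $t=0$. On the other hand, Lemma~\ref{Estimates for E}~(ii) gives a constant $C$ independent of $\epsilon$ with
\begin{align*}
E^{\epsilon}(0) \geq {} & \max\Big\{\int_{\{\phi_0>\phi_1\}}(\phi_0-\phi_1)^2{\rm Re}\big(e^{-\sqrt{-1}\hat\theta}\Omega_{\phi_0}^n\big),\\
& \qquad\quad \int_{\{\phi_1>\phi_0\}}(\phi_1-\phi_0)^2{\rm Re}\big(e^{-\sqrt{-1}\hat\theta}\Omega_{\phi_1}^n\big)\Big\}-C\epsilon^2 .
\end{align*}
Letting $\epsilon\to0$ and using the previous sentence yields exactly the asserted lower bound for $d(\phi_0,\phi_1)^2$.

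For the final assertion, recall that $\phi_0,\phi_1\in\mathcal{H}$ means that ${\rm Re}\big(e^{-\sqrt{-1}\hat\theta}\Omega_{\phi_0}^n\big)$ and ${\rm Re}\big(e^{-\sqrt{-1}\hat\theta}\Omega_{\phi_1}^n\big)$ are strictly positive volume forms on $X$. If $\phi_0\neq\phi_1$, then after relabelling we may assume the open set $\{\phi_0>\phi_1\}$ is non-empty, hence of positive measure, so $\int_{\{\phi_0>\phi_1\}}(\phi_0-\phi_1)^2{\rm Re}\big(e^{-\sqrt{-1}\hat\theta}\Omega_{\phi_0}^n\big)>0$, and the displayed inequality forces $d(\phi_0,\phi_1)>0$. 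I do not anticipate a real obstacle here, since all inputs are in place; the only point needing a word of care is that Corollary~\ref{cor: constSpeed} is applied with the genuine limit $\epsilon\to0$ rather than along a subsequence, which is legitimate because the statement of that corollary asserts convergence of the entire family $E^{\epsilon}(t)$.
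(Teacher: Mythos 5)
Your proof is correct and uses the same key ingredients (Lemma~\ref{Estimates for E}~(ii) and Proposition~\ref{distance-geodesic}) as the paper; the only packaging difference is that you invoke Corollary~\ref{cor: constSpeed} to identify $\lim_{\epsilon\to0}E^{\epsilon}(0)=d(\phi_0,\phi_1)^2$ at $t=0$, whereas the paper squares the length integral $\left(\int_0^1\sqrt{E^{\epsilon}(t)}\,dt\right)^2$ and passes to the limit via Proposition~\ref{distance-geodesic} directly. Since Corollary~\ref{cor: constSpeed} is itself derived from those two results, the two write-ups are essentially the same argument.
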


\begin{proof}
Let $\phi^{\epsilon}$ be the $\epsilon$-geodesic joining $\phi_{0}, \phi_{1}$. By Lemma \ref{Estimates for E}, we have
\[
\begin{split}
\left({\rm length}(\phi^{\epsilon})\right)^{2}
= {} & \left(\int_{0}^{1}\sqrt{E^{\epsilon}(t)}dt\right)^{2} \\
\geq {} &\max \left(
\int_{\{\phi_{0}>\phi_{1}\}}(\phi_{0}-\phi_{1})^{2}
{\rm Re}\left(e^{-\sqrt{-1}\hat{\theta}}(\omega+\sqrt{-1}\alpha_{\phi_{0}})^{n}\right),\right. \\
& \left.\int_{\{\phi_{1}>\phi_{0}\}}(\phi_{1}-\phi_{0})^{2}
{\rm Re}\left(e^{-\sqrt{-1}\hat{\theta}}(\omega+\sqrt{-1}\alpha_{\phi_{1}})^{n}\right)\right)-C\epsilon^{2}.
\end{split}
\]
Combining this with Proposition \ref{distance-geodesic} and letting $\epsilon\rightarrow0$, we obtain the proposition.
\end{proof}


Now we are in a position to prove Theorem \ref{Metric structure}.

\begin{proof}[Proof of Theorem \ref{Metric structure}]
By Proposition \ref{distance-geodesic}, it suffices to prove $(\mathcal{H},d)$ is a metric space. The positivity of $d$ is a consequence of Proposition \ref{positivity}. The triangle inequality follows from Lemma \ref{triangle} and Proposition \ref{distance-geodesic}.  It remains only to prove the differentiability.  Fix a point $\phi_0 \in \mathcal{H}$ and let $\psi(t)$ be a smooth curve in $\mathcal{H}$ with $\psi(0) \ne \phi_0$.  For each $t$ let $\phi^{\epsilon}(s,t)$, $s\in[0,1]$ be an $\epsilon$-geodesic from $\psi(t)$ to $\phi_0$.  In the proof of Lemma~\ref{triangle} we proved the following result for $\ell_{\epsilon}(t) = {\rm length}(\phi^{\epsilon}(s,t))$ (see ~\eqref{eq: diffDist})
\begin{equation}\label{eq: distDiff2}
\begin{aligned}
\ell_{\epsilon}'(t) &=- \frac{\int_{X}\phi^{\epsilon}_s(0,t) \psi_t(t) {\rm Re}(e^{-\sqrt{-1}\hat{\theta}}\Omega_{\psi}^n)}{\left(\int_{X} \phi^{\epsilon}_s(0,t)^2  {\rm Re}(e^{-\sqrt{-1}\hat{\theta}}\Omega_{\psi}^n)\right)^{\frac{1}{2}}}\\
&- \frac{1}{2}\int_{0}^{1} (E^{\epsilon})^{-3/2} \de_s E^{\epsilon} \int_{X}\phi^{\epsilon}_s\phi^{\epsilon}_t{\rm Re}(e^{-\sqrt{-1}\hat{\theta}}\Omega_{\phi^{\epsilon}}^n) ds\\
& -\int_{0}^{1}\frac{1}{2E^{\epsilon}(s,t)^{\frac{1}{2}}} e^{-2s} \epsilon^2 \int_{X}\phi^{\epsilon}_t {\rm Im}\left(e^{-\sqrt{-1}\hat{\theta}}\Omega_{\phi^{\epsilon}}^n\right) ds.
\end{aligned}
\end{equation}
As in the proof of Lemma~\ref{triangle} the maximum principle gives the estimate $|\phi^{\epsilon}_{t}(s,t)| \leq C$ for a uniform constant $C$ independent of $\epsilon$.  Moreover, by Lemma~\ref{Estimates for E} (i) we have
\[
|\de_sE| \leq C \epsilon^2
\]
for a uniform constant $C$.  By Corollary~\ref{cor: constSpeed} $E^{\epsilon}(s,t) \rightarrow d(\phi_0, \psi(t)) >0$ (for $|t|$ sufficiently small) as $\epsilon \rightarrow 0$.  Thus, by the uniform estimates for $\epsilon$-geodesics from Theorem~\ref{thm: CY} we have
\[
\begin{aligned}
\bigg|  \frac{1}{2}\int_{0}^{1} (E^{\epsilon})^{-3/2} \de_s E^{\epsilon} \int_{X}\phi^{\epsilon}_s\phi^{\epsilon}_t{\rm Re}(e^{-\sqrt{-1}\hat{\theta}}\Omega_{\phi^{\epsilon}}^n) ds\bigg| &\leq C\epsilon^2\\
\bigg|\int_{0}^{1}\frac{1}{2E^{\epsilon}(s,t)^{\frac{1}{2}}} e^{-2s} \epsilon^2 \int_{X}\phi^{\epsilon}_t {\rm Im}\left(e^{-\sqrt{-1}\hat{\theta}}\Omega_{\phi^{\epsilon}}^n\right) ds\bigg| &\leq C\epsilon^2
\end{aligned}
\]
Integrating~\eqref{eq: distDiff2} from $0$ to $0<t_0 \ll1$ and using the above estimates yields
\[
\frac{1}{t_0} \ell_{\epsilon}(t_0)=- \frac{1}{t_0}\int_{0}^{t_{0}}\frac{\int_{X}\phi^{\epsilon}_s(0,t) \psi_t(t) {\rm Re}(e^{-\sqrt{-1}\hat{\theta}}\Omega_{\psi}^n)}{\left(\int_{X} \phi^{\epsilon}_s(0,t)^2  {\rm Re}(e^{-\sqrt{-1}\hat{\theta}}\Omega_{\psi}^n)\right)^{\frac{1}{2}}}dt - C\epsilon^2
\]
Taking the limit as $\epsilon\rightarrow 0$, using Proposition~\ref{distance-geodesic} and Theorem~\ref{thm: CY} yields
\[
\frac{1}{t_0} d(\psi(t_0), \phi_0) = - \frac{1}{t_0}\int_{0}^{t_0}\frac{\int_{X}\phi_s(0,t) \psi_t(t) {\rm Re}(e^{-\sqrt{-1}\hat{\theta}}\Omega_{\psi}^n)}{\left(\int_{X} \phi_s(0,t)^2  {\rm Re}(e^{-\sqrt{-1}\hat{\theta}}\Omega_{\psi}^n)\right)^{\frac{1}{2}}} dt
\]
where $\phi(s,t)$ is the $C^{1,\alpha}$ geodesic from $\psi(t)$ to $\phi_0$.  Taking the limit as $t_0\rightarrow 0$ yields the result
\begin{equation}\label{eq: distanceDer}
\frac{d}{dt}\big|_{t=0} d(\phi_0, \psi(t)) = \frac{\int_{X}\phi_s(0) \psi_t(t) {\rm Re}(e^{-\sqrt{-1}\hat{\theta}}\Omega_{\psi}^n)}{\left(\int_{X} \phi_s(0)^2  {\rm Re}(e^{-\sqrt{-1}\hat{\theta}}\Omega_{\psi}^n)\right)^{\frac{1}{2}}} 
\end{equation}
where $\phi(s)$ is the geodesic from $\psi(0)$ to $\phi_0$.
\end{proof}

\section{Levi-Civita connection and curvature}\label{Connection and curvature section}

\subsection{Levi-Civita connection}
First, let us recall the Riemannian structure, for any $\phi\in\mathcal{H}$ and $\psi_{1},\psi_{2}\in C^{\infty}(X)$, we have
\[
\langle\psi_{1},\psi_{2}\rangle_{\phi}
= \int_{X}\psi_{1}\psi_{2}{\rm Re}\left(e^{-\sqrt{-1}\hat{\theta}}(\omega+\sqrt{-1}\alpha_{\phi})^{n}\right).
\]
We hope to define a connection $\nabla$ which is compatible with the above Riemannian structure, i.e., for a smooth path $\phi(t)\in\mathcal{H}$, we have
\[
\frac{\de}{\de t}\langle\psi_{1},\psi_{2}\rangle_{\phi}
= \langle\nabla_{\dot{\phi}}\psi_{1},\psi_{2}\rangle_{\phi}
+\langle\psi_{1},\nabla_{\dot{\phi}}\psi_{2}\rangle_{\phi}.
\]
We compute (using again the notation of Definition~\ref{defn: notation})
\begin{equation}\label{Connection eqn 1}
\begin{split}
\frac{\de}{\de t}\langle\psi_{1},\psi_{2}\rangle_{\phi}
= {} & \int_{X}(\dot{\psi}_{1}\psi_{2}+\psi_{1}\dot{\psi}_{2})
{\rm Re}\left(e^{-\sqrt{-1}\hat{\theta}}\Omega_{\phi}^{n}\right) \\
& +\int_{X}\psi_{1}\psi_{2}{\rm Re}\left(n\sqrt{-1}e^{-\sqrt{-1}\hat{\theta}}\Omega_{\phi}^{n-1}\wedge\ddbar\dot{\phi}\right).
\end{split}
\end{equation}
To deal with the second term on the right hand side, we observe
\[
\begin{split}
& \int_{X}(\psi_{1}\psi_{2})\wedge\ddbar\dot{\phi}\wedge\Omega_{\phi}^{n-1}\\
&= -\frac{1}{2}\int_{X}\left(\sqrt{-1}\de(\psi_{1}\psi_{2})\wedge\dbar\dot{\phi}+\sqrt{-1}\de\dot{\phi}\wedge\dbar(\psi_{1}\psi_{2})\right)
\wedge\Omega_{\phi}^{n-1},
\end{split}
\]
and the term
\[
\left(\sqrt{-1}\de(\psi_{1}\psi_{2})\wedge\dbar\dot{\phi}+\sqrt{-1}\de\dot{\phi}\wedge\dbar(\psi_{1}\psi_{2})\right)
\]
is real. Thus,
\[
\begin{split}
& \int_{X}\psi_{1}\psi_{2}{\rm Re}\left(n\sqrt{-1}e^{-\sqrt{-1}\hat{\theta}}\Omega_{\phi}^{n-1}\wedge\ddbar\dot{\phi}\right) \\
= {} & {\rm Re}\left(n\sqrt{-1}e^{-\sqrt{-1}\hat{\theta}} \int_{X}(\psi_{1}\psi_{2})\wedge\ddbar\dot{\phi}\wedge\Omega_{\phi}^{n-1}\right) \\
= {} & -{\rm Im}\left(ne^{-\sqrt{-1}\hat{\theta}} \int_{X}(\psi_{1}\psi_{2})\wedge\ddbar\dot{\phi}\wedge\Omega_\phi^{n-1}\right) \\
= {} & \frac{n}{2}{\rm Im}\left(e^{-\sqrt{-1}\hat{\theta}}\int_{X}\left[\sqrt{-1}\de(\psi_{1}\psi_{2})\wedge\dbar\dot{\phi}+\sqrt{-1}\de\dot{\phi}\wedge\dbar(\psi_{1}\psi_{2})\right]
\wedge\Omega_{\phi}^{n-1}\right) \\
= {} & \frac{n}{2}\int_{X}\left[\sqrt{-1}\de(\psi_{1}\psi_{2})\wedge\dbar\dot{\phi}+\sqrt{-1}\de\dot{\phi}\wedge\dbar(\psi_{1}\psi_{2})\right]
\wedge{\rm Im}\left(e^{-\sqrt{-1}\hat{\theta}}\Omega_{\phi}^{n-1}\right) \\
= {} & \frac{n}{2}\int_{X}\psi_{2}\left(\sqrt{-1}\de\psi_{1}\wedge\dbar\dot{\phi}+\sqrt{-1}\de\dot{\phi}\wedge\dbar\psi_{1}\right)
\wedge{\rm Im}\left(e^{-\sqrt{-1}\hat{\theta}}\Omega_{\phi}^{n-1}\right) \\
& +\frac{n}{2}\int_{X}\psi_{1}\left(\sqrt{-1}\de\psi_{2}\wedge\dbar\dot{\phi}+\sqrt{-1}\de\dot{\phi}\wedge\dbar\psi_{2}\right)
\wedge{\rm Im}\left(e^{-\sqrt{-1}\hat{\theta}}\Omega_{\phi}^{n-1}\right).
\end{split}
\]
Substituting this into (\ref{Connection eqn 1}), we see that
\[
\frac{\de}{\de t}\langle\psi_{1},\psi_{2}\rangle_{\phi}
= \langle \dot{\psi}_{1}+Q(\nabla\dot{\phi},\nabla\psi_{1}), \psi_{2} \rangle_{\phi}
+ \langle \psi_{1}, \dot{\psi}_{2}+Q(\nabla\dot{\phi},\nabla\psi_{2}) \rangle_{\phi},
\]
where
\[
Q(\nabla\psi,\nabla\eta) :=
\frac{n}{2}
\frac{
(\sqrt{-1}\de\psi\wedge\dbar\eta+\sqrt{-1}\de\eta\wedge\dbar\psi)
\wedge{\rm Im}\left(e^{-\sqrt{-1}\hat{\theta}}(\omega+\sqrt{-1}\alpha_{\phi})^{n-1}\right)}
{{\rm Re}\left(e^{-\sqrt{-1}\hat{\theta}}(\omega+\sqrt{-1}\alpha_{\phi})^{n}\right)},
\]
for $\psi,\eta\in C^{\infty}(X)$. Then we have the following definition.
\begin{definition}\label{Levi-Civita connection}
For a smooth path $\phi\in\mathcal{H}$ and $\psi\in C^{\infty}(X)$, the Levi-Civita connection is defined by
\[
\nabla_{\dot{\phi}}\psi
= \dot{\psi}+Q(\nabla\dot{\phi},\nabla\psi).
\]
\end{definition}

\subsection{Curvature Operator}
Let $\phi$ be a smooth map from $[0,1]\times[0,1]$ to $\mathcal{H}$ and $\eta\in C^{\infty}(X)$. We write $\phi_{t}=\frac{\de\phi}{\de t}$, $\phi_{s}=\frac{\de\phi}{\de s}$ and $\phi_{st}=\phi_{ts}=\frac{\de^{2}\phi}{\de s\de t}$. In this subsection, we aim to obtain an explicit expression of $R(\phi_{t},\phi_{s})\eta$. 
By the definition of Levi-Civita connection $\nabla$ (see Definition \ref{Levi-Civita connection}), we have
\[
\begin{split}
R(\phi_{t},\phi_{s})\eta = {} & \nabla_{\phi_{t}}\nabla_{\phi_{s}}\eta-\nabla_{\phi_{s}}\nabla_{\phi_{t}}\eta \\
= {} & \nabla_{\phi_{t}}\left(\eta_{s}+Q(\nabla\phi_{s},\nabla\eta)\right)
-\nabla_{\phi_{s}}\left(\eta_{t}+Q(\nabla\phi_{t},\nabla\eta)\right) \\
= {} & \eta_{st}+Q(\nabla\phi_{t},\nabla\eta_{s})+\frac{\de}{\de t}Q(\nabla\phi_{s},\nabla\eta)
+Q(\nabla\phi_{t},\nabla Q(\nabla\phi_{s},\nabla\eta)) \\
& -\eta_{ts}-Q(\nabla\phi_{s},\nabla\eta_{t})-\frac{\de}{\de s}Q(\nabla\phi_{t},\nabla\eta)
-Q(\nabla\phi_{s},\nabla Q(\nabla\phi_{t},\nabla\eta)).
\end{split}
\]
We compute
\[
\begin{split}
\frac{\de}{\de t}Q(\nabla\phi_{s},\nabla\eta)
= {} & \frac{\de}{\de t}\left[\frac{n}{2}
\frac{
(\sqrt{-1}\de\phi_{s}\wedge\dbar\eta+\sqrt{-1}\de\eta\wedge\dbar\phi_{s})
\wedge{\rm Im}\left(e^{-\sqrt{-1}\hat{\theta}}\Omega_{\phi}^{n-1}\right)}
{{\rm Re}\left(e^{-\sqrt{-1}\hat{\theta}}\Omega_{\phi}^{n}\right)}\right] \\
= {} & Q(\nabla\phi_{st},\nabla\eta)+Q(\nabla\phi_{s},\nabla\eta_{t}) \\
& +\frac{n}{2}(\sqrt{-1}\de\phi_{s}\wedge\dbar\eta+\sqrt{-1}\de\eta\wedge\dbar\phi_{s})
\wedge\frac{\de}{\de t}\frac{{\rm Im}\left(e^{-\sqrt{-1}\hat{\theta}}\Omega_{\phi}^{n-1}\right)}
{{\rm Re}\left(e^{-\sqrt{-1}\hat{\theta}}\Omega_{\phi}^{n}\right)}.
\end{split}
\]
Since
\[
\begin{split}
 \frac{\de}{\de t}{\rm Im}\left(e^{-\sqrt{-1}\hat{\theta}}\Omega_{\phi}^{n-1}\right) ={} & {\rm Im}\left((n-1)\sqrt{-1}e^{-\sqrt{-1}\hat{\theta}}\Omega_{\phi}^{n-2}\wedge\ddbar\phi_{t}\right) \\
= {} & (n-1)\ddbar\phi_{t}\wedge{\rm Re}\left(e^{-\sqrt{-1}\hat{\theta}}\Omega_{\phi}^{n-2}\right)
\end{split}
\]
and
\[
\begin{split}
 \frac{\de}{\de t}{\rm Re}\left(e^{-\sqrt{-1}\hat{\theta}}\Omega_{\phi}^{n}\right) = {} & {\rm Re}\left(n\sqrt{-1}e^{-\sqrt{-1}\hat{\theta}}\Omega_{\phi}^{n-1}\wedge\ddbar\phi_{t}\right) \\
= {} & -n\ddbar\phi_{t}\wedge
{\rm Im}\left(e^{-\sqrt{-1}\hat{\theta}}\Omega_{\phi}^{n-1}\right),
\end{split}
\]
we have
\[
\begin{split}
& \frac{\de}{\de t}Q(\nabla\phi_{s},\nabla\eta) \\[2mm]
= {} & Q(\nabla\phi_{st},\nabla\eta)+Q(\nabla\phi_{s},\nabla\eta_{t}) \\
& +\frac{n}{2}(\sqrt{-1}\de\phi_{s}\wedge\dbar\eta+\sqrt{-1}\de\eta\wedge\dbar\phi_{s})
\wedge\frac{(n-1)\ddbar\phi_{t}\wedge{\rm Re}\left(e^{-\sqrt{-1}\hat{\theta}}\Omega_{\phi}^{n-2}\right)}
{{\rm Re}\left(e^{-\sqrt{-1}\hat{\theta}}\Omega_{\phi}^{n}\right)} \\
& -\frac{n}{2}(\sqrt{-1}\de\phi_{s}\wedge\dbar\eta+\sqrt{-1}\de\eta\wedge\dbar\phi_{s})
\wedge\frac{{\rm Im}\left(e^{-\sqrt{-1}\hat{\theta}}\Omega_{\phi}^{n-1}\right)}
{{\rm Re}\left(e^{-\sqrt{-1}\hat{\theta}}\Omega_{\phi}^{n}\right)}
\cdot\frac{\de_{t}{\rm Re}\left(e^{-\sqrt{-1}\hat{\theta}}\Omega_{\phi}^{n}\right)}{{\rm Re}\left(e^{-\sqrt{-1}\hat{\theta}}\Omega_{\phi}^{n}\right)} \\
= {} & Q(\nabla\phi_{st},\nabla\eta)+Q(\nabla\phi_{s},\nabla\eta_{t}) \\
& +\frac{n}{2}(\sqrt{-1}\de\phi_{s}\wedge\dbar\eta+\sqrt{-1}\de\eta\wedge\dbar\phi_{s})
\wedge\frac{(n-1)\ddbar\phi_{t}\wedge{\rm Re}\left(e^{-\sqrt{-1}\hat{\theta}}\Omega_{\phi}^{n-2}\right)}
{{\rm Re}\left(e^{-\sqrt{-1}\hat{\theta}}\Omega_{\phi}^{n}\right)} \\
& +Q(\nabla\phi_{s},\nabla\eta)
\frac{n\ddbar\phi_{t}\wedge{\rm Im}\left(e^{-\sqrt{-1}\hat{\theta}}\Omega_{\phi}^{n-1}\right)}
{{\rm Re}\left(e^{-\sqrt{-1}\hat{\theta}}\Omega_{\phi}^{n}\right)}.
\end{split}
\]
For $\frac{\de}{\de s}Q(\nabla\phi_{t},\nabla\eta)$, switching $t$ and $s$, we have similar expression:
\[
\begin{split}
& \frac{\de}{\de s}Q(\nabla\phi_{t},\nabla\eta) \\[2mm]
= {} & Q(\nabla\phi_{ts},\nabla\eta)+Q(\nabla\phi_{t},\nabla\eta_{s}) \\
& +\frac{n}{2}(\sqrt{-1}\de\phi_{t}\wedge\dbar\eta+\sqrt{-1}\de\eta\wedge\dbar\phi_{t})
\wedge\frac{(n-1)\ddbar\phi_{s}\wedge{\rm Re}\left(e^{-\sqrt{-1}\hat{\theta}}\Omega_{\phi}^{n-2}\right)}
{{\rm Re}\left(e^{-\sqrt{-1}\hat{\theta}}\Omega_{\phi}^{n}\right)} \\
& +Q(\nabla\phi_{t},\nabla\eta)
\frac{n\ddbar\phi_{s}\wedge{\rm Im}\left(e^{-\sqrt{-1}\hat{\theta}}\Omega_{\phi}^{n-1}\right)}
{{\rm Re}\left(e^{-\sqrt{-1}\hat{\theta}}\Omega_{\phi}^{n}\right)}.
\end{split}
\]
Combining the above equations, we obtain the expression of $R(\phi_{t},\phi_{s})\eta$:
\begin{equation}\label{Expression of R 1}
\begin{split}
& R(\phi_{t},\phi_{s})\eta \\
= {} & \frac{n(n-1)}{2}(\sqrt{-1}\de\phi_{s}\wedge\dbar\eta+\sqrt{-1}\de\eta\wedge\dbar\phi_{s})
\wedge\frac{\ddbar\phi_{t}\wedge{\rm Re}\left(e^{-\sqrt{-1}\hat{\theta}}\Omega_{\phi}^{n-2}\right)}
{{\rm Re}\left(e^{-\sqrt{-1}\hat{\theta}}\Omega_{\phi}^{n}\right)} \\
& -\frac{n(n-1)}{2}(\sqrt{-1}\de\phi_{t}\wedge\dbar\eta+\sqrt{-1}\de\eta\wedge\dbar\phi_{t})
\wedge\frac{\ddbar\phi_{s}\wedge{\rm Re}\left(e^{-\sqrt{-1}\hat{\theta}}\Omega_{\phi}^{n-2}\right)}
{{\rm Re}\left(e^{-\sqrt{-1}\hat{\theta}}\Omega_{\phi}^{n}\right)} \\
& +Q(\nabla\phi_{s},\nabla\eta)
\frac{n\ddbar\phi_{t}\wedge{\rm Im}\left(e^{-\sqrt{-1}\hat{\theta}}\Omega_{\phi}^{n-1}\right)}
{{\rm Re}\left(e^{-\sqrt{-1}\hat{\theta}}\Omega_{\phi}^{n}\right)} \\
& -Q(\nabla\phi_{t},\nabla\eta)
\frac{n\ddbar\phi_{s}\wedge{\rm Im}\left(e^{-\sqrt{-1}\hat{\theta}}\Omega_{\phi}^{n-1}\right)}
{{\rm Re}\left(e^{-\sqrt{-1}\hat{\theta}}\Omega_{\phi}^{n}\right)} \\
& +Q(\nabla\phi_{t},\nabla Q(\nabla\phi_{s},\nabla\eta))
-Q(\nabla\phi_{s},\nabla Q(\nabla\phi_{t},\nabla\eta)).
\end{split}
\end{equation}

\subsection{Sectional Curvature}
In this subsection, we consider the sectional curvature:
\[
K(\phi_{t},\phi_{s}) = \frac{\langle R(\phi_{t},\phi_{s})\phi_{s},\phi_{t}\rangle}
{\langle\phi_{t},\phi_{t}\rangle \langle\phi_{s},\phi_{s}\rangle-\langle\phi_{t},\phi_{s}\rangle^{2}}.
\]
We focus on the numerator $\langle R(\phi_{t},\phi_{s})\phi_{s},\phi_{t}\rangle$. By (\ref{Expression of R 1}), we compute
\[
\begin{split}
& \langle R(\phi_{t},\phi_{s})\phi_{s},\phi_{t}\rangle \\
= {} & \frac{n(n-1)}{2}\int_{X}\phi_{t}\left[\sqrt{-1}\de\phi_{s}\wedge\dbar\phi_{s}+\sqrt{-1}\de\phi_{s}\wedge\dbar\phi_{s}\right]
\wedge\ddbar\phi_{t}\wedge{\rm Re}(e^{\sqrt{-1}\hat{\theta}}\Omega_{\phi}^{n-2}) \\
& -\frac{n(n-1)}{2}\int_{X}\phi_{t}\left[\sqrt{-1}\de\phi_{t}\wedge\dbar\phi_{s}+\sqrt{-1}\de\phi_{s}\wedge\dbar\phi_{t}\right]
\wedge\ddbar\phi_{s}\wedge{\rm Re}(e^{\sqrt{-1}\hat{\theta}}\Omega_{\phi}^{n-2}) \\
& +n\int_{X}\phi_{t}Q(\nabla\phi_{s},\nabla\phi_{s})\ddbar\phi_{t}\wedge{\rm Im}(e^{\sqrt{-1}\hat{\theta}}\Omega_{\phi}^{n-1}) \\
& -n\int_{X}\phi_{t}Q(\nabla\phi_{t},\nabla\phi_{s})\ddbar\phi_{s}\wedge{\rm Im}(e^{\sqrt{-1}\hat{\theta}}\Omega_{\phi}^{n-1}) \\
& +\frac{n}{2}\int_{X}\phi_{t}\left[\sqrt{-1}\de\phi_{t}\wedge\dbar Q(\nabla\phi_{s},\nabla\phi_{s})
+\sqrt{-1}\de Q(\nabla\phi_{s},\nabla\phi_{s})\wedge\dbar\phi_{t}\right]\wedge{\rm Im}(e^{\sqrt{-1}\hat{\theta}}\Omega_{\phi}^{n-1}) \\
& -\frac{n}{2}\int_{X}\phi_{t}\left[\sqrt{-1}\de\phi_{s}\wedge\dbar Q(\nabla\phi_{t},\nabla\phi_{s})
+\sqrt{-1}\de Q(\nabla\phi_{t},\nabla\phi_{s})\wedge\dbar\phi_{s}\right]\wedge{\rm Im}(e^{\sqrt{-1}\hat{\theta}}\Omega_{\phi}^{n-1}),
\end{split}
\]
where we used the definition of $Q$ for the last two terms. For convenience, we denote the terms on the right hand side by $I_{i}$ for $i=1,2,3,4,5,6$.

For $I_{1}$ and $I_{2}$, we have
\[
\begin{split}
\frac{2I_{1}}{n(n-1)}
= {} & \int_{X}\phi_{t}\sqrt{-1}\de\phi_{s}\wedge\dbar\phi_{s}\wedge\ddbar\phi_{t}\wedge{\rm Re}(e^{\sqrt{-1}\hat{\theta}}\Omega_{\phi}^{n-2}) \\
& +\int_{X}\phi_{t}\sqrt{-1}\de\phi_{s}\wedge\dbar\phi_{s}\wedge\ddbar\phi_{t}\wedge{\rm Re}(e^{\sqrt{-1}\hat{\theta}}\Omega_{\phi}^{n-2}) \\
= {} & (\sqrt{-1})^{2}\int_{X}\dbar(\phi_{t}\de\phi_{s}\wedge\dbar\phi_{s})\wedge\de\phi_{t}\wedge
{\rm Re}(e^{\sqrt{-1}\hat{\theta}}\Omega_{\phi}^{n-2}) \\
& -(\sqrt{-1})^{2}\int_{X}\de(\phi_{t}\de\phi_{s}\wedge\dbar\phi_{s})\wedge\dbar\phi_{t}\wedge
{\rm Re}(e^{\sqrt{-1}\hat{\theta}}\Omega_{\phi}^{n-2}) \\
= {} & -\int_{X}\sqrt{-1}\de\phi_{t}\wedge\dbar\phi_{t}\wedge\sqrt{-1}\de\phi_{s}\wedge\dbar\phi_{s}\wedge
{\rm Re}(e^{\sqrt{-1}\hat{\theta}}\Omega_{\phi}^{n-2}) \\
& +\int_{X}\sqrt{-1}\de\phi_{t}\wedge\dbar\phi_{s}\wedge\ddbar\phi_{s}\wedge{\rm Re}(e^{\sqrt{-1}\hat{\theta}}\Omega_{\phi}^{n-2}) \\
& -\int_{X}\sqrt{-1}\de\phi_{t}\wedge\dbar\phi_{t}\wedge\sqrt{-1}\de\phi_{s}\wedge\dbar\phi_{s}\wedge
{\rm Re}(e^{\sqrt{-1}\hat{\theta}}\Omega_{\phi}^{n-2}) \\
& +\int_{X}\sqrt{-1}\de\phi_{s}\wedge\dbar\phi_{t}\wedge\ddbar\phi_{s}\wedge{\rm Re}(e^{\sqrt{-1}\hat{\theta}}\Omega_{\phi}^{n-2}),
\end{split}
\]
which implies
\[
I_{1}+I_{2}
= {} -n(n-1)\int_{X}\sqrt{-1}\de\phi_{t}\wedge\dbar\phi_{t}\wedge\sqrt{-1}\de\phi_{s}\wedge\dbar\phi_{s}\wedge{\rm Re}(e^{\sqrt{-1}\hat{\theta}}\Omega_{\phi}^{n-2}).
\]
For $I_{3}$ and $I_{5}$, we have
\[
\begin{split}
I_{3} = {} & n\int_{X}\phi_{t}Q(\nabla\phi_{s},\nabla\phi_{s})\ddbar\phi_{t}\wedge
{\rm Im}(e^{\sqrt{-1}\hat{\theta}}\Omega_{\phi}^{n-1}) \\
= {} & \frac{n}{2}\int_{X}\phi_{t}Q(\nabla\phi_{s},\nabla\phi_{s})\ddbar\phi_{t}\wedge
{\rm Im}(e^{\sqrt{-1}\hat{\theta}}\Omega_{\phi}^{n-1}) \\
& -\frac{n}{2}\int_{X}\phi_{t}Q(\nabla\phi_{s},\nabla\phi_{s})\sqrt{-1}~\dbar\de\phi_{t}\wedge
{\rm Im}(e^{\sqrt{-1}\hat{\theta}}\Omega_{\phi}^{n-1}) \\
= {} & -\frac{n}{2}\int_{X}Q(\nabla\phi_{s},\nabla\phi_{s})\sqrt{-1}\de\phi_{t} \wedge\dbar\phi_{t}\wedge{\rm Im}(e^{\sqrt{-1}\hat{\theta}}\Omega_{\phi}^{n-1}) \\
& -\frac{n}{2}\int_{X}\phi_{t}\sqrt{-1}\de Q(\nabla\phi_{s},\nabla\phi_{s})\wedge\dbar\phi_{t}\wedge
{\rm Im}(e^{\sqrt{-1}\hat{\theta}}\Omega_{\phi}^{n-1}) \\
& -\frac{n}{2}\int_{X}Q(\nabla\phi_{s},\nabla\phi_{s})\sqrt{-1}\de\phi_{t} \wedge\dbar\phi_{t}\wedge
{\rm Im}(e^{\sqrt{-1}\hat{\theta}}\Omega_{\phi}^{n-1}) \\
& -\frac{n}{2}\int_{X}\phi_{t}\sqrt{-1}\de\phi_{t}\wedge\dbar Q(\nabla\phi_{s},\nabla\phi_{s})\wedge{\rm Im}(e^{\sqrt{-1}\hat{\theta}}\Omega_{\phi}^{n-1}),
\end{split}
\]
which implies
\[
\begin{split}
I_{3}+I_{5}
= {} & -n\int_{X}Q(\nabla\phi_{s},\nabla\phi_{s})\sqrt{-1}\de\phi_{t} \wedge\dbar\phi_{t}\wedge{\rm Im}(e^{\sqrt{-1}\hat{\theta}}\Omega_{\phi}^{n-1}) \\
= {} & -\int_{X}Q(\nabla\phi_{s},\nabla\phi_{s})Q(\nabla\phi_{t},\nabla\phi_{t}){\rm Re}(e^{\sqrt{-1}\hat{\theta}}\Omega_{\phi}^{n}),
\end{split}
\]
where we used the definition of $Q$:
\[
Q(\nabla\phi_{t},\nabla\phi_{t}) :=
\frac{n}{2}
\frac{
(\sqrt{-1}\de\phi_{t}\wedge\dbar\phi_{t}+\sqrt{-1}\de\phi_{t}\wedge\dbar\phi_{t})
\wedge{\rm Im}(e^{\sqrt{-1}\hat{\theta}}\Omega_{\phi}^{n-1})}{{\rm Re}(e^{\sqrt{-1}\hat{\theta}}\Omega_{\phi}^{n})}.
\]
Similarly, we have
\[
I_{4}+I_{6}
= \int_{X}Q(\nabla\phi_{t},\nabla\phi_{s})Q(\nabla\phi_{t},\nabla\phi_{s}){\rm Re}(e^{\sqrt{-1}\hat{\theta}}\Omega_{\phi}^{n}).
\]
Therefore, we obtain the expression of $\langle R(\phi_{t},\phi_{s})\phi_{s},\phi_{t}\rangle$:
\begin{equation}\label{Expression of R 2}
\begin{split}
& \langle R(\phi_{t},\phi_{s})\phi_{s},\phi_{t}\rangle \\
= {} & -n(n-1)\int_{X}\sqrt{-1}\de\phi_{t}\wedge\dbar\phi_{t}\wedge\sqrt{-1}\de\phi_{s}\wedge\dbar\phi_{s}\wedge{\rm Re}
\left(e^{-\sqrt{-1}\hat{\theta}}\Omega_{\phi}^{n-2}\right) \\
& -\int_{X}Q(\nabla\phi_{s},\nabla\phi_{s})Q(\nabla\phi_{t},\nabla\phi_{t}){\rm Re}\left(e^{-\sqrt{-1}\hat{\theta}}\Omega_{\phi}^{n}\right) \\
& +\int_{X}Q(\nabla\phi_{s},\nabla\phi_{t})Q(\nabla\phi_{s},\nabla\phi_{t}){\rm Re}\left(e^{-\sqrt{-1}\hat{\theta}}\Omega_{\phi}^{n}\right),
\end{split}
\end{equation}
where the definition of $Q$ is
\[
Q(\nabla\psi,\nabla\eta) :=
\frac{n}{2}
\frac{
(\sqrt{-1}\de\psi\wedge\dbar\eta+\sqrt{-1}\de\eta\wedge\dbar\psi)
\wedge{\rm Im}\left(e^{-\sqrt{-1}\hat{\theta}}\Omega_{\phi}^{n-1}\right)}
{{\rm Re}\left(e^{-\sqrt{-1}\hat{\theta}}\Omega_{\phi}^{n}\right)}.
\]

\subsection{Non-positivity of sectional curvature}
We claim that the sectional curvature is non-positive. For any $\phi\in\mathcal{H}$ and $\psi,\eta\in C^{\infty}(X)$, we define $\phi(t,s)=\phi+t\psi+s\eta$. Then $\phi_{t}=\psi$ and $\phi_{s}=\eta$. It suffices to show that
\[
R(\psi,\eta,\eta,\psi) = \langle R(\phi_{t},\phi_{s})\phi_{s},\phi_{t}\rangle
\]
is non-positive.  We will write each expression appearing on the right hand side of ~\eqref{Expression of R 2}) in local coordinates.  Fix a point $p$ and choose
coordinates so that $\omega_{i\bar{j}} = \delta_{i\bar{j}}$ and $(\alpha_{\phi})_{i\bar{j}} = \lambda_{i} \delta_{i\bar{j}}$.  We will use the notation
\[
r = \prod_{i=1}^{n} \sqrt{1+\lambda_i^2}, \qquad \Theta = \sum_{i=1}^{n}\arctan(\lambda_i),
\]
and write
\[
\widehat{(i_1\cdots i_{\ell})} = \bigwedge_{k\ne i_1,\ldots,i_{\ell}} \sqrt{-1}dz_k\wedge d\bar{z}_k.
\]
We begin with the first term of (\ref{Expression of R 2}). Expanding yields
\[
{\rm Re} \left(e^{-\sqrt{-1}\hat{\theta}}(\omega+\sqrt{-1}\alpha_{\phi})^{n-2}\right) = (n-2)!{\rm Re} \left(e^{-\sqrt{-1}\hat{\theta}}\sum_{i< j} \prod_{k\ne i,j} (1+\sqrt{-1}\lambda_k)\right) \widehat{(ij)}.
\]
Now write
\[
\begin{aligned}
\prod_{k\ne i,j} (1+\sqrt{-1}\lambda_k) &= \left(\prod_{k} (1+\sqrt{-1}\lambda_k)\right) \frac{(1-\sqrt{-1}\lambda_i)(1-\sqrt{-1}\lambda_j)}{(1+\lambda_i^2)(1+\lambda_j^2)} \\
&= re^{\sqrt{-1}\Theta} \frac{(1-\sqrt{-1}\lambda_i)(1-\sqrt{-1}\lambda_j)}{(1+\lambda_i^2)(1+\lambda_j^2)}.
\end{aligned}
\]
Therefore
\[
\begin{aligned}
&\left(e^{-\sqrt{-1}\hat{\theta}}\sum_{i< j} \prod_{k\ne i,j} (1+\sqrt{-1}\lambda_k)\right) \\
&= (r\cos(\Theta-\hat{\theta}) + \sqrt{-1}r \sin(\Theta-\hat{\theta}))\left(\sum_{i< j}\frac{1-\lambda_i\lambda_j-\sqrt{-1}(\lambda_i+\lambda_j)}{(1+\lambda_i^2)(1+\lambda_j^2)}\right).
\end{aligned}
\]
Taking the real part of this expression yields
\[
\begin{aligned}
{\rm Re} \left(e^{-\sqrt{-1}\hat{\theta}}(\omega+\sqrt{-1}\alpha_{\phi})^{n-2}\right)  &= (n-2)! r\cos(\Theta-\hat{\theta})\sum_{i< j} \frac{(1-\lambda_i\lambda_j)}{(1+\lambda_i^2)(1+\lambda_j^2)}\widehat{(ij)}\\
& +(n-2)!r\sin(\Theta-\hat{\theta})\sum_{i< j} \frac{(\lambda_i+\lambda_j)}{(1+\lambda_i^2)(1+\lambda_j^2)}\widehat{(ij)}.\\
\end{aligned}
\]
On the other hand, the coefficient of $ \sqrt{-1}dz_i\wedge d\bar{z}_i\wedge \sqrt{-1}dz_j\wedge d\bar{z}_j$ appearing in $\sqrt{-1}\de \psi \wedge\dbar \psi \wedge \sqrt{-1}\de \eta \wedge \dbar \eta$ is given by
\[
\begin{aligned}
&|\de_i\psi|^2|\de_j \eta|^2 + |\de_j\psi|^2|\de_i \eta|^2 -\de_j\psi \de_{\bar{j}}\eta \de_i\eta\de_{\bar{i}}\psi  -\de_i\psi \de_{\bar{i}}\eta \de_j\eta\de_{\bar{j}}\psi\\
&= |\de_i\psi|^2|\de_j \eta|^2 + |\de_j\psi|^2|\de_i \eta|^2-{\rm Re}(\de_j\psi \de_{\bar{j}}\eta \de_i\eta\de_{\bar{i}}\psi)  -{\rm Re}(\de_i\psi \de_{\bar{i}}\eta \de_j\eta\de_{\bar{j}}\psi).
\end{aligned}
\]

Thus, after dividing by ${\rm Re}(e^{-\sqrt{-1}\hat{\theta}}(\omega+\sqrt{-1}\alpha)^n)$ we arrive at an expression for the first term, suppressing the minus sign, and cancelling common factors of $n!$
\begin{equation}\label{eq: curvTerm1}
\begin{aligned}
&\frac{n(n-1)\sqrt{-1}\de\psi\wedge\dbar\psi\wedge\sqrt{-1}\de\eta\wedge\dbar\eta\wedge{\rm Re} \left(e^{-\sqrt{-1}\hat{\theta}}(\omega+\sqrt{-1}\alpha_{\phi})^{n-2}\right)}{{\rm Re}(e^{-\sqrt{-1}\hat{\theta}}(\omega+\sqrt{-1}\alpha)^n)} \\
&= \sum_{i\ne j} \frac{(1-\lambda_i\lambda_j) |\de_i\psi|^2|\de_j \eta|^2 }{(1+\lambda_i^2)(1+\lambda_j^2)}- \sum_{i\ne j} \frac{(1-\lambda_i\lambda_j){\rm Re}(\de_j\psi \de_{\bar{j}}\eta \de_i\eta\de_{\bar{i}}\psi )}{(1+\lambda_i^2)(1+\lambda_j^2)}\\
&\quad + \tan(\Theta-\hat{\theta})\sum_{i\ne j} \frac{(\lambda_i+\lambda_j)|\de_i\psi|^2|\de_j \eta|^2 }{(1+\lambda_i^2)(1+\lambda_j^2)}\\
&\quad - \tan(\Theta-\hat{\theta})\sum_{i\ne j} \frac{(\lambda_i+\lambda_j){\rm Re}(\de_j\psi \de_{\bar{j}}\eta \de_i\eta\de_{\bar{i}}\psi ) }{(1+\lambda_i^2)(1+\lambda_j^2)}.
\end{aligned}
\end{equation}
Before proceeding we will simplify this expression.  First observe that we can extend all sums over $i=j$, since the new terms cancel exactly in the top row, and the bottom row.  Next we observe that
\[
(\tan(\Theta-\hat{\theta})-\lambda_i)(\tan(\Theta-\hat{\theta})-\lambda_j) = \tan(\Theta-\hat{\theta})^2 -(\lambda_i+\lambda_j)\tan(\Theta-\hat{\theta})  + \lambda_i\lambda_j.
\]
Therefore, we can write the expression on the right hand side of~\eqref{eq: curvTerm1} as
\begin{equation}\label{eq: firstTerm}
\begin{aligned}
&(1+(\tan(\Theta-\hat{\theta}))^2) \sum_{ij}\frac{ |\de_i\psi|^2|\de_j \eta|^2 }{(1+\lambda_i^2)(1+\lambda_j^2)}\\
& - (1+(\tan(\Theta-\hat{\theta}))^2) \sum_{i,j}\frac{{\rm Re}(\de_j\psi \de_{\bar{j}}\eta \de_i\eta\de_{\bar{i}}\psi )}{(1+\lambda_i^2)(1+\lambda_j^2)}\\
&- \sum_{i,j}\frac{(\tan(\Theta-\hat{\theta}) - \lambda_i)(\tan(\Theta-\hat{\theta}) - \lambda_j)|\de_i\psi|^2|\de_j\eta|^2}{(1+\lambda_i^2)(1+\lambda_j^2)}\\
&+  \sum_{i,j}\frac{(\tan(\Theta-\hat{\theta}) - \lambda_i)(\tan(\Theta-\hat{\theta}) - \lambda_j){\rm Re}(\de_j\psi \de_{\bar{j}}\eta \de_i\eta\de_{\bar{i}}\psi )}{(1+\lambda_i^2)(1+\lambda_j^2)}.
\end{aligned}
\end{equation}
Next we consider the term $Q(\nabla \psi, \nabla \eta)$ appearing in~\eqref{Expression of R 2}.  Again we expand in coordinates
\[
\begin{aligned}
{\rm Im}\left(e^{-\sqrt{-1}\hat{\theta}}(\omega+\sqrt{-1}\alpha_{\phi})^{n-1}\right) &= (n-1)!{\rm Im} \left( e^{-\sqrt{-1}\hat{\theta}}\sum_{i} \prod_{k\ne i} (1+\sqrt{-1}\lambda_k)\right) \widehat{(i)} \\
&=(n-1)!{\rm Im} \left( re^{\sqrt{-1}(\Theta-\hat{\theta})}\sum_i\frac{(1-\sqrt{-1}\lambda_i)}{1+\lambda_i^2}\right) \widehat{(i)}\\
&= (n-1)!\sum_i \frac{(r\sin(\Theta-\hat{\theta}) - \lambda_ir\cos(\Theta-\hat{\theta}))}{1+\lambda_i^2} \widehat{(i)}.
\end{aligned}
\]
The $\sqrt{-1}dz_i\wedge d\bar{z}_i$ component of $(\sqrt{-1}\de\psi\wedge\dbar\eta+\sqrt{-1}\de\eta\wedge\dbar\psi)$ is given by
\[
\de_i\psi\de_{\bar{i}}\eta + \de_i\eta\de_{\bar{i}}\psi = 2{\rm Re}(\de_i\psi \de_{\bar{i}}\eta).
\]
Therefore
\[
Q(\nabla \psi, \nabla \eta) = \sum_i \frac{(\tan(\Theta-\hat{\theta}) - \lambda_i){\rm Re}(\de_i\psi\de_{\bar{i}}\eta )}{1+\lambda_i^2} .
\]
From this we obtain 
\begin{equation}\label{eq: secondTerm}
\begin{aligned}
&Q(\nabla \psi, \nabla \psi)Q(\nabla \eta, \nabla \eta) \\
&= \left(\sum_{i}\frac{(\tan(\Theta-\hat{\theta}) - \lambda_i)|\de_i\psi|^2}{1+\lambda_i^2}\right) \left(\sum_{j}\frac{(\tan(\Theta-\hat{\theta}) - \lambda_j)|\de_j\eta|^2}{1+\lambda_j^2}\right)\\
&= \sum_{i,j}\frac{(\tan(\Theta-\hat{\theta}) - \lambda_i)(\tan(\Theta-\hat{\theta})-\lambda_j)|\de_i\psi|^2|\de_j\eta|^2}{(1+\lambda_i^2)(1+\lambda_j^2)}.\\
\end{aligned}
\end{equation}
Similarly we have
\begin{equation}\label{eq: thirdTerm}
\begin{aligned}
Q(\nabla \psi, \nabla \eta)^2& = \sum_{i,j}\frac{(\tan(\Theta-\hat{\theta}) - \lambda_i)(\tan(\Theta-\hat{\theta})-\lambda_j){\rm Re}(\de_i\psi\de_{\bar{i}}\eta){\rm Re}(\de_j\psi\de_{\bar{j}}\eta ) }{(1+\lambda_i^2)(1+\lambda_j^2)}.\\
\end{aligned}
\end{equation}
Suppressing the integration and the volume form, we need to estimate~$-$\eqref{eq: firstTerm}~$-$\eqref{eq: secondTerm}~$+$~\eqref{eq: thirdTerm}. Note that~\eqref{eq: secondTerm} cancels exactly the term on the third line of~\eqref{eq: firstTerm}.  In order to proceed further, we note that
\begin{equation}\label{eq: redundant}
{\rm Re}(\de_j\psi \de_{\bar{j}}\eta \de_i\eta\de_{\bar{i}}\psi ) = {\rm Re}(\de_j\psi \de_{\bar{j}}\eta){\rm Re}(\de_i\psi \de_{\bar{i}}\eta) + {\rm Im}(\de_j\psi \de_{\bar{j}}\eta){\rm Im}(\de_i\psi \de_{\bar{i}}\eta).
\end{equation}
 Therefore, ~\eqref{eq: thirdTerm} can be used to cancel the term containing ${\rm Re}(\de_j\psi \de_{\bar{j}}\eta){\rm Re}(\de_i\psi \de_{\bar{i}}\eta)$) appearing on the third line of~\eqref{eq: firstTerm} after applying~\eqref{eq: redundant}.  Putting everything together we get the following expression for $-$\eqref{eq: firstTerm}~$-$\eqref{eq: secondTerm}~$+$~\eqref{eq: thirdTerm}
 \begin{equation}\label{eq: finCurv}
 \begin{aligned}
 &-(1+(\tan(\Theta-\hat{\theta}))^2) \sum_{i,j}\frac{ |\de_i\psi|^2|\de_j \eta|^2 }{(1+\lambda_i^2)(1+\lambda_j^2)}\\
 & +(1+(\tan(\Theta-\hat{\theta}))^2) \sum_{i,j}\frac{ {\rm Re}(\de_j\psi \de_{\bar{j}}\eta \de_i\eta\de_{\bar{i}}\psi )  }{(1+\lambda_i^2)(1+\lambda_j^2)}\\
&-  \sum_{i,j}\frac{(\tan(\Theta-\hat{\theta}) - \lambda_i)(\tan(\Theta-\hat{\theta}) - \lambda_j){\rm Im}(\de_j\psi \de_{\bar{j}}\eta){\rm Im}(\de_i\psi \de_{\bar{i}}\eta) }{(1+\lambda_i^2)(1+\lambda_j^2)}.
\end{aligned}
\end{equation}
The third line can be written as a square, and is therefore clearly negative.  We claim that the first line controls the second.  To do this we symmetrize the first sum to get
\[
(1+(\tan(\Theta-\hat{\theta}))^2) \sum_{ij}\frac{\frac{1}{2}( |\de_i\psi|^2|\de_j \eta|^2 + |\de_j\psi|^2|\de_i \eta|^2)}{(1+\lambda_i^2)(1+\lambda_j^2)}.
 \]
 Then it suffices to show that
 \[
 \frac{1}{2}( |\de_i\psi|^2|\de_j \eta|^2 + |\de_j\psi|^2|\de_i \eta|^2)-{\rm Re}(\de_j\psi \de_{\bar{j}}\eta \de_i\eta\de_{\bar{i}}\psi ) \geq0.
 \]
Write $X_{ji} = \de_j \psi\de_{i}\eta$.  Then we have
\[
\frac{1}{2}(|X_{ij}|^2 + |X_{ji}|^2) \geq |X_{ij}||X_{ji}| \geq {\rm Re}(X_{ji}\overline{X_{ij}})
\]
which is the desired inequality.  Let us now consider the equality case $R(\psi, \eta, \eta, \psi) =0$.  In the above notation we must have $|X_{ij}| = |X_{ji}|$ and $|X_{ij}||X_{ji}| ={\rm Re}(X_{ji}\overline{X_{ij}})$ and hence $X_{ij}=X_{ji}$.  From this it easily follows that $\de \psi \wedge \de \eta =0$, and hence $\de \psi, \de \eta$ are parallel, in the sense that for each $p\in X$ where $\de \psi \ne 0$ there is a number $c(p)\in \mathbb{C}$ such that $\de \psi = c(p) \de \eta$ (and vice versa whenever $\de \eta \ne 0$).  Finally, if $\de \psi, \de \eta$ are parallel, then the third term in~\eqref{eq: finCurv} becomes
\[
\left(\sum_{i}\frac{(\tan(\Theta-\hat{\theta}) - \lambda_i){\rm Im}(c)|\de_j\eta|^2}{(1+\lambda_i^2)}\right)^2
\]
and hence $R(\psi, \eta, \eta, \psi) =0$ if and only if $\de \psi, \de \eta$ are parallel and at each point $p\in X$ either $Q(\nabla \eta, \nabla \eta)=0$, or $c\in \mathbb{R}$.  It is rather easy to generate such ``flat 2-planes" in $T_{\phi}\mathcal{H}$.  For example, by taking $\eta = a\circ \psi +b$ where $a :\mathbb{R}\rightarrow \mathbb{R}$ is smooth.

\begin{remark}
The formula for the sectional curvature~\eqref{eq: finCurv} appears to be somewhat different from the formula obtained by Solomon for the curvature of the space of positive Lagrangians \cite{Solomon14}.  This is somewhat surprising given that the two spaces are related under mirror symmetry.  To understand their relation it is important to recall the real Fourier-Mukai transform of \cite{LYZ}; for this purpose we will restrict to the case of $[\alpha] \in H^{1,1}(X,\mathbb{Z})$, but the reader can check that everything we will say is true for general classes after appropriately including the $B$-field.  We will briefly recall this construction, but refer the reader to \cite{LYZ} and the references therein for a thorough treatment.  Recall that in semi-flat mirror symmetry a pair of mirror Calabi-Yau manifolds $X, \check{X}$ of real dimension $2n$ are given by the following construction.  There is a base $B$ of real dimension $n$, and a lattice $\Lambda$ so that
\[
\check{X} = T^{*}B/ \check{\Lambda}, \qquad X = TB/\Lambda.
\]
Here $\check{X}$ has a natural symplectic structure, and $X$ has a natural complex structure.  The latter is constructed in the usual way: if $(x_1,\ldots, x_n)$ are coordinates on the base $B$, and $(y_1,\ldots, y_n)$ are the natural coordinates on the fibers of $TB$, then the complex structure makes $x_i+\sqrt{-1}y_i$ holomorphic coordinates on $X$.  Under the real Fourier-Mukai transform a Lagrangian section of the SYZ fibration of $\check{X}$ is mapped to a holomorphic line bundle $L \rightarrow X$ with a hermitian metric $h$ such that $h(x,y) = h(x)$.  That is, $h$ does not depend on the fiber coordinates of the SYZ fibration on $X$.  Therefore the {\em true} SYZ transform of the space of positive Lagrangian sections of $\check{X}$ would be the space $\mathcal{H}_{inv}$ consisting of metrics on $L$ constant along the fibers of $X\rightarrow B$.  The tangent space to $\mathcal{H}_{inv}$ consists of real functions constant along the fibers, and hence for any $\psi \in T_{\phi}\mathcal{H}_{inv}$ we have (in the natural coordinates on $X$)
\[
\frac{\de}{\de z_i}\psi = \frac{1}{2}\frac{\de}{\de x_i}\psi = \frac{\de}{\de \bar{z}_i}\psi 
\]
With this restriction, one can then easily check that under the transformation in \cite{LYZ} the first two terms in~\eqref{eq: finCurv} yield exactly the formula in~\cite{Solomon14}, while the third term in~\eqref{eq: finCurv} vanishes identically.
\end{remark}

\section{Completion of $(\mathcal{H},d)$}\label{sec: completion}

In this section, we will show that the completion of $(\mathcal{H},d)$ is a $\mathrm{CAT}(0)$ space. Denote by $C(\mathcal{H})$ the set of Cauchy sequence $(\phi_i)$ in $\mathcal{H}$ and define a equivalence relation on $C(\mathcal{H})$ by
\[
(\phi_i)\sim (\varphi_i)\quad  \Longleftrightarrow \quad \lim_{i\rightarrow +\infty}d(\phi_i,\varphi_i)=0.
\]
Denote the completion of $\mathcal{H}$ by $\tilde{\mathcal{H}}=C(\mathcal{H})/\sim$ and define the metric on $\tilde{\mathcal{H}}$ by 
\[
\tilde d\left( [\phi_i],[\varphi_i]\right)=\lim_{i\rightarrow +\infty} d(\phi_i,\varphi_i).
\]

First let us isolate the main ingredient of the proof of Theorem \ref{CAT(0) space}.
\begin{proposition}\label{nonpositive-dist}
Let $P$, $Q$ and $R$ be three points in $\mathcal{H}$. For each $i\in \mathbb{N}$ let $\varphi_i(s),s\in [0,1]$ be the $i^{-1}$-geodesic from $P$ to $Q$. Then for all  $\lambda\in [0,1]$, we have
\begin{equation}
\limsup_{i\rightarrow +\infty} d\left(R,\varphi_i(\lambda)\right)^2\leq (1-\lambda)d(R,P)^2+\lambda d(R,Q)^2-\lambda(1-\lambda)d(P,Q)^2.
\end{equation}

\end{proposition}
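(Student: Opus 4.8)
The plan is to prove the $\epsilon$-regularized version of the inequality and pass to the limit, in the spirit of the arguments of Calabi--Chen and B\l ocki \cite{Blocki2011} for the space of K\"ahler metrics; equivalently, we establish an (almost) convexity of $s\mapsto d(R,\varphi^\epsilon(s))^2$ with the correct defect. Fix $P,Q,R\in\mathcal H$, write $\epsilon=i^{-1}$ and $\varphi^\epsilon=\varphi_i$ for the $\epsilon$-geodesic from $P$ to $Q$. For $\epsilon$ fixed and $\delta>0$ let $u=u^{\epsilon,\delta}(\cdot,s)\colon[0,1]\to\mathcal H$ be the $\delta$-geodesic from $R$ to $\varphi^\epsilon(s)$; since the $\delta$-geodesic equation is non-degenerate elliptic and $\varphi^\epsilon(s)$ depends smoothly on $s$, $u$ is smooth in $(\tau,x,s)$. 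Set
\[
E^{\epsilon,\delta}(s):=\int_0^1\bigl\|\partial_\tau u(\tau,s)\bigr\|_{u(\tau,s)}^2\,d\tau .
\]
By Proposition~\ref{distance-geodesic} the length of $u(\cdot,s)$ tends to $d(R,\varphi^\epsilon(s))$ as $\delta\to0$, and by Lemma~\ref{Estimates for E}(i) (applied with parameter $\delta$) the speed $\|\partial_\tau u\|^2$ is constant in $\tau$ up to an error $O(\delta^2)$; hence $E^{\epsilon,\delta}(s)\to d(R,\varphi^\epsilon(s))^2$ as $\delta\to0$, uniformly in $s$, and in particular $E^{\epsilon,\delta}(0)\to d(R,P)^2$ and $E^{\epsilon,\delta}(1)\to d(R,Q)^2$. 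By Lemma~\ref{Estimates for E} and Proposition~\ref{distance-geodesic} one also has $\inf_s\|\dot\varphi^\epsilon(s)\|^2\to d(P,Q)^2$ as $\epsilon\to0$. Thus the proposition follows once we show
\[
(E^{\epsilon,\delta})''(s)\;\ge\;2\|\dot\varphi^\epsilon(s)\|^2-C\epsilon^2-o_\delta(1),\qquad s\in[0,1],
\]
with $C$ independent of $\epsilon,\delta$ and $o_\delta(1)\to0$ as $\delta\to0$ for $\epsilon$ fixed: setting $c_{\epsilon,\delta}:=\inf_s\|\dot\varphi^\epsilon(s)\|^2-\tfrac12 C\epsilon^2-\tfrac12 o_\delta(1)$, this forces $s\mapsto E^{\epsilon,\delta}(s)-c_{\epsilon,\delta}s^2$ to be convex, whence $E^{\epsilon,\delta}(\lambda)\le(1-\lambda)E^{\epsilon,\delta}(0)+\lambda E^{\epsilon,\delta}(1)-\lambda(1-\lambda)c_{\epsilon,\delta}$; letting $\delta\to0$ and then $\epsilon\to0$ (so $c_{\epsilon,\delta}\to d(P,Q)^2$) gives exactly the claimed inequality. (The cases $\lambda\in\{0,1\}$ are immediate.)

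To establish the differential inequality we use the second variation of energy. Abbreviate $T=\partial_\tau u$, $J=\partial_s u$, and let $a:=\nabla_\tau\partial_\tau u$ be the ``acceleration'' of the $\delta$-geodesic; by Lemma~\ref{Eqn for phi epsilon} (with parameter $\delta$), Definition~\ref{Levi-Civita connection}, and the uniform estimates of Theorem~\ref{thm: CY}, $\|a(\tau,s)\|_{C^0(X)}\le C\delta^2$. Using that $\nabla$ (Definition~\ref{Levi-Civita connection}) is metric compatible and torsion free, $\nabla_\tau J=\nabla_s T$, together with the curvature identity $\nabla_\tau\nabla_s T-\nabla_s\nabla_\tau T=R(T,J)T$, a direct computation gives
\[
(E^{\epsilon,\delta})''(s)=2\Bigl[\langle\nabla_s J,T\rangle\Bigr]_{\tau=0}^{\tau=1}+2\int_0^1\|\nabla_\tau J\|^2\,d\tau+2\int_0^1\langle J,R(T,J)T\rangle\,d\tau-2\int_0^1\langle\nabla_s J,a\rangle\,d\tau .
\]
We examine the four terms. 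First, $u(0,s)=R$ is independent of $s$, so $J(0,s)\equiv0$ and the boundary contribution at $\tau=0$ vanishes; at $\tau=1$, $J(1,s)=\dot\varphi^\epsilon(s)$, and since $\varphi^\epsilon$ is itself an $\epsilon$-geodesic, Lemma~\ref{Eqn for phi epsilon} and Definition~\ref{Levi-Civita connection} give $\nabla_s J(1,s)=\nabla_{\dot\varphi^\epsilon}\dot\varphi^\epsilon=-4e^{-2s}\epsilon^2\,\dfrac{{\rm Im}(e^{-\sqrt{-1}\hat{\theta}}\Omega_{\varphi^\epsilon}^{\,n})}{{\rm Re}(e^{-\sqrt{-1}\hat{\theta}}\Omega_{\varphi^\epsilon}^{\,n})}$, so $\|\nabla_sJ(1,s)\|_{C^0(X)}\le C\epsilon^2$; as $\|T(1,s)\|_{C^0(X)}\le C$ (uniformly, by Theorem~\ref{thm: CY}), the boundary term is $O(\epsilon^2)$ with $C$ uniform. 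Second, by the symmetries of the Riemann tensor of the Levi-Civita connection, $\langle J,R(T,J)T\rangle=-\langle R(T,J)J,T\rangle\ge0$, the inequality being the non-positivity of sectional curvature, Theorem~\ref{Connection and curvature}; this term may be discarded. Third, since $J(0,s)=0$ and $\bigl|\tfrac{d}{d\tau}\|J\|\bigr|\le\|\nabla_\tau J\|$ a.e., Cauchy--Schwarz yields $\|\dot\varphi^\epsilon(s)\|^2=\|J(1,s)\|^2\le\int_0^1\|\nabla_\tau J\|^2\,d\tau$, so $2\int_0^1\|\nabla_\tau J\|^2\,d\tau\ge2\|\dot\varphi^\epsilon(s)\|^2$. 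Combining these three observations already produces the stated differential inequality, up to the last term.

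It remains to show $\bigl|\int_0^1\langle\nabla_s J,a\rangle\,d\tau\bigr|=o_\delta(1)$ for $\epsilon$ fixed, and this is the main obstacle. Since $\|a\|_{C^0(X)}\le C\delta^2$ and $\nabla_s J=\partial_s^2u+Q(\nabla\partial_s u,\nabla\partial_s u)$ (Definition~\ref{Levi-Civita connection}), it suffices to control, up to a factor $o(\delta^{-2})$ uniformly in $\delta,\tau,s$, the quantities $\|\partial_s^2u\|_{L^1(X)}$ and $\|\nabla\partial_s u\|_{L^2(X)}^2$. The field $\partial_s u$ solves the linearization of the $\delta$-geodesic equation with boundary data $0$ and $\dot\varphi^\epsilon(s)$; the maximum principle gives $\|\partial_s u\|_{C^0}\le C$ uniformly (exactly as for $\partial_t$ in the proof of Lemma~\ref{triangle}), and an $L^2$ energy estimate for this equation --- whose ellipticity is \emph{uniform in the $X$-directions}, because the eigenvalues of $\omega^{-1}\alpha_{u}$ are uniformly bounded by Theorem~\ref{thm: CY} --- controls $\|\nabla_x\partial_s u\|_{L^2}$; the second $s$-derivative $\partial_s^2u$ is then handled by differentiating once more and integrating by parts in $x$ (using that $\Omega_u^{n-1}$ is $d$-closed), which trades a second derivative of $\partial_s u$ for quadratic first-derivative terms already controlled. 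Obtaining these integrated estimates on the second variation of the approximating $\delta$-geodesics, uniformly in the regularization parameter, is the technical heart of the argument; with them in hand, the four-term decomposition above yields the differential inequality, and the limiting procedure of the first paragraph completes the proof. (The potential coincidence $\varphi^\epsilon(s)=R$ is harmless here, since $E^{\epsilon,\delta}$ is defined and smooth in $s$ regardless; in particular, unlike in later sections, we do not need the $C^1$-differentiability of $d$ from Theorem~\ref{Metric structure}.)
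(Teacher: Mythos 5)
Your overall strategy is the same as the paper's: study the two-parameter family $u^{\epsilon,\delta}(\tau,s)$ of $\delta$-geodesics from $R$ to $\varphi^{\epsilon}(s)$, show that the total energy $E^{\epsilon,\delta}(s)$ is ``almost convex'' in $s$ with the correct quadratic defect, and pass $\delta\to0$ then $\epsilon\to0$ using Proposition~\ref{distance-geodesic} and Corollary~\ref{cor: constSpeed}. Several of your steps are correct and even cleaner than the paper's: the Cauchy--Schwarz bound $\|\dot\varphi^{\epsilon}(s)\|^{2}=\|J(1,s)\|^{2}\le\int_{0}^{1}\|\nabla_{\tau}J\|^{2}d\tau$ replaces Lemma~\ref{2nd-der} at this step, and your $C^{0}$ bound $\|\nabla_{s}J(1,s)\|_{C^{0}}\le C\epsilon^{2}$ for the boundary term is more direct than the paper's detour through convexity of the functional $\mathcal{J}$.

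The genuine gap is the fourth term, $-2\int_{0}^{1}\langle\nabla_{s}J,a\rangle\,d\tau$. You need this to be $o_{\delta}(1)$ \emph{pointwise in $s$}, and you explicitly acknowledge that the required $\delta$-uniform integrated estimates on $\partial_{s}^{2}u$ and $\nabla^{X}\partial_{s}u$ are ``the technical heart of the argument'' without supplying them; Theorem~\ref{thm: CY} and the maximum principle give $\delta$-uniform $C^{0}$ control of $\partial_{s}u$ but not of its spatial gradient or second $s$-derivative, and the sketched $L^2$ energy estimate produces lower-order terms involving unbounded third derivatives of $u$ in the commutator. The paper avoids this issue entirely by a different bookkeeping: writing $\tfrac{1}{2}\partial_s E^{tot}=I(s)+j^{-2}II(s)$ with $j^{-2}II(s)=-\int_{0}^{1}\langle J,a\rangle\,d\tau$, differentiating only $I$, and folding $-2j^{-2}\int_{0}^{s}II$ into the convexity quantity $F$; then the $j^{-2}II'$ terms cancel algebraically in $F''$, and only the $C^{0}$ bound $|II|\le C_{i}$ (which does follow from the maximum principle) is needed before letting $j\to\infty$. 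Your argument can be repaired by subtracting this same $-2j^{-2}\int_{0}^{s}II$: the uncontrolled $-2\int\langle\nabla_{s}J,a\rangle$ then combines with $-2j^{-2}II'=2\int\langle\nabla_{s}J,a\rangle+2\int\langle J,\nabla_{s}a\rangle$ to leave $+2\int\langle J,\nabla_{s}a\rangle$, and showing this is $\ge0$ is exactly the coordinate computation of $\langle\nabla_{Y}\nabla_{X}X,Y\rangle\ge0$ in the paper's Lemma~\ref{2nd-der} — so that lemma is not in fact avoidable, only postponed.
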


We will follow closely the argument in \cite{CalabiChen2002}. To begin with, we first show that the convergence in classical sense will coincide with the convergence in metric $\tilde d$.{
\begin{lemma}\label{conv-coin}
Suppose $(\varphi_i)$ is a sequence in $\mathcal{H}$ such that $\varphi_i\rightarrow \varphi_\infty$ uniformly, then $(\varphi_i)$ is a Cauchy sequence with respect to metric $d$.
\end{lemma}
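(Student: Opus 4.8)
The plan is to bound $d(\varphi_i, \varphi_j)$ directly in terms of $\|\varphi_i - \varphi_j\|_\infty$, which tends to $0$ since $(\varphi_i)$ converges uniformly and is therefore uniformly Cauchy in $C^0$. The natural tool is Lemma \ref{classical-metric-relation}, which already gives exactly such a bound, but with the $\limsup$ over $\varepsilon \to 0$ of the length of the $\varepsilon$-geodesic rather than $d$ itself. To convert this into a statement about $d$, I would invoke Proposition \ref{distance-geodesic}, which identifies $d(\phi_0,\phi_1) = \lim_{\varepsilon \to 0} {\rm length}(\phi^\varepsilon)$ for the $\varepsilon$-geodesic $\phi^\varepsilon$ joining $\phi_0$ to $\phi_1$. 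Combining these two facts immediately yields a uniform constant $C$, depending only on $[\alpha]$ and $[\omega]$, such that
\[
d(\varphi_i, \varphi_j) \leq C \|\varphi_i - \varphi_j\|_\infty
\]
for all $i, j$.

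The remaining step is routine: given $\delta > 0$, uniform convergence $\varphi_i \to \varphi_\infty$ provides $N$ such that $\|\varphi_i - \varphi_j\|_\infty < \delta/C$ for all $i, j \geq N$, whence $d(\varphi_i, \varphi_j) < \delta$. Thus $(\varphi_i)$ is Cauchy with respect to $d$, as claimed.

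One small point to be careful about: Lemma \ref{classical-metric-relation} and Proposition \ref{distance-geodesic} are both stated for a fixed pair of endpoints in $\mathcal{H}$, so I should note that the constant $C$ in Lemma \ref{classical-metric-relation} depends only on $[\alpha]$ and $[\omega]$ (as stated there), and in particular not on the particular pair $\varphi_i, \varphi_j$ — this uniformity is what makes the argument go through for the whole sequence at once. There is no serious obstacle here; the lemma is essentially an immediate consequence of the two earlier results, and its role is simply to record that uniform (i.e. $C^0$) convergence is strong enough to guarantee $d$-Cauchyness, which is needed so that classically convergent sequences descend to points of the completion $\widetilde{\mathcal{H}}$.
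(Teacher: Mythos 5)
Your proof is correct and takes essentially the same route as the paper, which likewise cites Lemma \ref{classical-metric-relation} together with Theorem \ref{Metric structure} (which contains Proposition \ref{distance-geodesic}) to deduce $d(\varphi_i,\varphi_j)\leq C\|\varphi_i-\varphi_j\|_\infty$ and then concludes from uniform $C^0$-Cauchyness. The remark about the constant $C$ depending only on $[\alpha]$ and $[\omega]$ is exactly the right point to flag for uniformity along the sequence.
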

\begin{proof}
This follows from Lemma \ref{classical-metric-relation}, Theorem~\ref{Metric structure} and the definition directly.
\end{proof}
}

\begin{lemma}\label{2nd-der}
Let $P,Q,R$ be three points in $\mathcal{H}$. Suppose $\varphi(s,t),s,t\in [0,1]$ is a two parameter family of curves in $\mathcal{H}$ such that $\varphi(s,1)$ is a smooth curve from $P$ to $Q$ and for each $s_0\in [0,1]$, $\varphi(s_0,t)$ is a $\delta$-geodesic from $R=\varphi(s_0,0)$ to $\varphi(s_0,1)$ for some $\delta>0$. Then
$$\frac{\partial}{\partial t} \|\varphi_s\| \Big|_{t=1}\geq \|\varphi_s\|\Big|_{t=1}$$
\end{lemma}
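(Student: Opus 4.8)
The plan is to express the quantity $\partial_t\|\varphi_s\|$ using the formula for $\ell_\epsilon'(t)$ derived in the proof of Lemma~\ref{triangle}, adapted to the present setup where the $\delta$-geodesic is parametrized from $R$ at $t=0$ to $\varphi(s,1)$ at $t=1$, rather than the other way around. Writing $E(s,t) = \int_X \varphi_t^2\,\Ren$, we have $\|\varphi_s\|\big|_{t=1}^2 = \int_X \varphi_s(s,1)^2\,\Ren$ evaluated along the curve $\varphi(\cdot,1)$ from $P$ to $Q$, and we want a lower bound on $\partial_t\|\varphi_t\|$... wait, more carefully: here the roles of $s$ and $t$ from Lemma~\ref{triangle} are essentially swapped, so I would first recompute. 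Set $\ell(s) := \|\varphi_s(s,t)\|$ — no: the statement is about $\partial_t\|\varphi_s\|$, the $t$-derivative of the length-density of the $s$-curves. Following the integration-by-parts computation in Lemma~\ref{triangle} verbatim (with $\phi$ there being our $\varphi$, the variable $s$ there being our $t$, and the variable $t$ there being our $s$), one obtains
\[
\partial_t \|\varphi_s\|^2 = 2\,\partial_s\!\int_X \varphi_s\varphi_t\,\Ren - 2e^{-2t}\delta^2\int_X \varphi_s\,\Imn + (\text{terms from }\partial_t E)
\]
and dividing by $2\|\varphi_s\|$ gives the analogue of~\eqref{eq: diffDist}.

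First I would carefully set up the sign conventions and carry out this integration by parts, using the $\delta$-geodesic equation (Lemma~\ref{Eqn for phi epsilon}) to eliminate the second-order-in-$t$ term; this produces an identity of the schematic form
\[
\partial_t\|\varphi_s\|\Big|_{t=1} = \frac{\int_X \varphi_{st}\varphi_s\,\Ren + (\text{lower order})}{\|\varphi_s\|}\Big|_{t=1}
\]
plus error terms proportional to $\delta^2$. Next, the key point is that at $t=1$ the boundary data is $\varphi(s,1)$, a fixed smooth curve, so $\varphi_{st}(s,1)$ is \emph{not} zero in general — but we can still extract the needed inequality. The main structural input should be a convexity/monotonicity property: because $\varphi(s,\cdot)$ is a $\delta$-geodesic, the energy $E(s,t) = \|\varphi_t(s,t)\|^2$ is almost constant in $t$ (Lemma~\ref{Estimates for E}(i)), and more importantly the function $t\mapsto \|\varphi_s(s,t)\|$ should be almost-convex in $t$ with the right sign at the endpoints. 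I would try to show $\partial_t^2\|\varphi_s\| \geq -C\delta^2$ type bounds, or directly that $\partial_t\log\|\varphi_s\| \geq 1 - C\delta^2/\|\varphi_s\|^2$ at $t=1$, which upon rearrangement is exactly the claimed $\partial_t\|\varphi_s\|\big|_{t=1} \geq \|\varphi_s\|\big|_{t=1}$ in the limit (though note the statement as written has no error term — so presumably this lemma is applied after taking $\delta\to 0$, or the hypothesis implicitly allows absorbing it; I would state it with a $-C\delta$ correction and note it vanishes in the relevant limit, matching how Lemma~\ref{triangle} and Proposition~\ref{distance-geodesic} are used).

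The cleanest route is probably the following. Consider $f(t) := \|\varphi_s(s_0,t)\|$ for fixed $s_0$. Using the Jacobi-field interpretation sketched at the end of Lemma~\ref{triangle} — $\varphi_s$ solves the linearized $\delta$-geodesic equation along the $\delta$-geodesic $t\mapsto\varphi(s_0,t)$ — together with the non-positivity of the curvature (Theorem~\ref{Connection and curvature}), the norm of a Jacobi field along a geodesic in a non-positively curved space is a convex function, and in fact satisfies $f'' \geq f \cdot(\text{something})$ only in the model hyperbolic case; here the relevant comparison is that along a \emph{unit-speed} geodesic the log of the Jacobi field norm grows at rate governed by curvature. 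However, our geodesic $t\mapsto\varphi(s_0,t)$ is parametrized on $[0,1]$ with speed $\|\varphi_t\| = d(R,\varphi(s_0,1)) + o(1)$, not unit speed, so I do not expect a clean $f'\geq f$ purely from abstract Jacobi field comparison — the exponent $e^{-2t}$ and the factor $1$ on the right-hand side must come from the specific structure. Therefore I would do the computation by hand: differentiate $\ell_2'$ once more (or equivalently compute $\partial_t^2\|\varphi_s\|^2$ directly from the energy identity), use Lemma~\ref{Eqn for phi epsilon} to replace $\varphi_{tt}$, use Lemma~\ref{Estimates for E}(i) and the uniform estimates of Theorem~\ref{thm: CY} to bound all $\delta$-dependent terms by $C\delta^2$, and track the leading term, which should reduce to a Cauchy–Schwarz-type inequality giving the factor $1$.

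\textbf{Main obstacle.} The hard part will be identifying and controlling the leading-order term after the second integration by parts: naively differentiating $\ell_2'$ in $t$ produces terms involving $\varphi_{stt}$ and $\varphi_{st}$ paired against $\Ren$ and $\Imn$, and it is not \emph{a priori} obvious these reorganize into a manifestly nonnegative expression plus $O(\delta^2)$. I expect the resolution to mirror the curvature computation in Section~\ref{Connection and curvature section}: the same cancellations that produce the non-positive sectional curvature formula~\eqref{eq: finCurv} should, when specialized to a Jacobi field (i.e. $\varphi_{st}$ satisfying the linearized equation), force $\partial_t^2\|\varphi_s\|^2 \geq 2(\partial_t\|\varphi_s\|)^2 + 2\|\varphi_s\|^2\cdot(\text{nonneg}) - C\delta^2$, which is precisely the differential inequality whose endpoint consequence is $f'(1) \geq f(1)$. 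So the real work is a careful bookkeeping of these terms, leaning on the algebraic identities already established for $Q(\nabla\cdot,\nabla\cdot)$ and the curvature tensor.
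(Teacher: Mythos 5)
Your proposal has the right structural instincts (second variation, Jacobi fields, non-positive curvature), but it stops short of the actual proof and contains two genuine misconceptions that would derail it.

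First, you repeatedly suggest the lemma can only hold up to a $-C\delta^2$ error term and would need to be restated accordingly. That is not what happens, and realizing why is the whole point. The paper computes
\[
\tfrac{1}{2}\,\partial_t^2 \|Y\|^2 = |\nabla_X Y|^2 + \langle \nabla_Y \nabla_X X, Y\rangle - K(X,Y)\langle \cdot\rangle \;\geq\; |\nabla_X Y|^2 + \langle \nabla_Y \nabla_X X, Y\rangle,
\]
with $X=\varphi_t$, $Y=\varphi_s$, and then substitutes $\nabla_X X = -4\delta^2 e^{-2t}\,\Imn/\Ren$ from the $\delta$-geodesic equation. The crucial observation is that the resulting term $\langle \nabla_Y \nabla_X X, Y\rangle$ is \emph{nonnegative}, not merely $O(\delta^2)$: after an explicit pointwise computation in eigen-coordinates the integrand reduces to $\cos(\Theta-\hat\theta)\,\sec(\Theta-\hat\theta)\sum_i |\varphi_{si}|^2/(1+\lambda_i^2) \geq 0$, using that $\Theta-\hat\theta \in (-\pi/2,\pi/2)$ on $\mathcal{H}$. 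So the $\delta$-correction helps rather than hurts, and the inequality is exact with no error. You never identify this sign-definiteness, and your fallback of "absorb it into an $O(\delta)$ error and pass to the limit later" would require rearranging the downstream argument in Proposition~\ref{nonpositive-dist}, which is not how the paper proceeds.

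Second, your worry about parametrization is misplaced. You write that because the $t$-geodesic is not unit speed you "do not expect a clean $f'\geq f$ purely from abstract Jacobi field comparison." In fact the relevant comparison is simply: in nonpositive curvature, $t\mapsto\|J(t)\|$ is convex for a Jacobi field $J$ (once one knows $\partial_t^2\|Y\|^2 \geq 2|\nabla_X Y|^2$, Cauchy--Schwarz yields $\partial_t^2\|Y\| \geq 0$), and since $\varphi(s,0)\equiv R$ forces $Y(0)=0$, convexity on $[0,1]$ gives $\|Y\|'(1) \geq \|Y(1)\| - \|Y(0)\| = \|Y(1)\|$ with no reference to speed. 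There is no hyperbolic comparison and no factor that depends on parametrization; you overcomplicated this step. Your first sketched route via differentiating $\ell_2'$ from Lemma~\ref{triangle} is also not what the paper does — the proof is a direct Riemannian second-variation computation using the connection of Section~\ref{Connection and curvature section}, not an integration-by-parts bookkeeping of the length functional.
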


\begin{proof}
Let $Y=\varphi_s$ and $X= \varphi_t$. Then
\begin{equation*}
\begin{split}
\frac{1}{2}\frac{\partial}{\partial t} \|Y\|^2&= \langle \nabla_X Y,Y\rangle= \langle \nabla_Y X,Y\rangle.
\end{split}
\end{equation*}

Then
\begin{equation*}
\begin{split}
\frac{1}{2} \frac{\partial}{\partial t} \|Y\|^2&=\langle \nabla_X \nabla_Y X,Y\rangle+\langle \nabla_Y X , \nabla_X Y\rangle\\
&=|\nabla_X Y|^2+\langle \nabla_Y \nabla_X X ,Y\rangle -K(X,Y)\\
&\geq |\nabla_X Y|^2+\langle \nabla_Y \nabla_X X ,Y\rangle.
\end{split}
\end{equation*}

Noted that from $\delta$-geodesic equation, we have
\begin{equation*}
\begin{split}
\nabla_X X&=\varphi_{tt}+Q(\nabla \varphi_t,\nabla\varphi_t)\\
&=\varphi_{tt}+n \frac{\sqrt{-1}\partial \varphi_t \wedge \bar\partial \varphi_t \wedge \Imnn}{\Ren}\\
&= -4\delta^2 e^{-2t} \frac{\Imn}{\Ren}.
\end{split}
\end{equation*}

Hence, (using the notation introduced in Definition~\ref{defn: notation})
\begin{equation*}
\begin{split}
&\quad \nabla_Y (\nabla_X X)\\
&=\partial_s \nabla_XX +Q(\nabla \varphi_s,\nabla \nabla_XX)\\
&=-4\delta^2 e^{-2t}\left[\frac{\partial}{\partial s}\frac{\Imn}{\Ren} \quad +Q\left(\nabla \frac{\Imn}{\Ren}, \nabla \varphi_s\right)\right]\\
&=-4\delta^2e^{-2t}\left(I+II\right).
\end{split}
\end{equation*}
where
\begin{equation*}
\begin{split}
I&=\frac{n\ddbar \varphi_s\wedge \mathrm{Re}\left(e^{-\sqrt{-1}\hat\theta} \Omega_\varphi^{n-1} \right)}{\Ren}\\
&\quad +\frac{\Imn}{\Ren} \frac{n\ddbar \varphi_s\wedge\mathrm{Im}\left(  e^{-\sqrt{-1}\hat\theta}\Omega_{\varphi}^{n-1}\right)}{\Ren}\\
&=III+IV.
\end{split}
\end{equation*}

In particular,
\begin{equation*}
\begin{split}
&\quad -4\delta^2e^{-2t}\langle III,Y\rangle\\
&=-4n\delta^2e^{-2t}\int_X \varphi_s{\ddbar \varphi_s\wedge \mathrm{Re}\left(e^{-\sqrt{-1}\hat\theta} \Omega_\varphi^{n-1} \right)}\\
&=4n\delta^2e^{-2t} \int_X \sqrt{-1} \partial \varphi_s\wedge \bar\partial \varphi_s \wedge\Renn
\end{split}
\end{equation*}

Write $F=\frac{\Imn}{\Ren} = \tan(\Theta_{\omega}(\alpha_{\phi}) - \hat{\theta})$ for convenience, then
\begin{equation*}
\begin{split}
&\quad -4\delta^2e^{-2t} \langle IV,Y\rangle\\
&= -4n\delta^2e^{-2t}  \int_X \varphi_sF {\ddbar \varphi_s\wedge\mathrm{Im}\left(  e^{-\sqrt{-1}\hat\theta}\Omega_{\phi}^{n-1}\right)}\\
&= 4n\delta^2 e^{-2t} \int_X \sqrt{-1} \partial\left( \varphi_s F \right)\wedge \bar\partial\varphi_s\wedge\mathrm{Im}\left(  e^{-\sqrt{-1}\hat\theta}\Omega_{\phi}^{n-1}\right)\\
&=4n\delta^2 e^{-2t} \int_X \sqrt{-1} \left( \varphi_s\partial F+ F \partial \varphi_s \right)\wedge \bar\partial\varphi_s\wedge\mathrm{Im}\left(  e^{-\sqrt{-1}\hat\theta}\Omega_{\phi}^{n-1}\right)\\
&= 4n\delta^2 e^{-2t} \int_XF \sqrt{-1} \partial\varphi_s \wedge \bar\partial \varphi_s \wedge \Imnn\\
&\quad +4n\delta^2e^{-2t} \int_X \varphi_s\sqrt{-1} \Re\left(\partial F\wedge \bar\partial\varphi_s \right) \wedge \Imnn
\end{split}
\end{equation*}

To summarize, we get
\begin{equation*}
\begin{split}
&\quad -4\delta^2e^{-2t}\langle I,Y\rangle\\
&=4n\delta^2e^{-2t} \int_X \sqrt{-1} \partial \varphi_s\wedge \bar\partial \varphi_s \wedge\Renn\\
&\quad +4n\delta^2 e^{-2t} \int_XF \sqrt{-1} \partial\varphi_s \wedge \bar\partial \varphi_s \wedge \Imnn\\
&\quad +4n\delta^2e^{-2t} \int_X \varphi_s\Re \left(\sqrt{-1}\partial F\wedge \bar\partial\varphi_s \right) \wedge \Imnn
\end{split}
\end{equation*}

For $II$:
\begin{equation*}
\begin{split}
&\quad -4\delta^2 e^{-2t}\langle II,Y\rangle\\
& =-4\delta^2 e^{-2t}\int_X\varphi_s L(\nabla F,\nabla \varphi_s)  \Ren \\
&=-4\delta^2 e^{-2t}\frac{n}{2}\int_X \varphi_s {\sqrt{-1}(\partial  F\wedge \bar\partial \varphi_s+\partial\varphi_s\wedge \bar\partial F)\wedge \Imnn}\\
&=-4n\delta^2 e^{-2t} \int_X \varphi_s  \Re\left(\sqrt{-1} \partial F\wedge \bar\partial \varphi_s \right)\wedge \Imnn
\end{split}
\end{equation*}

Therefore,
\begin{equation*}
\begin{split}
&\quad \langle \nabla_Y\nabla_XX,Y\rangle\\
&=4n\delta^2e^{-2t} \int_X \sqrt{-1} \partial \varphi_s\wedge \bar\partial \varphi_s \wedge\Renn\\
&\quad +4n\delta^2 e^{-2t} \int_XF \sqrt{-1} \partial\varphi_s \wedge \bar\partial \varphi_s \wedge \Imnn
\end{split}
\end{equation*}

To see the inequality, take a coordinate  at $p\in X$ such that $g_{i\bar j}=\delta_{ij}$ and $\a_\varphi=\lambda_i \delta_{ij}$. Then
\begin{equation*}
\begin{split}
&\quad \sqrt{-1}\partial \varphi_s \wedge \bar\partial\varphi_s \wedge \Renn\\
&= r(n-1)!\Re\left(\sum_{i=1}^n|\varphi_{si}|^2 \frac{e^{\sqrt{-1}(\Theta_{\omega}(\alpha_{\phi})-\hat\theta)}(1-\sqrt{-1}\lambda_i)}{1+\lambda_i^2}\right)\\
&=r(n-1)!\sum_{i=1}^n|\varphi_{si}|^2 \frac{ \cos(\Theta_{\omega}(\alpha_{\phi})-\hat\theta)+\lambda_i \sin(\Theta_{\omega}(\alpha_{\phi})-\hat\theta)}{1+\lambda_i^2}
\end{split}
\end{equation*}

On the other hand,
\begin{equation*}
\begin{split}
&\quad \frac{\Imn}{\Ren} \sqrt{-1} \partial\varphi_s \wedge \bar\partial \varphi_s \wedge \Imnn\\
&=r(n-1)!\tan (\Theta-\hat\theta) \Im\left( \sum_{i=1}^n|\varphi_{si}|^2\frac{e^{\sqrt{-1}(\Theta_{\omega}(\alpha_{\phi})-\hat\theta)}(1-\sqrt{-1}\lambda_i)}{1+\lambda_i^2}\right)\\
&=r(n-1)!\tan (\Theta-\hat\theta)\sum_{i=1}^n|\varphi_{si}|^2\frac{\sin(\Theta_{\omega}(\alpha_{\phi})-\hat\theta)-\lambda_i \cos(\Theta_{\omega}(\alpha_{\phi})-\hat\theta)}{1+\lambda_i^2}\\
&=r(n-1)!\sum_{i=1}^n|\varphi_{si}|^2\frac{\sin^2(\Theta_{\omega}(\alpha_{\phi})-\hat\theta)\sec(\Theta_{\omega}(\alpha_{\phi})-\hat\theta)-\lambda_i \sin(\Theta_{\omega}(\alpha_{\phi})-\hat\theta)}{1+\lambda_i^2}\\
\end{split}
\end{equation*}

Summing up, we have
\begin{equation*}
\begin{split}
 \langle \nabla_Y\nabla_XX,Y\rangle&\geq 0
\end{split}
\end{equation*}
since $\Theta-\hat\theta \in (-\frac{\pi}{2},\frac{\pi}{2})$ from the definition of $\mathcal{H}$. Therefore,
$\frac{1}{2}\frac{\partial^2}{\partial t^2}\|Y\|^2 \geq \|\nabla_{X}Y\|^2$ and hence from Cauchy inequality that
$$\frac{\partial^2}{\partial t^2}\|Y\|\geq 0.$$
Since $\varphi(s,0)\equiv R$, $Y(0)=0$, we have
$$\frac{\partial }{\partial t}\Big|_{t=1}\|Y\|\geq \|Y(s,1)\|.$$
This completes the proof.
\end{proof}

Now we are ready to prove Proposition \ref{nonpositive-dist}.
\begin{proof}
[Proof of Proposition \ref{nonpositive-dist}]
Let $\varphi^i(s),s\in [0,1]$ be a $i^{-1}$-geodesic from $P$ to $Q$. By estimates in \cite{CY18}, for all $j>i>>1$ there exist a two parameter family $\varphi_{i,j}(s,t)$, $s,t\in [0,1]$ such that $\varphi_{i,j}(s,1)=\varphi^i(s)$ and $\varphi_{i,j}(s_0,t),t\in [0,1]$ is a $j^{-1}$-geodesic from $R$ to $\varphi^i(s_0)$ for each $s_0\in [0,1]$. Let $ E^{tot}_{i,j}(s)$ be the total energy of the $j^{-1}$-geodesic $\varphi_{i,j}(s,t),t\in [0,1]$ connecting $R$ to $\varphi^{i}(s)$;  ie.
\[
\begin{aligned}
E^{tot}_{i,j}(s) := \int_{0}^{1} E(\varphi_{i,j})(s,t) dt &=  \int_{0}^{1} \| \de_t \varphi_{i,j}(s,t)\|^2 dt\\
&=\int^1_0 \int_X|\varphi_t|^2 \Ren dt.
\end{aligned}
\]
where $\| \cdot \|$ denotes the length with respect to the metric on $\mathcal{H}$.  To ease notation we will drop the index $i,j$ when the meaning is clear.  Furthermore, we denote $X(s,t)=\varphi_t(s,t)$ and $Y(s,t)=\varphi_s(s,t)$.

Our goal is to estimate from below $\frac{d^2}{ds^2}E^{tot}(s)$.  Compute
\begin{equation}
\begin{split}
\frac{1}{2} \frac{d  E^{tot}(s)}{ds}
&=\int^1_0 \langle \nabla_YX,X\rangle dt\\
&=\int^1_0 \langle \nabla_XY,X\rangle dt\\
&=\int^1_0 X\langle Y,X\rangle -\langle Y,\nabla_XX\rangle dt\\
&=\langle X,Y\rangle(s,t) |_{t=1}+4j^{-2} \int^1_0 \int_X e^{-2t}\varphi_s(s,t) \Imn  dt\\
&=: I(s)+j^{-2}II(s)
\end{split}
\end{equation}
where we have used  $\epsilon$-geodesic equation and that $\varphi(s,0)\equiv R$.

Next we compute,
\begin{equation}
\begin{split}
\frac{d}{ds} I=\frac{d}{d s} \langle X,Y\rangle\Big|_{t=1}
&=\langle \nabla_YX,Y\rangle\Big|_{t=1} + \langle X,\nabla_YY\rangle\Big|_{t=1} \\
&=\frac{1}{2} \frac{d}{dt}\Big|_{t=1} \|Y\|^2   + \langle X,\nabla_YY\rangle\Big|_{t=1} \\
&\geq  \|Y(s,1)\|^2+ \langle X,\nabla_YY\rangle\Big|_{t=1} \\
&= \|\phi^{i}_{s}(s)\|^2 + \langle X,\nabla_YY\rangle\Big|_{t=1},
\end{split}
\end{equation}
here  we have used Lemma \ref{2nd-der}. Since $\varphi(s,1), s\in [0,1]$ is a $i^{-1}$-geodesic we have
\begin{equation}
\begin{split}
 \langle X,\nabla_YY\rangle\Big|_{t=1} &=-4i^{-2} e^{-2}\int_X \varphi_t(s,1) \Imn.
\end{split}
\end{equation}
Consider the $1$-form on $\mathcal{H}$ given by
\[
T_{\phi} \mathcal{H} \ni \psi \longrightarrow \delta \mathcal{J}(\psi) :=  -\int_{X}\psi \Imn.
\]
It was shown in \cite{CY18} that, after fixing a base point in $\mathcal{H}$, $\delta \mathcal{J}$ integrates to a well-defined function $\mathcal{J}: \mathcal{H} \rightarrow \mathbb{R}$.  Furthermore, in \cite{CY18} it was shown that $\mathcal{J}$ is convex along $\epsilon$-geodesics.  Fix $P \in \mathcal{H}$ as the basepoint, for convenience.  Since $\phi(s,t)$ is a $j^{-1}$-geodesic in $t$, the convexity of $\mathcal{J}$ implies
\[
\begin{aligned}
-\int_X \varphi_t(s,1) \Imn &= \frac{d}{dt}\bigg|_{t=1}\mathcal{J}(\phi(s,t)) \\
&\geq \mathcal{J}(\phi(s,1)) - \mathcal{J}(\phi(s,0))\\
& = \mathcal{J}(\phi^{i}(s)) - \mathcal{J}(R).
\end{aligned}
\]
Now, since $\phi^{i}(s)$ is a $i^{-1}$-geodesic between $P,Q \in \mathcal{H}$, the uniform estimates in Theorem~\ref{thm: CY} imply
\[
|\mathcal{J}(\phi^{i}(s))| \leq C
\]
for a constant $C$ independent of $i$.  Thus we obtain
\[
 \langle X,\nabla_YY\rangle\Big|_{t=1} \geq -Ci^{-2}
 \]
 for a uniform constant $C$.

It remains to consider the term $II(s)$ above.  Since $\phi(s,t)$ is a $j^{-1}$ geodesic in $t$ for each fixed $s$, the function $\phi_s(s,t)$ solves the linearized $j^{-1}$-geodesic equation.  Thus, arguing in the same way as the proof of Lemma~\ref{triangle} we have
\[
\sup_{(x,t) \in X\times [0,1]} |\phi_{s}(s,t)| \leq \sup_{x\in X} |\phi^{i}_{s}(s)|.
\]
Thus, applying the $C^2$ estimates in Theorem~\ref{thm: CY},  we obtain that there is a constant $C_i$ depending on $i$ so that $| II(s)|\leq C_i$.  Consider the quantity
\[
F_{i,j}(s) = E^{tot}_{i,j}(s)-2j^{-2}\int_{0}^{s} II_{i,j}(\xi)d\xi-2\int_{0}^s\int_0^\xi \|\de_{\eta} \phi^{i}(\eta)\|^2 d\eta d\xi + Ci^{-2}s^2
\]
The above calculation shows that $F_{i,j}''(s) \geq 0$, and so $F_{i,j}(s) \leq s F_{i,j}(1+ (1-s)F_{i,j}(0)$.  We will now pass to the limit as $j\rightarrow \infty$, and then $i\rightarrow \infty$.  First, by the $j$-independent bounds for $II$, and Corollary~\ref{cor: constSpeed} we get that 
\[
\lim_{j\rightarrow \infty}F_{i,j}(s) \rightarrow  d(R, \phi^{i}(s))^2-2\int_{0}^s\int_0^\xi \|\de_{\eta} \phi^{i}(\eta)\|^2 d\eta d\xi + Ci^{-2}s^2.
\]
Next we take the limit as $i\rightarrow \infty$.  Only the second term needs to be understood.  Again, by Corollary~\ref{cor: constSpeed} we obtain
\[
\lim_{i\rightarrow \infty}\int_{0}^s\int_0^\xi \|\de_{\eta} \phi^{i}(\eta)\|^2 d\eta d\xi = d(P,Q)^2 \int_0^s \xi d\xi = \frac{s^2}{2}d(P,Q)^2
\]
Putting everything together we obtain
\[
\limsup_{i\rightarrow \infty} d(R, \phi^i(s)) \leq s d(R,Q)^2 + (1-s)d(R,P)^2 -s(s-1)d(P,Q)^2
\]
which is the desired result.

\end{proof}
\begin{proposition}\label{geodesic-space-1}
Let $P, Q$ be two points in $\mathcal{H}$ and $\varphi_i(s),s\in[0,1]$ be the unique $i^{-1}$-geodesic from $P$ to $Q$. Then, for each $s\in [0,1]$,  $(\varphi_i(s))_{i\in \mathbb{N}}$ is an element in $\tilde{\mathcal{H}}$.  Furthermore, $\varphi(s)=[\varphi_i(s)],s\in[0,1]$ is a geodesic connecting $[P]$ and $[Q]$ in $\tilde{\mathcal{H}}$, and $\varphi(s)$ satisfies
\[
\tilde{d}(\varphi(s), \varphi(t)) = |t-s| \tilde{d}(P,Q)
\]
\end{proposition}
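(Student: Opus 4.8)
The plan is to show, for each fixed $s$, that the sequence $(\varphi_i(s))_i$ is $d$-Cauchy by feeding it into the non-positive curvature comparison inequality of Proposition~\ref{nonpositive-dist}, and then to identify the resulting point of $\tilde{\mathcal{H}}$ as a constant-speed geodesic using that same inequality together with the triangle inequality. \emph{Step 1 (exact distances from the division point to the endpoints).} First I would observe that, by Corollary~\ref{cor: constSpeed}, $\|\dot{\varphi}_i(t)\|^2 \to d(P,Q)^2$, and that this convergence is uniform in $t$: Lemma~\ref{Estimates for E}(i) gives $|\tfrac{d}{dt}\|\dot{\varphi}_i(t)\|^2| \le C i^{-2}$ with $C$ independent of $i$, so $\sup_t \big|\|\dot{\varphi}_i(t)\|^2 - d(P,Q)^2\big| \le \big|\|\dot{\varphi}_i(0)\|^2 - d(P,Q)^2\big| + C i^{-2} \to 0$. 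Consequently $\mathrm{length}(\varphi_i|_{[0,s]}) = \int_0^s \|\dot{\varphi}_i\|\,dt \to s\,d(P,Q)$ and $\mathrm{length}(\varphi_i|_{[s,1]}) \to (1-s)d(P,Q)$. Since $\varphi_i|_{[0,s]}$ joins $P$ to $\varphi_i(s)$ and $\varphi_i|_{[s,1]}$ joins $\varphi_i(s)$ to $Q$, we get $d(P,\varphi_i(s)) \le \mathrm{length}(\varphi_i|_{[0,s]})$ and $d(\varphi_i(s),Q) \le \mathrm{length}(\varphi_i|_{[s,1]})$, while the triangle inequality (Theorem~\ref{Metric structure}) gives $d(P,\varphi_i(s)) + d(\varphi_i(s),Q) \ge d(P,Q)$; squeezing yields
\[
\lim_{i\to\infty} d(P,\varphi_i(s)) = s\,d(P,Q), \qquad \lim_{i\to\infty} d(\varphi_i(s),Q) = (1-s)\,d(P,Q).
\]

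\emph{Step 2 (Cauchy property).} For each fixed $j$ the point $\varphi_j(s)$ lies in $\mathcal{H}$ (the $j^{-1}$-geodesic is a smooth path in $\mathcal{H}$ by Theorem~\ref{thm: CY}), so I apply Proposition~\ref{nonpositive-dist} with $R = \varphi_j(s)$, endpoints $P,Q$, the family of $i^{-1}$-geodesics $\varphi_i$, and $\lambda = s$:
\[
\limsup_{i\to\infty} d(\varphi_j(s),\varphi_i(s))^2 \le (1-s)\,d(\varphi_j(s),P)^2 + s\,d(\varphi_j(s),Q)^2 - s(1-s)\,d(P,Q)^2.
\]
Letting $j \to \infty$ and using Step 1, the right-hand side tends to $(1-s)s^2 d(P,Q)^2 + s(1-s)^2 d(P,Q)^2 - s(1-s)d(P,Q)^2 = s(1-s)\big[s+(1-s)-1\big]d(P,Q)^2 = 0$. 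Hence for any $\eta>0$ I may fix $j_0$ so large that $\limsup_{i\to\infty} d(\varphi_{j_0}(s),\varphi_i(s)) < \eta/2$, and then for all sufficiently large $i,i'$ the triangle inequality gives $d(\varphi_i(s),\varphi_{i'}(s)) < \eta$; so $(\varphi_i(s))_i$ is $d$-Cauchy. This defines $\varphi(s) := [\varphi_i(s)] \in \tilde{\mathcal{H}}$, with $\varphi(0) = [P]$ and $\varphi(1) = [Q]$ since $\varphi_i(0)\equiv P$, $\varphi_i(1)\equiv Q$.

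\emph{Step 3 (geodesic identity).} Using the isometric inclusion $\mathcal{H}\hookrightarrow\tilde{\mathcal{H}}$ (so $\tilde{d}([P],[Q]) = d(P,Q)$) and the definition of $\tilde{d}$, Step 1 gives $\tilde{d}([P],\varphi(s)) = s\,\tilde{d}([P],[Q])$ and $\tilde{d}(\varphi(s),[Q]) = (1-s)\,\tilde{d}([P],[Q])$ for all $s$. For $0\le s\le t\le 1$, the upper bound $\tilde{d}(\varphi(s),\varphi(t)) = \lim_i d(\varphi_i(s),\varphi_i(t)) \le \lim_i \mathrm{length}(\varphi_i|_{[s,t]}) = (t-s)d(P,Q)$ follows exactly as in Step 1, while writing out the triangle inequality for $[P],\varphi(s),\varphi(t),[Q]$ and substituting the two identities just displayed yields $\tilde{d}(\varphi(s),\varphi(t)) \ge (t-s)d(P,Q)$. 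Hence $\tilde{d}(\varphi(s),\varphi(t)) = |t-s|\,\tilde{d}([P],[Q])$, as claimed.

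The only substantial ingredient is Proposition~\ref{nonpositive-dist}, which is already available; granting it, the step most deserving of care is Step~1, since one needs the \emph{exact} limits $s\,d(P,Q)$ and $(1-s)d(P,Q)$ for the distances to the endpoints — mere inequalities would not force the comparison quantity in Step~2 to vanish, and this collapse is precisely what produces the Cauchy property. Everything else is the standard CAT$(0)$-type argument, in the spirit of \cite{CalabiChen2002}, that division points of approximating near-geodesics converge to a genuine geodesic in the completion.
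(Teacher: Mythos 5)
Your proof is correct and follows essentially the same route as the paper: both feed $R=\varphi_j(s)$ into Proposition~\ref{nonpositive-dist}, use Corollary~\ref{cor: constSpeed} to establish the exact limits $d(\varphi_j(s),P)\to s\,d(P,Q)$ and $d(\varphi_j(s),Q)\to (1-s)\,d(P,Q)$, observe that the right-hand side of the comparison inequality then vanishes, and finally extract the constant-speed identity by squeezing between the length upper bound and the triangle inequality. One small difference worth noting: the paper invokes pre-compactness (from the uniform $C^{1,\alpha}$ estimates of Theorem~\ref{thm: CY}, together with Lemma~\ref{conv-coin}) to pass from the double-limsup condition to membership in $\tilde{\mathcal{H}}$, whereas you deduce the Cauchy property directly from the double limsup via the triangle inequality with $\varphi_{j_0}(s)$ as an anchor; your route is a bit more self-contained and avoids the compactness step, and your Step~1 spells out the squeeze that the paper leaves implicit when it says ``Corollary~\ref{cor: constSpeed} implies.''
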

\begin{proof}

 For each $\lambda\in[0,1]$ and $k>j\in \mathbb{N}$, we apply Proposition \ref{nonpositive-dist} with $R=\varphi_j(\lambda)$,
\begin{equation}
\begin{split}
\limsup_{k\rightarrow +\infty}d(\varphi_j(\lambda),\varphi_{k}(\lambda))^2&\leq (1-\lambda) d(\varphi_j(\lambda),P)^2+\lambda d(\varphi_j(\lambda),Q)^2 \\
&\quad -\lambda(1-\lambda)d(P,Q)^2.
\end{split}
\end{equation}

Since $\varphi_j$ is a $j^{-1}$-geodesic from $P$ to $Q$, Corollary~\ref{cor: constSpeed} implies
\[
\lim_{j\rightarrow \infty}d(\varphi_j(\lambda),P)^2 = \lambda^2 d(P,Q)^2 .
\]
Similarly,
\[
\lim_{j\rightarrow \infty}d(\varphi_j(\lambda),Q)^2 =(1-\lambda)^2 d(P,Q)^2 .
\]
In fact, for all $0\leq s\leq t\leq 1$,
\begin{equation}
\lim_{j\rightarrow \infty}d(\varphi_j(s),\varphi_j(t))^2 = (s-t)^2d(P,Q)^2 .
\end{equation}

Therefore, for any $\lambda\in [0,1]$,
\begin{equation}
\limsup_{j\rightarrow +\infty}\limsup_{k\rightarrow +\infty}d(\varphi_j(\lambda),\varphi_{k}(\lambda))^2=0.
\end{equation}
Together with pre-compactness following from Theorem~\ref{thm: CY} and Lemma \ref{conv-coin}, we have $(\varphi_i (\cdot))\in\tilde{\mathcal{H}}$. On the other hand, using above estimates and triangle inequality,
\begin{equation}
\begin{split}\tilde d([P],[Q])&\leq \limsup_{j\rightarrow \infty}d(\varphi_j(s),P)+\limsup_{j\rightarrow \infty}d(\varphi_j(s),\varphi_j(t))+\limsup_{j\rightarrow \infty}d(\varphi_j(t),Q)\\
&\leq d(P,Q).
\end{split}
\end{equation}

Since $P,Q\in \mathcal{H}$, it follows that $d(P,Q)=\tilde d([P],[Q])$ and thus
\begin{equation}\label{geodesic-distance-esti}
\tilde d([\varphi_i(s)],[\varphi_i(t)])=|s-t| d(P,Q)
\end{equation}
for any $0\leq s\leq t\leq 1$. Thus, $[\varphi_i(\cdot)]$ is a geodesic from $[P]$ to $[Q]$.
\end{proof}

\begin{corollary}\label{geodesic-space-2}
$(\tilde{\mathcal{H}},\tilde d)$ is a $\mathrm{CAT}(0)$ space.
\end{corollary}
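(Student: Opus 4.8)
The plan is to reduce everything to the classical Bruhat--Tits characterization of $\mathrm{CAT}(0)$ spaces: a complete metric space $(Y,d_Y)$ is $\mathrm{CAT}(0)$ if and only if every pair $x,y\in Y$ admits a point $m\in Y$ with
\[
d_Y(z,m)^2\leq \tfrac12 d_Y(z,x)^2+\tfrac12 d_Y(z,y)^2-\tfrac14 d_Y(x,y)^2\qquad\text{for all }z\in Y .
\]
Taking $z=x$ and then $z=y$ forces such an $m$ to be a genuine midpoint of $x,y$, so (by completeness) $Y$ is a geodesic space, and a complete geodesic space satisfying this (CN) inequality is $\mathrm{CAT}(0)$; this is exactly the mechanism exploited by Calabi--Chen in the parallel setting of the space of K\"ahler potentials \cite{CalabiChen2002}. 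Now $(\tilde{\mathcal{H}},\tilde d)$ is complete by construction, and $\mathcal{H}$ embeds isometrically onto a dense subset via constant Cauchy sequences, so the whole task is to verify the (CN) inequality, and the only real work is to promote Proposition~\ref{nonpositive-dist} off of $\mathcal{H}$ onto all of $\tilde{\mathcal{H}}$.

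First I would establish (CN) when $x=P$, $y=Q$ lie in $\mathcal{H}$ and $z$ is arbitrary in $\tilde{\mathcal{H}}$. Let $\varphi_i$ be the $i^{-1}$-geodesic from $P$ to $Q$; by Proposition~\ref{geodesic-space-1} the class $m:=[\varphi_i(\tfrac12)]$ is a well-defined point of $\tilde{\mathcal{H}}$ and is a genuine midpoint of $P$ and $Q$. Since $(\varphi_i(\tfrac12))_i$ is $d$-Cauchy, $d(R,\varphi_i(\tfrac12))\to\tilde d(R,m)$ for any $R\in\mathcal{H}$, so the case $\lambda=\tfrac12$ of Proposition~\ref{nonpositive-dist} becomes precisely $\tilde d(R,m)^2\leq\tfrac12\tilde d(R,P)^2+\tfrac12\tilde d(R,Q)^2-\tfrac14\tilde d(P,Q)^2$ for every $R\in\mathcal{H}$. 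Approximating an arbitrary $z\in\tilde{\mathcal{H}}$ by points of $\mathcal{H}$ and using continuity of $\tilde d$, this inequality extends to all $z\in\tilde{\mathcal{H}}$; that is, for every $P,Q\in\mathcal{H}$ their midpoint $m$ satisfies (CN) against every point of $\tilde{\mathcal{H}}$.

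Finally I would treat arbitrary endpoints $x,y\in\tilde{\mathcal{H}}$. Choose $x_k,y_k\in\mathcal{H}$ with $x_k\to x$, $y_k\to y$, and let $m_k$ be the midpoint of $x_k,y_k$ supplied by the previous step. To see that $(m_k)$ is Cauchy, apply (CN) with endpoints $x_k,y_k$, midpoint $m_k$, and third point $m_\ell$ (legitimate, since the third point may be arbitrary in $\tilde{\mathcal{H}}$), and bound $\tilde d(m_\ell,x_k)\leq \tilde d(m_\ell,x_\ell)+\tilde d(x_\ell,x_k)=\tfrac12\tilde d(x_\ell,y_\ell)+\tilde d(x_\ell,x_k)$ together with the analogous estimate for $y$; since $\tilde d(x_\ell,x_k)\to 0$ and $\tilde d(x_\ell,y_\ell)\to\tilde d(x,y)$, this yields $\limsup_{k,\ell\to\infty}\tilde d(m_k,m_\ell)^2\leq \tfrac14\tilde d(x,y)^2+\tfrac14\tilde d(x,y)^2-\tfrac14\tilde d(x,y)^2=0$. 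By completeness $m_k\to m$ for some $m\in\tilde{\mathcal{H}}$, and continuity of $\tilde d$ shows $m$ is a midpoint of $x,y$ and that (CN) passes to the limit against every $z\in\tilde{\mathcal{H}}$. The Bruhat--Tits criterion then gives the corollary. The one delicate point is this last Cauchy estimate for the approximate midpoints $m_k$; it is, however, exactly the step that appears in the proof of the Bruhat--Tits theorem itself, and it goes through here unchanged once Propositions~\ref{nonpositive-dist} and~\ref{geodesic-space-1} have furnished the (CN) inequality and the midpoints on the dense subset $\mathcal{H}$.
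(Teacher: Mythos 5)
Your proof is correct and follows essentially the same strategy as the paper: approximate arbitrary points of $\tilde{\mathcal{H}}$ by points of $\mathcal{H}$, use the $\epsilon$-geodesics (via Propositions~\ref{geodesic-space-1} and~\ref{nonpositive-dist}) to get midpoints and the comparison inequality on the dense subset, and pass to the limit. The paper organizes this by directly constructing geodesics between arbitrary points of $\tilde{\mathcal{H}}$ and then invoking Bridson--Haefliger for the CAT(0) conclusion; you instead isolate the Bruhat--Tits (CN)/midpoint criterion and carry out the approximation argument explicitly, which makes the reduction to Proposition~\ref{nonpositive-dist} transparent. One bookkeeping point worth fixing: in the final $\limsup$ estimate you wrote $\tfrac14\tilde d(x,y)^2+\tfrac14\tilde d(x,y)^2-\tfrac14\tilde d(x,y)^2=0$, which does not vanish. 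The (CN) inequality applied with endpoints $x_k,y_k$, midpoint $m_k$ and comparison point $m_\ell$ carries the prefactors $\tfrac12$ on the first two terms, so the correct limit is $\tfrac12\cdot\tfrac14\tilde d(x,y)^2+\tfrac12\cdot\tfrac14\tilde d(x,y)^2-\tfrac14\tilde d(x,y)^2=0$; with this correction the Cauchy estimate for the approximate midpoints goes through as intended.
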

\begin{proof}
We first show that $(\tilde{\mathcal{H}},\tilde d)$ is a geodesic metric space. Let $P$ and $Q$ be two points in $\tilde{\mathcal{H}}$ where $P=[P_i]$ and $Q=[Q_i]$ are represented by two Cauchy sequences in $(\mathcal{H},d)$. We will also regard each $P_i,Q_i$ as elements in $\tilde{\mathcal{H}}$. By Proposition \ref{geodesic-space-1}, for each $i$ we can find a geodesic $\varphi_{i}(s),s\in [0,1]$ in $\tilde{\mathcal{H}}$ connecting $P_i$ to $Q_i$. We first claim that $\varphi_i(s)$ is Cauchy with respect to $\tilde d$ for each $s\in [0,1]$. By applying Proposition \ref{nonpositive-dist} together with Proposition \ref{geodesic-space-1} and \eqref{geodesic-distance-esti}, for each $i,j$,
\begin{equation}
\begin{split}
\tilde d(\varphi_i(\lambda),\varphi_j(\lambda))^2&\leq (1-\lambda)\tilde d(\varphi_j(\lambda),P_i)^2+\lambda \tilde d(\varphi_j(\lambda),Q_i)^2\\
&\quad -\lambda(1-\lambda)\tilde d(P_i,Q_i)^2\\
&\leq (1-\lambda)\left( \lambda \tilde d(P_j,Q_j)+\tilde d(P_j,P_i)\right)^2\\
&\quad +\lambda\left( (1-\lambda )\tilde d(P_j,Q_j)+\tilde d(Q_j,Q_i)\right)^2\\
&\quad -\lambda(1-\lambda)\tilde d(P_i,Q_i)^2.
\end{split}
\end{equation}
The right hand side converges to $0$ as $i,j\rightarrow +\infty$ since $P_i$ and $Q_i$ are Cauchy sequence in $\mathcal{H}$ and hence $\tilde{\mathcal{H}}$. This shows that $\varphi_i(\cdot )\rightarrow \varphi(\cdot)$ on $[0,1]$ as $i\rightarrow +\infty$. In particular, $P$ and $Q$ can be connected by a curve $\varphi(s),s\in[0,1]$. Moreover, from \eqref{geodesic-distance-esti} we get that for $0\leq s\leq t\leq 1$, $\tilde d(\varphi_i(t),\varphi_i(s))=|s-t| \tilde d(P_i,Q_i)$ and hence $\tilde d(\varphi(t),\varphi(s))=|s-t| \tilde d(P,Q)$
for any $P,Q\in\tilde{\mathcal{H}}$. The structure of $\mathrm{CAT}(0)$ follows from the inequality inherited form Proposition \ref{nonpositive-dist}, see \cite{BridsonHaefliger1999} for example.
\end{proof}

\section{$C^{1,1}$ regularity of geodesics}\label{C11 section}
In this section, we obtain an improved regularity result for geodesics in the space $\mathcal{H}$.  Namely, we improve the regularity in Theorem~\ref{thm: CY} to full $C^{1,1}$ regularity.  Recall from Section~\ref{Geodesics section} that the $\epsilon$ geodesic equation can be written as PDE on $\mathcal{X}=X\times\mathcal{A}$.  Namely, consider the $\hat{\omega}_{\epsilon} := \pi^{*}\omega+\epsilon^2\sqrt{-1}dt\wedge d\ov{t}$.  Then the $\epsilon$ geodesic equation is
\[
\Theta_{\hat{\omega}_{\epsilon}}(\pi^{*}\alpha + \mn D\ov{D}\Phi) = \hat{\theta}.
\]
Since the metric $\hat{\omega}_{\epsilon}$ becomes degenerate when $\epsilon\rightarrow0$ it is more convenient to rescale.  Define
\[
\mathcal{X}_{\epsilon}=X\times\mathcal{A}_{\epsilon}, \quad \mathcal{A}_{\epsilon}=\{t\in\mathbb{C}~|~e^{-1}\epsilon\leq |t| \leq \epsilon\}.
\]
After rescaling $t\mapsto \epsilon t$, the $\epsilon$-geodesic equation becomes the deformed Hermitian-Yang-Mills equation on $\mathcal{X}_{\epsilon}$ with boundary data, and background metric $\hat{\omega} := \hat{\omega}_{1}$;
\[
\begin{cases}
{\rm Im}\left(e^{-\sqrt{-1}\hat{\theta}}\left(\pi^{*}\omega+dt\wedge d\ov{t}
+\sqrt{-1}(\pi^{*}\alpha+\sqrt{-1}D\ov{D}\Phi^{\epsilon})\right)^{n+1}\right) = 0, \\[2mm]
{\rm Re}\left(e^{-\sqrt{-1}\hat{\theta}}\left(\omega+\sqrt{-1}(\alpha+\ddbar\Phi^{\epsilon})\right)^{n}\right) > 0, \\[2mm]
\Phi^{\epsilon}(\cdot,1) = \vp_{0}, \ \Phi^{\epsilon}(\cdot,e^{-1}) = \vp_{1}.
\end{cases}
\]

In order to study the existence and regularity of the above equation, the second author and Yau \cite{CY18} considered the specified Lagrangian phase equation on $(\mathcal{X}_{\epsilon},\hat{\omega})$:
\begin{equation}\label{SLPE}
\begin{cases}
F(\ti{\alpha}_{\vp}) := \Theta_{\hat{\omega}}( \ti{\alpha}_{\vp})=\sum_{i=0}^{n}\arctan(\mu_{i}) = h, \\[1mm]
\vp(\cdot,t)|_{|t|=\epsilon} = \vp_{0}, \ \vp(\cdot,t)|_{t=e^{-1}\epsilon}= \vp_{1},
\end{cases}
\end{equation}
where $\ti{\alpha}_{\vp}=\pi^{*}\alpha+\sqrt{-1}D\ov{D}\vp$, $\mu_{i}$ are eigenvalues of $\ti{\alpha}_{\vp}$ with respect to $\hat{\omega}$, $h: \mathcal{X}_{\epsilon}\rightarrow\left((n-1)\frac{\pi}{2}+\eta,(n+1)\frac{\pi}{2}-\eta\right)$ is a $S^{1}$ invariant function on $\mathcal{X}_{\epsilon}$, and $\vp_{0},\vp_{1}\in\mathcal{H}$, and have introduced the notation $F$ for convenience (and to be consistent with \cite{CY18}).

The second author and Yau \cite{CY18} proved that there exists a constant $C$ independent of $\epsilon$ such that
\begin{equation}\label{Spatial estimate}
\sup_{\mathcal{X}_{\epsilon}}|\vp|+\sup_{\mathcal{X}_{\epsilon}}|\nabla^{X}\vp|+\sup_{\mathcal{X}_{\epsilon}}|\nabla^{X}\ov{\nabla^{X}}\vp| \leq C,
\end{equation}
\[
\sup_{\mathcal{X}_{\epsilon}}|\nabla^{X}\nabla_{\ov{t}}\vp|+\sup_{\mathcal{X}_{\epsilon}}|\nabla_{t}\vp| \leq C \epsilon^{-1},
\]
\[
\sup_{\mathcal{X}_{\epsilon}}|\nabla_{t}\nabla_{\ov{t}}\vp| \leq C \epsilon^{-2}.
\]

To prove Theorem \ref{C11 regularity}, it suffices to prove the following real Hessian estimate for $\vp$:

\begin{theorem}\label{C11 estimate}
Let $\vp$ solve the specified Lagrangian phase equation (\ref{SLPE}). Then there exists a constant $C$ depending only on $\sup_{\mathcal{X}_{\epsilon}}|\nabla^{X}\vp|$, $\sup_{\mathcal{X}_{\epsilon}}|\nabla^{X}\ov{\nabla^{X}}\vp|$,  $h$, $\alpha$ and $(X,\omega)$ such that
\[
\sup_{\mathcal{X}_{\epsilon}}|\nabla^{X}\nabla^{X}\vp| \leq C.
\]
\end{theorem}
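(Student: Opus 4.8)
The plan is to reduce Theorem~\ref{C11 estimate} to an upper bound on the largest eigenvalue of the real Hessian and to obtain that bound by a maximum principle argument. Since~\eqref{Spatial estimate} already controls $\vp$, its spatial gradient $\nabla^{X}\vp$ and its spatial complex Hessian $\nabla^{X}\ov{\nabla^{X}}\vp$, and since $\mathrm{tr}_{\mathbb{R}}(\nabla^{X})^{2}\vp$ equals, up to a positive constant, the trace of $\nabla^{X}\ov{\nabla^{X}}\vp$ and is therefore bounded, it suffices to bound from above the largest eigenvalue $\lambda_{1}$ of the real symmetric form $(\nabla^{X})^{2}\vp$ on $T_{\mathbb{R}}X$: a bound $\lambda_{1}\le C$ then forces $|\nabla^{X}\nabla^{X}\vp|\le C$. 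To this end I would run a maximum principle for
\[
Q := \log\lambda_{1} + \psi\big(|\nabla^{X}\vp|^{2}\big) - B\vp,
\]
where $\psi(\tau) = -\tfrac12\log\!\big(1-\tfrac{\tau}{2K}\big)$ with $K := 1+\sup_{\mathcal{X}_{\epsilon}}|\nabla^{X}\vp|^{2}$ (so $\psi'\in[\tfrac1{4K},\tfrac1{2K}]$ and $\psi''=2(\psi')^{2}$) and $B$ a large constant to be fixed. On $\partial\mathcal{X}_{\epsilon} = X\times\{|t|=\epsilon\}\sqcup X\times\{|t|=e^{-1}\epsilon\}$ we have $\vp=\vp_{0}$ or $\vp_{1}$, so the spatial real Hessian of $\vp$ there is just $(\nabla^{X})^{2}\vp_{i}$, which is bounded; hence $\lambda_{1}\le C$ on $\partial\mathcal{X}_{\epsilon}$ and we may assume $Q$ attains its maximum over $\overline{\mathcal{X}_{\epsilon}}$ at an interior point $x_{0}$.

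Next I would set up the linearization. Write $F^{p\bar q} := \partial F/\partial(\ti\alpha_{\vp})_{p\bar q}$ for the coefficients of the linearized operator $L := F^{p\bar q}\nabla_{p}\nabla_{\bar q}$ on $(\mathcal{X}_{\epsilon},\hat\omega)$. Two structural facts from \cite{CY18}, both consequences of $h>(n-1)\tfrac\pi2$ (supercritical phase for the $(n+1)$-dimensional $\mathcal{X}_{\epsilon}$), will be used: first, the level sets of $F$ bound convex sets, giving the concavity inequality $-F^{p\bar q,r\bar s}\nabla_{e}(\ti\alpha_{\vp})_{p\bar q}\nabla_{e}(\ti\alpha_{\vp})_{r\bar s}\le 0$ for every real direction $e$; second, at most one eigenvalue $\mu_{i}$ of $\ti\alpha_{\vp}$ is negative and it is bounded below, which, combined with the boundedness of the spatial complex Hessian, gives the uniform ellipticity $\sum_{p}F^{p\bar p}\ge c_{0}>0$ and $F^{p\bar p}\ge c_{0}/(1+\mu_{p}^{2})$. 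It is important that, since $\nabla^{X}\ov{\nabla^{X}}\vp$ is bounded while $\vp_{t\bar t}=O(\epsilon^{-2})$, exactly one eigenvalue of $\ti\alpha_{\vp}$ — essentially the one in the $\mathcal{A}_{\epsilon}$-direction — is of order $\epsilon^{-2}$, so $L$ is uniformly elliptic only in the spatial directions; this places us in the situation of Chu--Tosatti--Weinkove's treatment of degenerate complex Monge--Amp\`ere equations \cite{CTW17}, and it dictates that the test function and the computation be arranged so that no $\mathcal{A}_{\epsilon}$-derivative of $\vp$ survives in the final inequality.

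Then I would carry out the computation at $x_{0}$. Choose holomorphic coordinates on $X$ simultaneously diagonalising $\omega$ and $\alpha_{\vp}$ and normal to second order, and a real orthonormal frame of $T_{\mathbb{R}}X$ diagonalising $(\nabla^{X})^{2}\vp$; since $\lambda_{1}$ need not be smooth, replace it by the largest eigenvalue of a nearby matrix with simple spectrum so that $LQ\le 0$ applies. Differentiating $F(\ti\alpha_{\vp})=h$ once in the top real eigendirection $e_{1}$ bounds the third-order quantities $\nabla_{e_{1}}(\ti\alpha_{\vp})_{p\bar q}$ in terms of $\nabla_{e_{1}}h$; differentiating twice and using concavity gives
\[
L\big(\nabla_{e_{1}}\nabla_{e_{1}}\vp\big) \ge -F^{p\bar q,r\bar s}\nabla_{e_{1}}(\ti\alpha_{\vp})_{p\bar q}\nabla_{e_{1}}(\ti\alpha_{\vp})_{r\bar s} - C(1+\lambda_{1}),
\]
hence $L(\log\lambda_{1})\ge \lambda_{1}^{-1}(\text{good concave term}) - \lambda_{1}^{-2}|\nabla\lambda_{1}|^{2} - C$. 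The heart of the matter is to absorb the bad third-order contribution $\lambda_{1}^{-2}\sum_{p}F^{p\bar p}|\nabla_{p}(\nabla_{e_{1}}\nabla_{e_{1}}\vp)|^{2}$: splitting the sum over $p$ according to whether $\mu_{p}$ is comparable to the large eigenvalue or not, the first part is dominated by a fixed fraction of the good concave term (using strict concavity of $\arctan$ on $(0,\infty)$) and the second by $\psi'\sum_{p}F^{p\bar p}|\nabla_{p}\nabla^{X}\vp|^{2}$, produced by $L\big(\psi(|\nabla^{X}\vp|^{2})\big)$ via $\psi''=2(\psi')^{2}$. After these cancellations $LQ\le 0$ reduces to $0\ge c\,\psi'\sum_{p}F^{p\bar p} - B\,L\vp - C'$, and since $L\vp = F^{p\bar q}(\ti\alpha_{\vp}-\pi^{*}\alpha)_{p\bar q}$ is bounded while $\sum_{p}F^{p\bar p}\ge c_{0}$, taking $B$ large yields a contradiction unless $\lambda_{1}(x_{0})\le C$. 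This bounds $Q$, and hence $\lambda_{1}$, everywhere.

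The main obstacle is the third-order term estimate in the presence of the degenerate eigenvalue. Concavity of $F$ alone is not enough: one needs a quantitative index-splitting, in the style of Chu--Tosatti--Weinkove \cite{CTW17}, to distribute the bad terms between the concavity term and the gradient term, and one must check that the resulting constants $c$, $c_{0}$ are independent of $\epsilon$ even though one eigenvalue of $\ti\alpha_{\vp}$ is of size $O(\epsilon^{-2})$. A related subtlety is the commutation of covariant derivatives: because $L$ contains $F^{t\bar t}\nabla_{t}\nabla_{\bar t}$ while the $\mathcal{A}_{\epsilon}$-derivatives of $\vp$ are controlled only by $O(\epsilon^{-1})$ and $O(\epsilon^{-2})$, one must exploit that $\mathcal{X}_{\epsilon}=X\times\mathcal{A}_{\epsilon}$ is a Riemannian product, flat in the $\mathcal{A}_{\epsilon}$ factor, so that all mixed curvature and commutator terms vanish; combined with the $S^{1}$-invariance of $\vp$, this ensures the uncontrolled $\mathcal{A}_{\epsilon}$-derivatives never enter the estimate.
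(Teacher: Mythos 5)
Your overall strategy—a maximum principle for $\log$ of the largest spatial real second derivative plus a gradient barrier $\psi(|\nabla^X\vp|^2)$, diagonalising $\ti\alpha_\vp$, and using $f''=(f')^2$-type cancellation with the first-order critical point conditions—matches the skeleton of the paper's proof. But there are two concrete gaps. The first is the appeal to concavity: the operator $F(\ti\alpha)=\sum_{i=0}^n\arctan(\mu_i)$ on the $(n+1)$-dimensional space $\mathcal{X}_\epsilon$ is \emph{not} concave in the regime $h\in\big((n-1)\tfrac{\pi}{2},n\tfrac{\pi}{2}\big)$, because in that range one eigenvalue $\mu_n$ can be negative, and then $F^{n\bar n,n\bar n}=-2\mu_n/(1+\mu_n^2)^2>0$; the level sets are convex, but that does not give $-F^{p\bar q,r\bar s}\nabla_e\ti\alpha_{p\bar q}\nabla_e\ti\alpha_{r\bar s}$ a sign. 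The paper's Lemma~\ref{Third order terms} is precisely the substitute for this missing concavity: it absorbs the single dangerous term with $\mu_n<0$ into the $\mu_k>0$ terms by the Cauchy--Schwarz argument from \cite{CY18}, using $\sum_{k=0}^{n-1}\mu_k^{-1}\leq\min\big(n/\tan\eta,\ -\tan\eta+|\mu_n|^{-1}\big)$. A "Chu--Tosatti--Weinkove index-split" as normally formulated does not apply verbatim here because the operator is $\arctan$-type rather than Monge--Amp\`ere; this is exactly where the hypercritical phase hypothesis enters in a structural way.

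The second gap is the closing inequality. You end with $0\geq c\,\psi'\sum_p F^{p\bar p}-B\,L\vp-C'$, but $L\vp=F^{p\bar q}(\ti\alpha_\vp-\pi^*\alpha)_{p\bar q}$ is \emph{uniformly bounded} in this problem, and $\sum_p F^{p\bar p}$ is bounded above and below by fixed constants independently of $\lambda_1$. So no term grows with $\lambda_1$, and increasing $B$ does not produce the contradiction you claim; the barrier $-B\vp$ in fact plays no useful role here. The paper does not use $-B\vp$ at all: after the cancellations it is left with the residual good term $\sum_{i=0}^n\sum_{k=1}^n F^{i\bar i}(|\de_iW_k\vp|^2+|\de_iW_{\bar k}\vp|^2)\leq C$, and converting that into a bound on $\sum_{i,j=1}^n|W_iW_j\vp|$ requires the quantitative control on the coordinate change (Lemma~\ref{Estimate 2}: $|\rho_{0i}|,|\rho^{0i}|\leq C/\sqrt{\mu_0}$ and $|\mu_i|\leq C$ for $i\geq1$), followed by a case split according to whether $\mu_0$ is bounded or large. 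Your proposal has no analogue of this step, and this is where the bound on $\lambda_1$ is actually extracted; you would need to supply something equivalent.
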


\subsection{Some properties of Lagrangian operator}

For convenience, we denote $\ti{\alpha}_{\vp}$ by $\ti{\alpha}$, and use the following notations:
\[
F^{i\ov{j}} = \frac{\de F}{\de \ti{\alpha}_{i\ov{j}}}, \
F^{i\ov{j},k\ov{l}} = \frac{\de^{2} F}{\de \ti{\alpha}_{i\ov{j}}\de \ti{\alpha}_{k\ov{l}}}.
\]
For any point $x_{0}\in \mathcal{X}_{\epsilon}$, let $\{z_{i}\}_{i=0}^{n}$ be a local coordinate system centered at $x_{0}$ such that
\[
\hat{g}_{i\ov{j}} = \delta_{ij}, \ \ti{\alpha}_{i\ov{j}} = \delta_{ij}\mu_{i},
\ \mu_{0} \geq \mu_{1} \geq \cdots \geq \mu_{n} \ \text{at $x_{0}$}.
\]
Then at $x_{0}$, we have (see e.g. \cite{Gerhardt96,Spruck05})
\[
F^{i\ov{j}} = \frac{\delta_{ij}}{1+\mu_{i}^{2}}
\]
and
\[
F^{i\ov{j},k\ov{l}} =
\begin{cases}
F^{i\ov{j},j\ov{i}} \quad &  \mbox{if $i=l, k=j$};  \\
0   \quad &  \mbox{otherwise},
\end{cases}
\]
where
\[
F^{i\ov{j},j\ov{i}} = -\frac{\mu_{i}+\mu_{j}}{(1+\mu_{i}^{2})(1+\mu_{j}^{2})}.
\]

\begin{lemma}\label{Properties of Lagrangian}
Suppose that $\mu_{0}\geq\mu_{1}\geq\cdots\geq\mu_{n}$ satisfy
\[
\sum_{i=0}^{n}\arctan(\mu_{i}) \geq (n-1)\frac{\pi}{2}+\eta
\]
for some $\eta>0$. We have
\begin{enumerate}
\item $\mu_{0}\geq\mu_{1}\geq\cdots\geq\mu_{n-1}>0$ and $\mu_{n-1}+\mu_{n}\geq0$.

\vspace{3pt}

\item $\mu_{n-1}\geq\tan(\frac{\eta}{2})$ and $\mu_{n}\geq -\cot(\eta)$.

\vspace{3pt}

\item If $\mu_{n}<0$, then
\[
\mu_{n-1} \geq \tan(\eta_{1})  \ \text{and} \  \sum_{i=0}^{n}\frac{1}{\mu_{i}} < -\tan(\eta_{1}).
\]
\end{enumerate}
\end{lemma}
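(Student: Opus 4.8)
The plan is to exploit the monotonicity of $\arctan$ together with the constraint $\sum_{i=0}^n \arctan(\mu_i)\ge (n-1)\tfrac{\pi}{2}+\eta$ and the ordering $\mu_0\ge\cdots\ge\mu_n$. For part (1), suppose for contradiction that $\mu_{n-1}\le 0$. Then $\arctan(\mu_i)<\tfrac{\pi}{2}$ for all $i$, and moreover $\arctan(\mu_{n-1}),\arctan(\mu_n)\le 0$, so $\sum_{i=0}^n\arctan(\mu_i) < (n-1)\tfrac{\pi}{2}$, contradicting the hypothesis (for any $\eta>0$). Hence $\mu_{n-1}>0$, and a fortiori $\mu_0\ge\cdots\ge\mu_{n-1}>0$. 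For $\mu_{n-1}+\mu_n\ge 0$: the numbers $\arctan(\mu_0),\dots,\arctan(\mu_{n-2})$ are each $<\tfrac{\pi}{2}$, so $\arctan(\mu_{n-1})+\arctan(\mu_n) > (n-1)\tfrac{\pi}{2}+\eta - (n-1)\tfrac{\pi}{2} = \eta > 0$, and since $\arctan$ is odd and increasing, $\arctan(\mu_{n-1})+\arctan(\mu_n)>0$ forces $\mu_{n-1} > -\mu_n$, i.e. $\mu_{n-1}+\mu_n\ge 0$ (in fact strictly, but $\ge 0$ suffices).

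For part (2), refine the estimate just used. Since the $n-1$ terms $\arctan(\mu_0),\dots,\arctan(\mu_{n-2})$ are each $<\tfrac{\pi}{2}$, we get $\arctan(\mu_{n-1})+\arctan(\mu_n) > \eta$. Using $\arctan(\mu_n)<\tfrac{\pi}{2}$ (indeed $\le\arctan(\mu_{n-1})$) this yields $2\arctan(\mu_{n-1}) > \eta$, hence $\mu_{n-1} > \tan(\tfrac{\eta}{2})$, which gives the first inequality of (2) (with $\ge$). For the lower bound on $\mu_n$: from $\arctan(\mu_{n-1})+\arctan(\mu_n)>\eta$ and $\arctan(\mu_{n-1})<\tfrac{\pi}{2}$ we obtain $\arctan(\mu_n) > \eta - \tfrac{\pi}{2}$, so $\mu_n > \tan(\eta-\tfrac{\pi}{2}) = -\cot(\eta)$, as desired.

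For part (3), assume $\mu_n<0$. The key point is that one now has $n$ strictly positive eigenvalues $\mu_0,\dots,\mu_{n-1}$ and one negative one, and the constraint can be rewritten as $\sum_{i=0}^{n-1}\arctan(\mu_i) \ge (n-1)\tfrac{\pi}{2}+\eta - \arctan(\mu_n) > (n-1)\tfrac{\pi}{2}+\eta$ since $\arctan(\mu_n)<0$; equivalently, writing $\arctan(\mu_i) = \tfrac{\pi}{2}-\arctan(1/\mu_i)$ for $i\le n-1$ (valid since $\mu_i>0$), we get $\sum_{i=0}^{n-1}\arctan(1/\mu_i) + \arctan(1/\mu_n) < \tfrac{\pi}{2} - \eta$ (after also using $\arctan(\mu_n) = -\tfrac\pi2 - \arctan(1/\mu_n)$, valid since $\mu_n<0$; here one must track the branch of $\arctan(1/\mu_n)$ carefully — this sign/branch bookkeeping is the one genuinely delicate point). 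Thus $\sum_{i=0}^n \arctan(1/\mu_i) < \tfrac\pi2-\eta$. Now I would set $\eta_1$ to be whatever constant $\eta_1(\eta,n)\in(0,\tfrac\pi2)$ makes both claimed inequalities come out; concretely, since each of $\arctan(1/\mu_0),\dots,\arctan(1/\mu_{n-2})$ is positive (as $\mu_i>0$), we get $\arctan(1/\mu_{n-1}) + \arctan(1/\mu_n) < \tfrac\pi2-\eta$, and since $\arctan(1/\mu_n)<0$ this gives $\arctan(1/\mu_{n-1}) < \tfrac\pi2-\eta$, hence $1/\mu_{n-1} < \tan(\tfrac\pi2-\eta)=\cot(\eta)$, i.e. $\mu_{n-1} > \tan(\eta)$; and monotonicity of $\arctan$ together with the bound $\sum_{i=0}^n\arctan(1/\mu_i)<\tfrac\pi2-\eta$ combined with the fact that the $n$ positive terms sum to something less than $\tfrac\pi2$ forces $\arctan(\sum 1/\mu_i)$ — no: more simply, since $\arctan$ is concave on $[0,\infty)$ and subadditive there, and the remaining negative term... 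Here I would instead argue directly: among $\{1/\mu_i\}_{i=0}^{n-1}$ positive with $\arctan$-sum $<\tfrac\pi2-\eta-\arctan(1/\mu_n)$, the relation $\sum_i 1/\mu_i \le \tan(\sum_i \arctan(1/\mu_i))$ fails in general, so one uses instead that all but one are small: in fact the cleanest route, matching the statement, is to choose $\eta_1 = \eta_1(\eta)$ small enough that $\tan(\eta_1)$ is a valid lower bound in (1) of \cite[Lemma~3.1]{CY18}-type estimates, then deduce $\sum_{i=0}^n 1/\mu_i < -\tan(\eta_1)$ from $\sum_{i=0}^n\arctan(1/\mu_i)<\tfrac\pi2-\eta$ by the elementary inequality $\tan(x)\le \sum\tan(x_i)$... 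I expect this last elementary inequality chain to be the main obstacle, and I would resolve it by the standard trick (as in \cite{CJY15, CY18}): set $\eta_1 = \tfrac{\eta}{2}$ say, note $\mu_{n-1}\ge \tan(\tfrac\eta2)\ge\tan(\eta_1)$ from part (2), and for the sum bound use that $\arctan$ is superadditive-to-tangent for the negative term: writing $a = \sum_{i=0}^{n-1} 1/\mu_i > 0$ and $b = 1/\mu_n < 0$ with $\arctan(a') + \arctan(b)< \tfrac\pi2-\eta$ where $a' \le a$ can be taken as a single representative via concavity — concretely one shows $\arctan(a) + \arctan(b) \le \sum\arctan(1/\mu_i) < \tfrac\pi2-\eta$ is \emph{false} in general, so one instead only needs $b < -\tan(\eta_1)$ together with $a < \cot(\eta_1)$ appropriately; the honest fix is that $\sum_{i=0}^n 1/\mu_i = a+b$ and one proves $a+b<0$ with a quantitative gap by bounding $a \le (n) \tan\big(\tfrac{1}{n}\sum_{i=0}^{n-1}\arctan(1/\mu_i)\big)$ via concavity of $\arctan$ and comparing with $|b|$, which since $b=1/\mu_n$ and $\mu_n$ is bounded below by part (2) gives the result. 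I would present part (3) by first nailing down the precise value of $\eta_1$ and then executing this concavity comparison.
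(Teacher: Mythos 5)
The paper's own proof of this lemma is just a citation to \cite[Lemma~3.1]{CY18}, so there is no in-paper argument to compare against; what matters is whether your reconstruction is correct. Your treatments of parts (1) and (2) are fine. Part (3), however, has genuine gaps, and you flag several of them yourself; let me pin down what actually goes wrong and what the fix is.

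First, your reciprocal bound is not tight enough. You drop $\arctan(\mu_n)$ early (using only $\arctan(\mu_n)<0$) and arrive at $\sum_{i=0}^{n}\arctan(1/\mu_i)<\tfrac{\pi}{2}-\eta$. If instead you convert \emph{both} types of term via
\[
\arctan(\mu_i)=\tfrac{\pi}{2}-\arctan(1/\mu_i)\quad(\mu_i>0),\qquad \arctan(\mu_n)=-\tfrac{\pi}{2}-\arctan(1/\mu_n)\quad(\mu_n<0),
\]
then the hypothesis $\sum\arctan(\mu_i)\ge(n-1)\tfrac{\pi}{2}+\eta$ collapses to $\sum_{i=0}^{n}\arctan(1/\mu_i)\le-\eta$, not $<\tfrac{\pi}{2}-\eta$. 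This matters: the weaker bound cannot force $\sum 1/\mu_i$ to be negative, let alone $<-\tan(\eta_1)$. Second, the step where you deduce $\arctan(1/\mu_{n-1})<\tfrac{\pi}{2}-\eta$ from $\arctan(1/\mu_{n-1})+\arctan(1/\mu_n)<\tfrac{\pi}{2}-\eta$ and $\arctan(1/\mu_n)<0$ is logically backwards; subtracting a negative quantity from the right-hand side makes the bound larger, not smaller. (The first inequality of (3) has a much easier route you overlook: from your own part (2) derivation, $\arctan(\mu_{n-1})+\arctan(\mu_n)>\eta$, and now $\arctan(\mu_n)<0$ directly gives $\arctan(\mu_{n-1})>\eta$, i.e. $\mu_{n-1}>\tan(\eta)$.) Third, the ``concavity of $\arctan$'' route you sketch at the end points the wrong way: concavity gives a \emph{lower} bound on $\sum a_i$ in terms of $\sum\arctan(a_i)$, not an upper bound.

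The missing idea for the second inequality of (3) is subadditivity of $\arctan$ on $[0,\infty)$ together with the two-term tangent addition formula. Setting $A=\sum_{i=0}^{n-1}1/\mu_i>0$ and $b=1/\mu_n<0$, subadditivity ($\arctan(x)+\arctan(y)\ge\arctan(x+y)$ for $x,y\ge0$) gives $\arctan(A)\le\sum_{i=0}^{n-1}\arctan(1/\mu_i)$, hence $\arctan(A)+\arctan(b)\le-\eta$. Since $Ab<0<1$, the addition formula gives $\arctan\bigl(\tfrac{A+b}{1-Ab}\bigr)\le-\eta$, so $A+b\le-\tan(\eta)(1-Ab)<-\tan(\eta)$ (using $1-Ab>1$ and $Ab<0$). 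Thus $\sum_{i=0}^{n}1/\mu_i<-\tan(\eta)$, and one may take $\eta_1=\eta$, which also serves for the first inequality as noted above. This is consistent with the way the estimate is actually invoked later in the paper, where $\sum 1/\mu_k\le-\tan(\eta)$ is used in the proof of the third-order-terms lemma.
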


\begin{proof}
We refer the reader to \cite[Lemma 3.1]{CY18}.
\end{proof}

\subsection{Proof of Theorem \ref{C11 estimate}}

\begin{proof}[Proof of Theorem \ref{C11 estimate}]
We consider the following quantity:
\[
Q(p,t,\xi) = \log(\vp_{\xi\xi})+f(|\nabla^{X}\vp|^{2}),
\]
where $(p,t)\in\mathcal{X}_{\epsilon}$ and $\xi$ is a $g$-unit vector in $T_{p}X$ and
\[
f(s) = -\log\left(1+\sup_{\mathcal{X}_{\epsilon}}|\nabla^{X}\vp|^{2}-s\right).
\]
Let $(p_{0},t_{0},V)$ be the maximum point of $Q$. Near $p_{0}\in X$, we choose holomorphic normal coordinates $\{w_{i}\}_{i=1}^{n}$ for $(X,\omega)$ centered at $p_{0}$. We define $w_{0}=t-t_{0}$, then $\{w_{i}\}_{i=0}^{n}$ becomes a holomorphic coordinates for $(\mathcal{X}_{\epsilon},\hat{\omega})$ centered at $(p_{0},t_{0})$. For convenience, we denote $(p_{0},t_{0})$ by $x_{0}$. After making a linear change of coordinates, we obtain a new holomorphic coordinates $\{z_{i}\}_{i=0}^{n}$ such that
\[
\frac{\de\hat{g}_{i\ov{j}}}{\de z_{k}}=0, \ \hat{g}_{i\ov{j}}=\delta_{ij}, \ \ti{\alpha}_{i\ov{j}} = \delta_{ij}\mu_{i},
\ \mu_{0} \geq \mu_{1} \geq \cdots \geq \mu_{n} \ \ \text{at $x_{0}$},
\]
where $\ti{\alpha}_{i\ov{j}}=(\ti{\alpha}_{\vp})_{i\ov{j}}$.

We extend $V\in T_{p_{0}}X$ to be vector field near $x_{0}$ by taking the components to be constant. For convenience, we use the following notations:
\[
\de_{i} = \frac{\de}{\de z_{i}}, \ \de_{\ov{i}} = \frac{\de}{\de \ov{z}_{i}}, \ \ W_{i} = \frac{\de}{\de w_{i}}, \ W_{\ov{i}} = \frac{\de}{\de \ov{w}_{i}}
\]
and
\[
\ g_{VV}=g(V,V), \ g_{W_{i}W_{\ov{j}}} = g(W_{i},W_{\ov{j}}), g^{W_{i}W_{\ov{j}}} = (g_{W_{i}W_{\ov{j}}})^{-1}.
\]

We note that $W_{0}=\de_{t}$ is time vector field, and $V$, $W_{i}$ ($1\leq i\leq n$) are spatial vector fields. By the definitions of $\{w_{i}\}_{i=0}^{n}$ and $\{z_{i}\}_{i=0}^{n}$,  the components of $\de_{i}$ in the basis $\{W_{i}\}_{i=0}^{n}$ are constants and vice versa. We assume
\[
V = \sum_{j=1}^{n}(v_{j}W_{j}+\ov{v_{j}}W_{\ov{j}}), \
\de_{i} = \sum_{j=0}^{n}\rho_{ij}W_{j}, \
W_{i} = \sum_{j=0}^{n}\rho^{ij}\de_{j},
\]
where $v_{i}$ are constants, $(\rho_{ij})$ and $(\rho^{ij})$ is a constant unitary matrices, and $(\rho^{ij})$ is the inverse of $(\rho_{ij})$.

Near $x_{0}$, we define
\[
\hat{Q} = \log(g_{VV}^{-1}\vp_{VV})+f(|\nabla^{X}\vp|^{2})
\]
It is clear that $\hat{Q}$ achieves its maximum at $x_{0}$.

\begin{lemma}\label{Estimate 1}
At $x_{0}$, we have
\begin{enumerate}
\item $|W_{i}W_{\ov{j}}(\vp)| \leq C$ for $1\leq i,j\leq n$.

\vspace{3pt}

\item $|W_{0}W_{\ov{0}}(\vp)|=|W_{0}W_{0}(\vp)| \leq C\mu_{0}$.

\vspace{3pt}

\item $|W_{i}W_{\ov{0}}(\vp)|=|W_{i}W_{0}(\vp)| \leq C\sqrt{\mu_{0}}$ for $1\leq i\leq n$.
\end{enumerate}
\end{lemma}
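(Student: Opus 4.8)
The plan is to extract bounds (2) and (3) from the eigenvalue structure of $\ti\alpha_\vp$, feeding in Lemma~\ref{Properties of Lagrangian} and the spatial estimate~\eqref{Spatial estimate}, and to obtain the remaining (holomorphic--holomorphic) second derivatives from the $S^1$-invariance of $\vp$. Two structural observations are used throughout. First, at $x_0$ the frame $\{W_a\}_{a=0}^{n}$ is $\hat g$-unitary: the $w_i$ ($1\le i\le n$) are $\omega$-normal at $p_0$, while $\hat g_{0\bar 0}=1$ and $\hat g_{i\bar 0}=0$ identically. Second, $\pi^{*}\alpha$ has no component along $dt$ or $d\bar t$, so, since the complex Hessian of the function $\vp$ is $\de_a\de_{\bar b}\vp$ in any holomorphic chart, at $x_0$ one has
\[
(\ti\alpha_\vp)_{0\bar 0}=W_0W_{\bar 0}(\vp),\qquad (\ti\alpha_\vp)_{i\bar 0}=W_iW_{\bar 0}(\vp),\qquad (\ti\alpha_\vp)_{i\bar j}=W_iW_{\bar j}(\vp)+\alpha_{i\bar j}\ \ (1\le i,j\le n).
\]
Bound (1) is then immediate: for $1\le i,j\le n$, $W_iW_{\bar j}(\vp)$ is a component, in a $g$-unitary frame, of the spatial complex Hessian $\nabla^X\ov{\nabla^X}\vp$, which is controlled by~\eqref{Spatial estimate}.

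For (2), the quantity $W_0W_{\bar 0}(\vp)=(\ti\alpha_\vp)_{0\bar 0}$ is the Rayleigh quotient of $\ti\alpha_\vp$ in the direction $\de_t$, so $\mu_n\le W_0W_{\bar 0}(\vp)\le\mu_0$; by Lemma~\ref{Properties of Lagrangian}(2) we have $\mu_n\ge-\cot\eta$ and $\mu_0\ge\mu_{n-1}\ge\tan(\tfrac\eta2)>0$, whence $|W_0W_{\bar 0}(\vp)|\le\max(\mu_0,\cot\eta)\le C\mu_0$. For the pure-time derivative I use the $S^1$-invariance $\vp(x,t)=\vp(x,|t|)$, equivalently $\mathcal R\vp=0$ with $\mathcal R=\sqrt{-1}(t\de_t-\bar t\de_{\bar t})$; differentiating in $t$ and evaluating at $x_0$ gives $t_0\,W_0W_0(\vp)=\bar t_0\,W_0W_{\bar 0}(\vp)-W_0(\vp)$, so that $|W_0W_0(\vp)|\le|W_0W_{\bar 0}(\vp)|+|W_0(\vp)|/|t_0|$, which is $\le C\mu_0$ once the first-order term is absorbed (see below).

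For (3), $W_iW_{\bar 0}(\vp)=(\ti\alpha_\vp)_{i\bar 0}$ is an off-diagonal entry of the Hermitian matrix $\ti\alpha_\vp$, whose eigenvalues in the $\hat g$-unitary $w$-frame are $\mu_0\ge\cdots\ge\mu_n$. Positivity of $\ti\alpha_\vp-\mu_n\hat g$ on the principal $2\times 2$ minor in the indices $\{i,0\}$ gives
\[
|W_iW_{\bar 0}(\vp)|^{2}\le\bigl((\ti\alpha_\vp)_{i\bar i}-\mu_n\bigr)\bigl((\ti\alpha_\vp)_{0\bar 0}-\mu_n\bigr),
\]
where by (1) and $|\alpha|\le C$ the first factor is bounded independently of $\mu_0$, while the second is $\le\mu_0+\cot\eta\le C\mu_0$; hence $|W_iW_{\bar 0}(\vp)|\le C\sqrt{\mu_0}$. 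Differentiating $\mathcal R\vp=0$ along the spatial field $W_i$ and evaluating at $x_0$ gives the clean identity $t_0\,W_iW_0(\vp)=\bar t_0\,W_iW_{\bar 0}(\vp)$, with no first-order term, so $|W_iW_0(\vp)|=|W_iW_{\bar 0}(\vp)|\le C\sqrt{\mu_0}$.

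The one step requiring genuine care is the first-order term $|W_0(\vp)|/|t_0|$ in (2). A priori $|W_0(\vp)|=|\nabla_t\vp|$ is only $O(\epsilon^{-1})$, so to conclude $|W_0(\vp)|/|t_0|\le C\mu_0$ one must use that at the maximum points relevant for Theorem~\ref{C11 estimate} the largest eigenvalue $\mu_0$ is of order $\epsilon^{-2}$ (equivalently, comparable to $\vp_{VV}$, which we may assume large --- otherwise the desired estimate already holds); this is where the full strength of the estimates of~\cite{CY18} enters. Everything else is elementary Hermitian linear algebra together with the $S^1$-invariance identities.
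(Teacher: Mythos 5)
Your proofs of (1) and (3) run essentially the same route as the paper's.  Part (1) is just the spatial estimate~\eqref{Spatial estimate}.  For (3) you use positivity of a $2\times 2$ principal minor of the shifted Hermitian form (you shift $\ti\alpha_\vp$ by $\mu_n\hat g$, the paper shifts $\vp_{a\bar b}$ by $C\delta_{ab}$ using $\ti\alpha_\vp\ge -C\hat\omega$; since $(\ti\alpha_\vp)_{i\bar 0}=W_iW_{\bar 0}(\vp)$ these give the same bound) together with the $S^1$-invariance identity $t_0\,W_iW_0(\vp)=\bar t_0\,W_iW_{\bar 0}(\vp)$, which is exact for $1\le i\le n$ because $W_i$ commutes with the $S^1$-generator.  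The Rayleigh-quotient observation that $\mu_n\le (\ti\alpha_\vp)_{0\bar 0}\le\mu_0$ and Lemma~\ref{Properties of Lagrangian}(2) give $|W_0W_{\bar 0}(\vp)|\le C\mu_0$, as in the paper.

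The subtlety you flag in (2) is genuine: differentiating $\sqrt{-1}(t\de_t-\bar t\de_{\bar t})\vp=0$ by $\de_t$ really does produce the zeroth-order term $-W_0(\vp)$, so $|W_0W_0(\vp)|$ and $|W_0W_{\bar 0}(\vp)|$ are \emph{not} literally equal (take $\vp=|t|^2$: the mixed derivative is $1$, the pure one vanishes), and the paper's one-line justification of (2) is imprecise on this point.  However, your proposed patch does not close the gap.  You would need $\mu_0\gtrsim\epsilon^{-2}$ at $x_0$, and nothing forces that: $x_0$ maximizes the spatial quantity $\hat Q$, and the estimates of~\cite{CY18} give only \emph{upper} bounds of order $\epsilon^{-2}$ on the time--time component of the Hessian, not lower bounds, so $\mu_0(x_0)$ could a priori be $O(1)$ even when $\vp_{VV}(x_0)$ is large.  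Invoking ``the full strength of the estimates of~\cite{CY18}'' is not an argument.  The reason this is harmless is structural, not quantitative: the pure-time bound $|W_0W_0(\vp)|\le C\mu_0$ is never used downstream.  Lemma~\ref{Estimate 2}, Lemma~\ref{Calculation} (in particular Case~1 there, where $W_0W_{\bar 0}(\nabla_VV)(\vp)$ vanishes identically by commutation), and the conclusion of Theorem~\ref{C11 estimate} only ever call on $W_0W_{\bar 0}(\vp)$, $W_iW_0(\vp)$, $W_iW_{\bar 0}(\vp)$ ($1\le i\le n$) and purely spatial second derivatives.  So the right fix is to drop the $W_0W_0(\vp)$ assertion from part (2) rather than try to prove it.
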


\begin{proof}
(1) follows from the uniform estimate (\ref{C11 estimate}) for the spatial second order derivatives. For (2), recalling $\mu_{0}$ is the largest eigenvalue and combining this with Lemma \ref{Properties of Lagrangian} (2), we obtain
\[
\max_{0\leq i\leq n}|\mu_{i}| \leq C\mu_{0},
\]
which implies
\[
|W_{0}W_{\ov{0}}(\vp)| \leq C\mu_{0}.
\]
Since $\vp$ is $S^{1}$-invariant, so we have
\[
|W_{0}W_{0}(\vp)| = |W_{0}W_{\ov{0}}(\vp)| \leq C\mu_{0}.
\]

For (3), by Lemma \ref{Properties of Lagrangian} (2), we obtain $\ti{\alpha}\geq-C\hat{\omega}$. Then there exists a uniform constant $C$ such that $\vp_{i\ov{j}}+C\delta_{ij}$ is positive definite. Combining this with (1) and (2), we see that
\[
|W_{i}W_{\ov{0}}(\vp)|^{2} \leq  (W_{0}W_{\ov{0}}(\vp)+C)(W_{i}W_{\ov{i}}(\vp)+C) \leq C\mu_{0}.
\]
Since $\vp$ is $S^{1}$-invariant, we obtain
\[
|W_{i}W_{\ov{0}}(\vp)|=|W_{i}W_{0}(\vp)| \leq C\sqrt{\mu_{0}}.
\]
\end{proof}

\begin{lemma}\label{Estimate 2}
At $x_{0}$, there exists a uniform constant $C$ such that
\begin{enumerate}
\item $|\mu_{i}| \leq C$ for $1\leq i\leq n$.

\vspace{3pt}

\item $|\rho_{0i}|+|\rho_{i0}|\leq \frac{C}{\sqrt{\mu_{0}}}$ for $1\leq i\leq n$.
\item $|\rho^{0i}|+|\rho^{i0}|\leq \frac{C}{\sqrt{\mu_{0}}}$ for $1\leq i\leq n$.
\end{enumerate}
\end{lemma}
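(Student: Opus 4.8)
The statement to prove concerns the behavior at the maximum point $x_0$ of the auxiliary quantity $\hat Q$, under the standing assumption that $\mu_0$ is large (otherwise everything is trivially bounded). The plan is to combine the $S^1$-invariance of $\vp$ with the uniform a priori estimates on the spatial derivatives together with the change-of-basis relations recorded in the set-up.

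First I would prove (1). Since the $w_i$ ($1\le i\le n$) are spatial and the $z_i$ are obtained from the $w_i$ by a \emph{constant unitary} change of coordinates, the eigenvalue $\mu_i$ of $\ti\alpha$ with respect to $\hat\omega$ can be estimated by testing $\ti\alpha$ against spatial vectors; more precisely, using the min-max characterization applied to the subspace spanned by $\{z_1,\dots,z_n\}$ minus the one-dimensional piece along the large direction $z_0$, each of $\mu_1,\dots,\mu_n$ is pinched between two of the quantities $\ti\alpha(W_a,\bar W_b)$ for $1\le a,b\le n$, all of which are bounded by $\sup_{\mathcal X_\epsilon}|\nabla^X\ov{\nabla^X}\vp|+C$ using $\ti\alpha=\pi^*\alpha+\mn D\ov D\vp$. (Equivalently: by Lemma~\ref{Properties of Lagrangian}(1),(2) the $\mu_i$ for $1\le i\le n-1$ are positive and $\mu_{n-1}+\mu_n\ge0$, so boundedness of $\mu_1$ gives boundedness of all $\mu_i$, $1\le i\le n$, and $\mu_1$ is controlled by the spatial Hessian.) This is where Lemma~\ref{Properties of Lagrangian} and the estimate~\eqref{Spatial estimate} (called \eqref{C11 estimate} in the text) enter.

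Next, (2). Write the eigenvector relation $\ti\alpha_{i\bar j}=\delta_{ij}\mu_i$ at $x_0$ and expand $\ti\alpha$ in the $\{W_a\}$ basis using $\de_i=\sum_a\rho_{ia}W_a$. Picking off the $(0,i)$ entry for $1\le i\le n$ gives $0=\ti\alpha_{0\bar i}=\sum_{a,b}\rho_{0a}\ov{\rho_{ib}}\,\ti\alpha(W_a,\bar W_b)$; isolating the $a=b=0$ term yields
\[
\rho_{00}\ov{\rho_{i0}}\,\ti\alpha(W_0,\bar W_0)=-\sum_{(a,b)\ne(0,0)}\rho_{0a}\ov{\rho_{ib}}\,\ti\alpha(W_a,\bar W_b).
\]
On the right, every term has at least one spatial index, so by Lemma~\ref{Estimate 1}(1),(3) it is $O(\sqrt{\mu_0})$; meanwhile $|\ti\alpha(W_0,\bar W_0)|$ is comparable to $\mu_0$ from below (this uses that $\mu_0$ is the dominant eigenvalue together with Lemma~\ref{Estimate 1}(2) and that the unitary matrix $(\rho_{ij})$ has $|\rho_{00}|$ bounded below — which itself follows because, $\mu_0$ being large and the other $\mu_i$ bounded, the $\mu_0$-eigendirection $z_0$ must be nearly aligned with the time direction $W_0$), so $|\rho_{00}|$ is bounded below and $|\ti\alpha(W_0,\bar W_0)| \geq c\mu_0$. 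Dividing gives $|\rho_{i0}|\le C\sqrt{\mu_0}/\mu_0=C/\sqrt{\mu_0}$. The bound for $|\rho_{0i}|$ is obtained identically from the $(i,0)$ entry, or directly from unitarity of $(\rho_{ij})$. Finally (3) follows from (2) because $(\rho^{ij})$ is the inverse of the unitary matrix $(\rho_{ij})$, hence $\rho^{ij}=\ov{\rho_{ji}}$, so $|\rho^{0i}|=|\rho_{i0}|$ and $|\rho^{i0}|=|\rho_{0i}|$.

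The main obstacle is the step establishing the lower bound $|\ti\alpha(W_0,\bar W_0)|\ge c\mu_0$ (equivalently $|\rho_{00}|\ge c>0$): one has to argue that, since $\mu_0\to\infty$ while $\mu_1,\dots,\mu_n$ stay bounded by part (1), the large eigenvalue cannot "hide" in the spatial directions — all the spatial Hessian components are bounded by Lemma~\ref{Estimate 1}(1), so the $z_0$ direction must have a nontrivial $W_0$-component, quantitatively of size bounded below. This is really a perturbation/linear-algebra argument: the Hermitian matrix $\ti\alpha_{a\bar b}$ in the $W$-basis has bounded spatial block and one large entry ($\ti\alpha(W_0,\bar W_0)\le C\mu_0$ by Lemma~\ref{Estimate 1}(2)) and bounded off-diagonal spatial-time entries ($O(\sqrt{\mu_0})$ by Lemma~\ref{Estimate 1}(3)), and its top eigenvalue is $\mu_0$; a direct Rayleigh-quotient estimate with the test vector $W_0$ then forces $\ti\alpha(W_0,\bar W_0)\ge\mu_0-C$, and also forces the unit $\mu_0$-eigenvector to have $|\rho_{00}|^2\ge1-C/\mu_0$. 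Everything else is bookkeeping with the constant unitary change of basis and the $S^1$-invariance identities $W_0W_0\vp=W_0W_{\bar0}\vp$, $W_iW_0\vp=W_iW_{\bar0}\vp$.
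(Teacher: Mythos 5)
Your proposal follows the same basic strategy as the paper: bound the entries of $\ti\alpha$ in the $W$-basis via Lemma~\ref{Estimate 1}, and propagate these bounds to the unitary change-of-basis matrix $(\rho_{ij})$. Part~(1) works as you describe: Courant--Fischer with the spatial test subspace $\mathrm{span}(W_1,\dots,W_n)$ bounds $\mu_1$ from above, and the lower bounds come from Lemma~\ref{Properties of Lagrangian}(2). The paper achieves the same thing via the trace identity $\sum_{i}\mu_i=\sum_i\ti\alpha_{W_iW_{\ov i}}$ together with $\ti\alpha_{W_0W_{\ov 0}}\le\mu_0$; both are equivalent.

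There is, however, a misattribution in Part~(2) that you should fix. You claim that a ``Rayleigh-quotient estimate with the test vector $W_0$'' forces $\ti\alpha(W_0,\bar W_0)\ge\mu_0-C$. Plugging $W_0$ into the quadratic form gives only the trivial \emph{upper} bound $\ti\alpha(W_0,\bar W_0)\le\mu_0$; it does not give a lower bound. The lower bound follows instead from trace invariance: using~\eqref{Estimate 2 eqn 2} and part~(1),
\[
\ti\alpha(W_0,\bar W_0)=\mu_0+\sum_{i\ge1}\mu_i-\sum_{i\ge1}\ti\alpha(W_i,\bar W_i)\ge\mu_0-C.
\]
Equivalently, one can expand $\mu_0=\ti\alpha_{0\bar0}=\sum_{a,b}\rho_{0a}\ov{\rho_{0b}}\,\ti\alpha(W_a,\bar W_b)$ in the $W$-basis, as the paper does, insert the bounds $|\ti\alpha(W_i,\bar W_j)|\le C$, $|\ti\alpha(W_i,\bar W_0)|\le C\sqrt{\mu_0}$, $\ti\alpha(W_0,\bar W_0)\le\mu_0+C$ (the last again by trace), and arrive at the quadratic inequality $\mu_0\sum_{i\ge1}|\rho_{0i}|^2\le C\sqrt{\mu_0}\sum_{i\ge1}|\rho_{0i}|+C$, which directly gives $\sum_{i\ge1}|\rho_{0i}|^2\le C/\mu_0$. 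Once you have $|\rho_{00}|^2\ge1-C/\mu_0$, your subsequent step of expanding the off-diagonal entry $\ti\alpha_{0\bar i}=0$ is actually redundant: unitarity already yields $|\rho_{0i}|\le\sqrt{1-|\rho_{00}|^2}\le C/\sqrt{\mu_0}$, and then $\rho^{ij}=\ov{\rho_{ji}}$ gives~(3). So the idea and the conclusion are right, but the mechanism for the lower bound on $\ti\alpha(W_0,\bar W_0)$ should be trace invariance (or the eigenvector expansion of $\mu_0$), not a Rayleigh quotient in $W_0$.
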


\begin{proof}
At $x_{0}$, since the first derivative of $\hat{g}$ is zero, we have
\[
|\ti{\alpha}_{W_{i}W_{\ov{j}}}| \leq |W_{i}W_{\ov{j}}(\vp)|+C, \quad \text{for $0\leq i,j \leq n$}.
\]
Combining this with Lemma \ref{Estimate 1},
\begin{equation}\label{Estimate 2 eqn 1}
\begin{split}
|\ti{\alpha}_{W_{i}W_{\ov{j}}}| \leq C \quad \text{for $1\leq i,j\leq n$},\\
|\ti{\alpha}_{W_{i}W_{\ov{0}}}|=|\ti{\alpha}_{W_{0}W_{\ov{i}}}| \leq C\sqrt{\mu_{0}}, \quad  \text{for $1\leq i\leq n$}.
\end{split}
\end{equation}

We first prove $(1)$. Since the trace of matrix is invariant under change of basis,
\begin{equation}\label{Estimate 2 eqn 2}
\mu_0 +\sum_{i=1}^n \mu_i = \ti{\alpha}_{W_{0}W_{\ov{0}}}+\sum_{i=1}^n \tilde \alpha_{W_i W_{\ov{j}}}.
\end{equation}
Using the fact that $\mu_0$ is the largest eigenvalue, we obtain
\[
\ti{\alpha}_{W_{0}W_{\ov{0}}} \leq \mu_{0}.
\]
Combining this with (\ref{Estimate 2 eqn 1}) and (\ref{Estimate 2 eqn 2}),
\begin{equation}\label{Estimate 2 eqn 3}
\sum_{i=1}^n \mu_i = \left(\ti{\alpha}_{W_{0}W_{\ov{0}}}-\mu_0\right)+\sum_{i=1}^n \tilde \alpha_{W_i W_{\ov{j}}}
\leq \sum_{i=1}^n \tilde \alpha_{W_i W_{\ov{j}}}
\leq C.
\end{equation}
Then (1) follows from (\ref{Estimate 2 eqn 3}) and Lemma \ref{Properties of Lagrangian} (2).

Since the matrix $(\rho_{ij})$ is unitary, (2) and (3) are equivalent. It suffices to prove (2). Without loss of generality, we assume that $\mu_{0}\geq1$. Using  (\ref{Estimate 2 eqn 1}) and  (\ref{Estimate 2 eqn 2}), we compute
\[
\begin{split}
\mu_0 = {} &  \tilde \alpha_{0\ov 0}
= \sum_{i,j=0}^{n}\rho_{0i}\ov \rho_{0j}\tilde \alpha_{W_i W_{\ov j}} \\
= {} & |\rho_{00}|^{2}\tilde \alpha_{W_0 W_{\ov 0}}+\sum_{i=1}^{n}\rho_{0i}\ov {\rho_{00}}\tilde \alpha_{W_i W_{\ov 0}}
+\sum_{j=1}^{n}\rho_{00}\ov{\rho_{0j}}\tilde \alpha_{W_i W_{\ov 0}} +\sum_{i,j=1}^{n}\rho_{0i}\ov \rho_{0j}\tilde \alpha_{W_i W_{\ov j}} \\
\leq {} & |\rho_{00}|^{2}\tilde \alpha_{W_0 W_{\ov 0}}+C\sqrt{\mu_{0}}\sum_{i=1}^{n}|\rho_{0i}|+C \\
= {} & |\rho_{00}|^{2}\left(\mu_{0}+\sum_{i=1}^{n}\mu_{i}-\sum_{i=1}^n \tilde \alpha_{W_i W_{\ov{j}}}\right)+C\sqrt{\mu_{0}}\sum_{i=1}^{n}|\rho_{0i}|+C \\
\leq {} & |\rho_{00}|^{2}\mu_{0}+C\sqrt{\mu_{0}}\sum_{i=1}^{n}|\rho_{0i}|+C.
\end{split}
\]
Applying $|\rho_{00}|^{2}=1-\sum_{i=1}^{n}|\rho_{0i}|^{2}$, we get
\[
\mu_{0} \leq \left(1-\sum_{i=1}^{n}|\rho_{0i}|^{2}\right)\mu_{0}+C\sqrt{\mu_{0}}\sum_{i=1}^{n}|\rho_{0i}|+C,
\]
It then follows that
\[
\mu_{0}\sum_{i=1}^{n}|\rho_{0i}|^{2} \leq C\sqrt{\mu_{0}}\sum_{i=1}^{n}|\rho_{0i}|+C,
\]
which implies
\begin{equation}\label{Estimate 2 eqn 4}
\sum_{i=1}^{n}|\rho_{0i}|^2 \leq \frac{C}{\mu_{0}}.
\end{equation}

Then (2) follows.
\end{proof}

\begin{lemma}\label{Calculation}
At $x_{0}$, we have
\begin{equation}\label{Calculation 1}
L(|\nabla^{X}\vp|^{2}) \geq \sum_{i=0}^{n}\sum_{k=1}^{n}F^{i\ov{i}}\left(|\de_{i}W_{k}(\vp)|^{2}+|\de_{i}W_{\ov{k}}(\vp)|^{2}\right)-C
\end{equation}
and
\begin{equation}\label{Calculation 2}
L\left(g_{VV}^{-1}\vp_{VV}\right)
\geq \sum_{i,j=0}^{n}\frac{\mu_{i}+\mu_{j}}{(1+\mu_{i}^{2})(1+\mu_{j}^{2})}|V(\ti{\alpha}_{i\ov{j}})|^{2}-C\vp_{VV}.
\end{equation}
\end{lemma}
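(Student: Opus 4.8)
The plan is to differentiate the specified Lagrangian phase equation $F(\ti\alpha_\vp) = h$ once and twice in appropriate directions and to carefully control the error terms using the a priori estimates, the structure of $F^{i\ov j, k\ov l}$ recorded above, and the eigenvalue bounds from Lemma~\ref{Properties of Lagrangian} and Lemma~\ref{Estimate 2}. Throughout I work in the coordinates $\{z_i\}_{i=0}^n$ normalized at $x_0$ so that $\hat g_{i\ov j} = \delta_{ij}$, $\de_k \hat g_{i\ov j} = 0$ and $\ti\alpha_{i\ov j} = \mu_i\delta_{ij}$, so that $F^{i\ov j}$ is diagonal with $F^{i\ov i} = (1+\mu_i^2)^{-1}$, and $L := F^{i\ov j}\de_i\de_{\ov j}$ is the linearized operator.

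For~\eqref{Calculation 1}: write $|\nabla^X\vp|^2 = \sum_{k=1}^n g^{W_kW_{\ov k}}(W_k(\vp)W_{\ov k}(\vp) + W_{\ov k}(\vp)W_k(\vp))$ (being careful that only spatial derivatives enter, but that $W_k$ is expressed in the $\de_i$ basis via the unitary matrix $\rho^{ki}$). Applying $L$ produces the good second-order term $\sum_{i,k}F^{i\ov i}(|\de_i W_k(\vp)|^2 + |\de_i W_{\ov k}(\vp)|^2)$ together with: (a) terms where $L$ hits $W_k(\vp)$ directly, which after commuting $\de_i\de_{\ov i}$ past $W_k$ and using the differentiated equation $F^{i\ov j}\de_i\de_{\ov j}W_k(\vp) = W_k(h) + (\text{curvature of }\hat g)\ast\nabla\vp$ are bounded by $C$ using~\eqref{Spatial estimate}; and (b) terms where derivatives land on the metric coefficients $g^{W_kW_{\ov k}}$ or the commutator $[\de_i, W_k]$, which are controlled by the first-order estimate $|\nabla^X\vp|\le C$. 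Here I must use Lemma~\ref{Estimate 2} to handle the contributions where a $\de_0$ (time) derivative pairs against $F^{0\ov 0}$ — but $F^{0\ov 0} = (1+\mu_0^2)^{-1}$ is small, and the potentially dangerous $|\de_0 W_k(\vp)|$-type quantities enter with this small coefficient, so they drop into the $-C$ error. Collecting everything gives~\eqref{Calculation 1}.

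For~\eqref{Calculation 2}: set $u := g_{VV}^{-1}\vp_{VV}$, noting $g_{VV}$ is a fixed function on $X$ independent of $\vp$ and smooth, so $L(u) = g_{VV}^{-1}L(\vp_{VV}) + (\text{first order in }\vp_{VV})\cdot(\text{bounded})$, and $\vp_{VV}$ is already known bounded below/above in magnitude up to the scale we track. Differentiating $F(\ti\alpha_\vp)=h$ twice in the real direction $V$ (which, since $V$ is a real combination of $W_j, W_{\ov j}$ with constant coefficients, commutes with $\de_i,\de_{\ov j}$ up to $\hat g$-curvature terms that are bounded) gives the standard concavity identity
\[
F^{i\ov j}\de_i\de_{\ov j}(V V(\vp)) = -F^{i\ov j,k\ov l}V(\ti\alpha_{i\ov j})V(\ti\alpha_{k\ov l}) + VV(h) + (\text{bounded, first order}).
\]
Using the explicit second variation $F^{i\ov j,k\ov l} = F^{i\ov j,j\ov i}\delta_{il}\delta_{kj}$ with $F^{i\ov j,j\ov i} = -(\mu_i+\mu_j)(1+\mu_i^2)^{-1}(1+\mu_j^2)^{-1}$, the leading term becomes exactly $+\sum_{i,j}\frac{\mu_i+\mu_j}{(1+\mu_i^2)(1+\mu_j^2)}|V(\ti\alpha_{i\ov j})|^2$, and the remaining terms (from $VV(h)$, from $\hat g$-curvature, from the passage between $VV(\vp)$ and $\ti\alpha_{VV}$, and from the $g_{VV}^{-1}$ normalization) are all bounded by $C\vp_{VV} + C \le C\vp_{VV}$ (absorbing constants since $\vp_{VV}\ge 1$ may be assumed at the max point). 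This yields~\eqref{Calculation 2}.

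The main obstacle is bookkeeping in~\eqref{Calculation 1}: the mismatch between the $W$-frame (in which $\nabla^X$ and the a priori estimates are stated) and the $z$-frame (in which $F^{i\ov j}$ is diagonal) forces one to carry the unitary change-of-frame matrices $\rho_{ij}, \rho^{ij}$ through every term, and one must verify that every place a "bad" mixed time-space derivative or a large factor $\mu_0$ could appear, it is either multiplied by the small weight $F^{0\ov 0}=(1+\mu_0^2)^{-1}$ or by an off-diagonal frame coefficient $\rho_{0i}, \rho^{0i} = O(\mu_0^{-1/2})$ from Lemma~\ref{Estimate 2}, so that the product stays bounded. Once that weighting is checked term by term, both inequalities follow from routine differentiation of the equation.
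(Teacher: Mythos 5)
Your overall strategy (differentiate the Lagrangian phase equation once and twice, use the explicit second variation of $F$ to produce the concavity term, and control errors via the a priori estimates and the frame matrices $\rho$) is the same as the paper's. However, you have misplaced where the genuine difficulty lies, and as a result the hardest step in the lemma is asserted rather than argued.

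For~\eqref{Calculation 1} you worry at length about a mismatch between the $W$-frame and the $z$-frame and about a need to invoke Lemma~\ref{Estimate 2} and the smallness of $F^{0\ov 0}$ to control dangerous $\de_0 W_k(\vp)$-type quantities. In fact none of this is needed: the $W_k$ and $\de_i$ are constant unitary combinations of one another, so they commute, and $L(|\nabla^X\vp|^2)$ directly decomposes into the quoted positive term, plus a second-order-in-metric term bounded using $F^{i\ov i}\le 1$ and $|\nabla^X\vp|\le C$, plus a term $2\,\mathrm{Re}\sum_{i,k}F^{i\ov i}\de_i\de_{\ov i}W_k(\vp)\,W_{\ov k}(\vp)$ handled by the once-differentiated equation $\sum_i F^{i\ov i}\de_i\de_{\ov i}W_k(\vp) = W_k(h)-\sum_i F^{i\ov i}W_k((\pi^*\alpha)_{i\ov i})\ge -C$. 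No use of $\rho_{0i}\sim\mu_0^{-1/2}$ is required for~\eqref{Calculation 1}.

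The real obstacle is in~\eqref{Calculation 2}, and you dispose of it in a parenthetical as a ``bounded, first order'' term. Because $\vp_{VV}=VV(\vp)+(\nabla_V V)(\vp)$, applying $L$ to $g_{VV}^{-1}\vp_{VV}$ produces, besides the twice-differentiated concavity identity you correctly write down, the term $\sum_i F^{i\ov i}\de_i\de_{\ov i}(\nabla_V V)(\vp)$. Bounding this by $C\vp_{VV}$ is \emph{not} routine. Expanding $\de_i\de_{\ov i}$ in the $W$-frame via $\rho$, the expression contains quantities like $\rho_{i0}\ov{\rho_{il}}\,W_0 W_{\ov l}(\nabla_V V)(\vp)$, and the mixed second derivatives $W_0 W_{\ov l}(\cdots)$ are only bounded by $C\sqrt{\mu_0}$, not by $C$. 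The argument requires a four-way case analysis depending on whether the frame indices are $0$ or not, using: (i) $(\nabla_V V)(x_0)=0$ and $[W_0,\nabla_V V]=0$ to kill the $k=l=0$ case; (ii) Lemma~\ref{Estimate 1}\,(2),(3) (which bound $|W_0W_{\ov 0}\vp|\le C\mu_0$ and $|W_iW_{\ov 0}\vp|\le C\sqrt{\mu_0}$) together with Lemma~\ref{Estimate 2}\,(2),(3) (which give $|\rho_{0i}|,|\rho^{0i}|\le C/\sqrt{\mu_0}$) to cancel the $\sqrt{\mu_0}$ growth in the mixed cases; and (iii) the purely spatial estimate~\eqref{Spatial estimate} for the $k,l\ne 0$ case. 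This cancellation between $\sqrt{\mu_0}$ and $\mu_0^{-1/2}$ is exactly the mechanism you mention, but you credit it to~\eqref{Calculation 1} where it does not appear, and in~\eqref{Calculation 2} where it is essential you state the conclusion without carrying out (or even flagging) the required case analysis. As written, the proposal does not establish the bound $\sum_i F^{i\ov i}\de_i\de_{\ov i}(\nabla_V V)(\vp)\le C\vp_{VV}$, so~\eqref{Calculation 2} is not actually proved.
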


\begin{proof}
First, we have
\[
|\nabla^{X}\vp|^{2} = \sum_{k,l=0}^{n}g^{W_{k}W_{\ov{l}}}W_{k}(\vp)W_{\ov{l}}(\vp).
\]
For (\ref{Calculation 1}), we compute
\begin{equation}\label{Calculation 1 eqn 1}
\begin{split}
L(|\nabla^{X}\vp|^{2})
= {} & \sum_{i=0}^{n}\sum_{k=1}^{n}F^{i\ov{i}}\left(|\de_{i}W_{k}(\vp)|^{2}+|\de_{i}W_{\ov{k}}(\vp)|^{2}\right) \\
& +\sum_{i=0}^{n}F^{i\ov{i}}\de_{i}\de_{\ov{i}}(g^{W_{k}W_{\ov{l}}})W_{k}(\vp)W_{\ov{l}}(\vp) \\
& +2{\rm Re}\left(\sum_{i=0}^{n}F^{i\ov{i}}\de_{i}\de_{\ov{i}}W_{k}(\vp)W_{\ov{k}}(\vp)\right).
\end{split}
\end{equation}
To deal with the last term, we apply $W_{k}$ to the equation (\ref{SLPE}), and obtain
\[
\sum_{i=0}^{n}F^{i\ov{i}}W_{k}(\ti{\alpha}_{i\ov{i}}) = W_{k}(h),
\]
which implies
\[
\begin{split}
\sum_{i=0}^{n}F^{i\ov{i}}\de_{i}\de_{\ov{i}}W_{k}(\vp)
=  W_{k}(h)-\sum_{i=0}^{n}F^{i\ov{i}}W_{k}((\pi^{*}\alpha)_{i\ov{i}}) \geq -C,
\end{split}
\]
where we used $F^{i\ov{i}}\leq 1$ in the last inequality. Substituting this into (\ref{Calculation 1 eqn 1}) and the uniform estimate (\ref{Spatial estimate}) for the spatial second order derivatives, we have
\[
\begin{split}
L(|\nabla^{X}\vp|^{2})
= {} & \sum_{i=0}^{n}\sum_{k=1}^{n}F^{i\ov{i}}\left(|\de_{i}W_{k}(\vp)|^{2}+|\de_{i}W_{\ov{k}}(\vp)|^{2}\right)
-C|\nabla^{X}\vp|^{2}-C|\nabla^{X}\vp| \\
\geq {} & \sum_{i=0}^{n}\sum_{k=1}^{n}F^{i\ov{i}}\left(|\de_{i}W_{k}(\vp)|^{2}+|\de_{i}W_{\ov{k}}(\vp)|^{2}\right)-C,
\end{split}
\]
as required.

For (\ref{Calculation 2}), we compute
\begin{equation}\label{Calculation 2 eqn 1}
\begin{split}
& L(g_{VV}^{-1}\vp_{VV}) \\[1mm]
= {} & \sum_{i=0}^{n}F^{i\ov{i}}\de_{i}\de_{\ov{i}}(\vp_{VV})+\sum_{i=0}^{n}F^{i\ov{i}}\de_{i}\de_{\ov{i}}(g_{VV}^{-1})\vp_{VV} \\
\geq {} & \sum_{i=0}^{n}F^{i\ov{i}}\de_{i}\de_{\ov{i}}VV(\vp)+\sum_{i=0}^{n}F^{i\ov{i}}\de_{i}\de_{\ov{i}}(\nabla_{V}V)(\vp)
-C\sum_{i=0}^{n}F^{i\ov{i}} \\
\geq {} & \sum_{i=0}^{n}F^{i\ov{i}}VV\de_{i}\de_{\ov{i}}(\vp)+\sum_{i=0}^{n}F^{i\ov{i}}\de_{i}\de_{\ov{i}}(\nabla_{V}V)(\vp)-C,
\end{split}
\end{equation}
where we used $F^{i\ov{i}}\leq 1$ in the last inequality.

For the first term on the right hand of (\ref{Calculation 2 eqn 1}), we apply $VV$ to the equation (\ref{SLPE}) and obtain
\[
\sum_{i=0}^{n}F^{i\ov{i}}VV(\ti{\alpha}_{i\ov{i}})
-\sum_{i,j=0}^{n}\frac{\mu_{i}+\mu_{j}}{(1+\mu_{i}^{2})(1+\mu_{j}^{2})}|V(\ti{\alpha}_{i\ov{j}})|^{2}
= VV(h),
\]
which implies
\begin{equation}\label{Calculation 2 eqn 2}
\begin{split}
& \sum_{i=0}^{n}F^{i\ov{i}}VV\de_{i}\de_{\ov{i}}(\vp) \\
= {} & \sum_{i,j=0}^{n}\frac{\mu_{i}+\mu_{j}}{(1+\mu_{i}^{2})(1+\mu_{j}^{2})}|V(\ti{\alpha}_{i\ov{j}})|^{2}
-\sum_{i=0}^{n}F^{i\ov{i}}VV((\pi^{*}\alpha)_{i\ov{i}})+VV(h) \\
\geq {} & \sum_{i,j=0}^{n}\frac{\mu_{i}+\mu_{j}}{(1+\mu_{i}^{2})(1+\mu_{j}^{2})}|V(\ti{\alpha}_{i\ov{j}})|^{2}-C.
\end{split}
\end{equation}

For the second term on the right hand of (\ref{Calculation 2 eqn 1}), we have
\begin{equation}\label{Calculation 2 eqn 3}
\begin{split}
\left|\sum_{i=0}^{n}F^{i\ov{i}}\de_{i}\de_{\ov{i}}(\nabla_{V}V)(\vp)\right|
\leq {} & \sum_{i=0}^{n}\left|\de_{i}\de_{\ov{i}}(\nabla_{V}V)(\vp)\right| \\
= {} & \sum_{i,k,l=0}^{n}\left|\rho_{ik}\ov{\rho_{il}}W_{k}W_{\ov{l}}(\nabla_{V}V)(\vp)\right|.
\end{split}
\end{equation}
We claim
\begin{equation}\label{Calculation 2 claim}
\sum_{i,k,l=0}^{n}\left|\rho_{ik}\ov{\rho_{il}}W_{k}W_{\ov{l}}(\nabla_{V}V)(\vp)\right| \leq C\vp_{VV}.
\end{equation}
Substituting (\ref{Calculation 2 eqn 2}), (\ref{Calculation 2 eqn 3}) and (\ref{Calculation 2 claim}) into (\ref{Calculation 2 eqn 1}), we obtain (\ref{Calculation 2}).

Now we prove the claim (\ref{Calculation 2 claim}). It suffices to prove each term can be controlled by $\vp_{VV}$. There are four cases:

\bigskip
\noindent
{\bf Case 1:} \ $k=l=0$.
\bigskip

In this case, since $W_{0}=\de_{t}$ is time vector field and $\nabla_{V}V$ is a spatial vector field, then we have
\[
[W_{0},\nabla_{V}V] = 0, \ [W_{\ov{0}},\nabla_{V}V] = 0.
\]
Combining this with $(\nabla_{V}V)(x_{0})=0$, we obtain
\[
W_{0}W_{\ov{0}}(\nabla_{V}V)(\vp) = (\nabla_{V}V)W_{0}W_{\ov{0}}(\vp) = 0,
\]
which implies
\[
\left|\rho_{i0}\ov{\rho_{i0}}W_{0}W_{\ov{0}}(\nabla_{V}V)(\vp)\right| = 0.
\]

\bigskip
\noindent
{\bf Case 2:} \ $k=0, l\neq0$.
\bigskip

In this case, we compute
\[
W_{0}W_{\ov{l}}(\nabla_{V}V)(\vp)
= W_{\ov{l}}(\nabla_{V}V)W_0(\vp)
= [W_{\ov{l}},\nabla_{V}V]W_{0}(\vp),
\]
where we used that $\nabla_{V}V(x_0)=0$.  Since $W_{\ov{l}}$ and $\nabla_{V}V$ are spatial vector field, the Lie bracket $[W_{\ov{l}},\nabla_{V}V]$ is still a spatial vector field. We assume
\[
[W_{\ov{l}},\nabla_{V}V] = \sum_{p=1}^{n}\left(s_{\ov{l}}^{p}W_{p}+s_{\ov{l}}^{\ov{p}}W_{\ov{p}}\right).
\]
By Lemma \ref{Estimate 2}, we obtain
\[
\begin{split}
\left|W_{0}W_{\ov{l}}(\nabla_{V}V)(\vp)\right|
\leq {} & \sum_{p=1}^{n}\left(|s_{\ov{l}}^{p}W_{p}W_{0}(\vp)|+|s_{\ov{l}}^{\ov{p}}W_{\ov{p}}W_{0}(\vp)|\right) \\
\leq {} & C\sum_{p=1}^{n}\left(|W_{p}W_{0}(\vp)|+|W_{\ov{p}}W_{0}(\vp)|\right) \\[2mm]
\leq {} & C\sqrt{\mu_{0}}.
\end{split}
\]
Combining this with Lemma \ref{Estimate 1}, we have
\[
\left|\rho_{i0}\ov{\rho_{il}}W_{0}W_{\ov{l}}(\nabla_{V}V)(\vp)\right|
\leq |\rho_{i0}|\left|W_{0}W_{\ov{l}}(\nabla_{V}V)(\vp)\right|
\leq C.
\]

\bigskip
\noindent
{\bf Case 3:} \ $k\neq 0, l=0$.
\bigskip

This case is similar to Case 2.

\bigskip
\noindent
{\bf Case 4:} \ $k\neq 0, l\neq0$.
\bigskip

In this case, we compute
\[
\begin{split}
& W_{k}W_{\ov{l}}(\nabla_{V}V)(\vp) \\
= {} & W_{k}[W_{l},\nabla_{V}V](\vp)+W_{k}(\nabla_{V}V)W_{\ov{l}}(\vp) \\
= {} & [W_{l},\nabla_{V}V]W_{k}(\vp)+[[W_{l},\nabla_{V}V],W_{k}](\vp)+[W_{k},\nabla_{V}V]W_{\ov{l}}(\vp).
\end{split}
\]
Since $W_{k}$, $W_{\ov{l}}$, $[W_{l},\nabla_{V}V]$ and $[W_{k},\nabla_{V}V]$ are spatial vector fields, using uniform estimate (\ref{Spatial estimate}) for the spatial second order derivatives and the definition of $V$, we see that
\[
\left|W_{k}W_{\ov{l}}(\nabla_{V}V)(\vp)\right| \leq C\vp_{VV}.
\]
Thus,
\[
\left|\rho_{ik}\ov{\rho_{il}}W_{k}W_{\ov{l}}(\nabla_{V}V)(\vp)\right| \leq C\vp_{VV}.
\]
\end{proof}

\begin{lemma}\label{Third order terms}
At $x_{0}$, we have
\begin{equation}\label{Third order terms eqn 1}
\begin{split}
\sum_{i,j=0}^{n}\frac{(\mu_{i}+\mu_{j})|V(\ti{\alpha}_{i\ov{j}})|^{2}}{(1+\mu_{i}^{2})(1+\mu_{j}^{2})\vp_{VV}^{2}}
& - \sum_{i=0}^{n}\frac{F^{i\ov{i}}|\de_{i}(\vp_{VV})|^{2}}{\vp_{VV}^{2}} \\
& \geq  (f')^{2}\sum_{i=0}^{n}F^{i\ov{i}}|\de_{i}(|\nabla^{X}\vp|^{2})|^{2}-C.
\end{split}
\end{equation}
\end{lemma}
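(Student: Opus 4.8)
The plan is to use the maximality of $\hat Q$ at $x_0$ to eliminate the first-order terms, reducing the assertion to a purely third-order inequality at $x_0$, and then to prove that inequality by expanding both sides in the third-order jet of $\vp$ and comparing them via Cauchy--Schwarz, using the concavity of the Lagrangian phase operator together with the eigenvalue and frame estimates of Lemmas \ref{Properties of Lagrangian}, \ref{Estimate 1}, \ref{Estimate 2}.

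First I would record the critical point equation. Since $\hat Q$ is maximal at $x_0$, $\de_i\hat Q(x_0)=0$, and because $\{w_i\}$ are normal coordinates for $(X,\omega)$ at $p_0$ and $V$ has constant components, $\de_i g_{VV}(x_0)=0$, so
\[
\frac{\de_i(\vp_{VV})}{\vp_{VV}}=-f'\,\de_i(|\nabla^X\vp|^2)\qquad\text{at }x_0 .
\]
Contracting against $F^{i\ov i}\ge0$ and taking absolute squares yields $\sum_i F^{i\ov i}|\de_i(\vp_{VV})|^2/\vp_{VV}^2=(f')^2\sum_i F^{i\ov i}|\de_i(|\nabla^X\vp|^2)|^2$. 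Substituting this for the right-hand side of \eqref{Third order terms eqn 1}, the statement becomes equivalent to the third-order estimate
\[
\sum_{i,j=0}^n\frac{(\mu_i+\mu_j)\,|V(\ti\alpha_{i\ov j})|^2}{(1+\mu_i^2)(1+\mu_j^2)}\ \ge\ 2\sum_{i=0}^n\frac{|\de_i(\vp_{VV})|^2}{1+\mu_i^2}\ -\ C\vp_{VV}^2 .
\]

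Next I would rewrite both sides in terms of the third-order jet of $\vp$ at $x_0$. On the right, since $W_0,\dots,W_n$ commute, $V$ and $\de_i$ have constant components in that frame, and the Christoffel symbols of $(X,\omega)$ are $t$-independent and vanish at $p_0$, one gets $\de_i(\vp_{VV})=\nabla^2(\de_i\vp)(V,V)+O(1)$ at $x_0$; writing $V=\zeta+\ov\zeta$ with $\zeta=\sum_{k=1}^n v_kW_k$ and using the K\"ahler identity $\nabla_\zeta\ov\zeta=0$, this splits into a ``holomorphic'' part $\de_i(\zeta\zeta\vp)+\de_i(\ov\zeta\,\ov\zeta\vp)$, built from the third derivatives $W_mW_kW_l(\vp)$ that the equation does not see, and a ``mixed'' part $2\de_i(\zeta\ov\zeta\vp)$, each summand of which is $W_m$ applied to a component $W_aW_{\ov b}(\vp)=\ti\alpha_{W_aW_{\ov b}}+O(1)$ and hence expressible through derivatives of the $\ti\alpha$-components. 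On the left, $V(\ti\alpha_{i\ov j})$ is likewise a combination of derivatives of $\ti\alpha$-components. The mixed part of $2\sum_i F^{i\ov i}|\de_i(\vp_{VV})|^2$ is then dominated by the left-hand side after Cauchy--Schwarz, crucially using that $\mu_i+\mu_j\ge0$ for every pair of indices except possibly $i=j=n$; the exceptional term, which occurs only when $\mu_n<0$, is handled with Lemma \ref{Properties of Lagrangian}, where $|\mu_n|$ is bounded above and $\mu_{n-1}$ below away from $0$, leaving room to absorb it. All error terms produced along the way are $O(\vp_{VV})$: one uses Lemma \ref{Estimate 1}, the bound $|W_kW_l(\vp)|\le C\vp_{VV}$ (the real spatial Laplacian of $\vp$ is bounded and $\ti\alpha\ge-C\hat\omega$, so $\vp_{VV}$, being the largest spatial Hessian eigenvalue, controls all its entries), and the off-diagonal frame estimates $|\rho_{0a}|+|\rho_{a0}|\le C\mu_0^{-1/2}$ of Lemma \ref{Estimate 2}, which suppress the degenerate time index.

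The hard part will be the holomorphic third derivatives $W_mW_kW_l(\vp)$: these are not controlled by the PDE, so they cannot be matched against the good left-hand term directly. Here I would exploit the factor $2$ on the right together with the critical point identity from the first step: the identity replaces $\de_i(\vp_{VV})$ by $\vp_{VV}f'\de_i(|\nabla^X\vp|^2)$, an expression built only from first and second derivatives of $\vp$ --- no third derivatives --- so that, after a careful Cauchy--Schwarz that keeps track of which derivatives carry the time index $0$ (where the tiny factor $F^{0\ov0}=(1+\mu_0^2)^{-1}$ together with Lemma \ref{Estimate 2} does the suppressing), the entire holomorphic contribution, as well as all cross terms between the holomorphic and mixed parts, is bounded by $C\vp_{VV}^2$ and absorbed into the error. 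The two remaining delicate points --- the almost-degenerate eigenvalue $\mu_0$, which can blow up as $\epsilon\to0$, and the sign of $\mu_n$ --- are precisely where the inputs from Lemmas \ref{Estimate 2} and \ref{Properties of Lagrangian} are essential.
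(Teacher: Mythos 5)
Your first step — using $\de_i\hat Q(x_0)=0$ to convert $\sum_iF^{i\ov i}|\de_i(\vp_{VV})|^2/\vp_{VV}^2$ into $(f')^2\sum_iF^{i\ov i}|\de_i(|\nabla^X\vp|^2)|^2$ — is exactly what the paper does, and it is the right move. After that, however, your plan diverges from the paper's proof in a way that leaves a genuine gap at the crucial point.

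Once the critical-point identity is invoked, the two terms involving $|\de_i(\vp_{VV})|^2$ and $(f')^2|\de_i(|\nabla^X\vp|^2)|^2$ are the \emph{same} quantity, so the only content left to prove is that the concavity sum
\[
\sum_{i,j=0}^n\frac{(\mu_i+\mu_j)|V(\ti\alpha_{i\ov j})|^2}{(1+\mu_i^2)(1+\mu_j^2)}
\]
is bounded below by $-C\vp_{VV}^2$. Your proposal instead re-expands $\de_i(\vp_{VV})$ into holomorphic and mixed third-order jets of $\vp$ and tries to match those against the left-hand side --- but that re-introduces the very third derivatives the critical-point identity was supposed to eliminate, and none of the subsequent Cauchy--Schwarz bookkeeping addresses the real obstruction.

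The obstruction is this: after discarding the nonnegative off-diagonal pairs (Lemma \ref{Properties of Lagrangian}(1)), the only possibly negative term is $-\dfrac{2|\mu_n|\,|V(\ti\alpha_{n\ov n})|^2}{(1+\mu_n^2)^2}$ when $\mu_n<0$. You propose to ``absorb'' it using only the eigenvalue bounds $|\mu_n|\le\cot\eta$ and $\mu_{n-1}\ge\tan(\eta/2)$. That cannot work: $|V(\ti\alpha_{n\ov n})|$ is a third derivative of $\vp$ in a spatial direction and is \emph{not} a priori bounded, so with those eigenvalue bounds alone the negative term can dominate everything. The missing ingredient is to differentiate the PDE in the direction $V$ to obtain $\sum_i F^{i\ov i}V(\ti\alpha_{i\ov i})=V(h)$, solve this for $F^{n\ov n}V(\ti\alpha_{n\ov n})$, apply Cauchy--Schwarz with the weights $\mu_k$ to the remaining sum over $k\le n-1$, and then invoke the sharp algebraic bound from Lemma \ref{Properties of Lagrangian}(2)--(3),
\[
|\mu_n|\sum_{k=0}^{n-1}\frac1{\mu_k}\le\min\!\left(\frac{n|\mu_n|}{\tan\eta},\,1-|\mu_n|\tan\eta\right)\le\frac{n}{n+\tan^2\eta}<1,
\]
which is what makes the absorption close. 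This Collins--Yau trick is the heart of the lemma and is absent from your sketch; without it the argument does not go through.
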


\begin{proof}
First, at $x_{0}$, we have
\[
0 = \de_{i}\hat{Q} = \frac{\de_{i}(\vp_{VV})}{\vp_{VV}}+f'\de_{i}(|\nabla^{X}\vp|^{2}),
\]
which implies
\[
\sum_{i=0}^{n}\frac{F^{i\ov{i}}|\de_{i}(\vp_{VV})|^{2}}{\vp_{VV}^{2}}
= (f')^{2}\sum_{i=0}^{n}F^{i\ov{i}}|\de_{i}(|\nabla^{X}\vp|^{2})|^{2}.
\]

Next we use the idea of \cite{CY18} to deal with the first term on the left hand side of (\ref{Third order terms eqn 1}). If $\mu_{n}\geq0$, then
\[
\sum_{i,j=0}^{n}\frac{(\mu_{i}+\mu_{j})|V(\ti{\alpha}_{i\ov{j}})|^{2}}{(1+\mu_{i}^{2})(1+\mu_{j}^{2})\vp_{VV}^{2}} \geq 0,
\]
which implies (\ref{Third order terms eqn 1}). Hence we assume that $\mu_{n}<0$. By Lemma \ref{Properties of Lagrangian} (1), we have
\[
\begin{split}
\sum_{i,j=0}^{n}\frac{(\mu_{i}+\mu_{j})|V(\ti{\alpha}_{i\ov{j}})|^{2}}{(1+\mu_{i}^{2})(1+\mu_{j}^{2})\vp_{VV}^{2}}
\geq {} & \sum_{k=0}^{n}\frac{2\mu_{k}|V(\ti{\alpha}_{k\ov{k}})|^{2}}{(1+\mu_{k}^{2})^{2}\vp_{VV}^{2}} \\
= {} & \sum_{k=0}^{n-1}\frac{2\mu_{k}|V(\ti{\alpha}_{k\ov{k}})|^{2}}{(1+\mu_{k}^{2})^{2}\vp_{VV}^{2}}
-\frac{2|\mu_{n}| |V(\ti{\alpha}_{n\ov{n}})|^{2}}{(1+\mu_{n}^{2})^{2}\vp_{VV}}.
\end{split}
\]
Since we assume $\vp_{VV}\geq1$, it suffices to prove
\[
\frac{|\mu_{n}| |V(\ti{\alpha}_{n\ov{n}})|^{2}}{(1+\mu_{n}^{2})^{2}}
\leq \sum_{k=0}^{n-1}\frac{\mu_{k}|V(\ti{\alpha}_{k\ov{k}})|^{2}}{(1+\mu_{k}^{2})^{2}}.
\]
Applying $V$ to the (eqn), we obtain
\[
F^{i\ov{i}}V(\ti{\alpha}_{i\ov{i}}) = V(h),
\]
which implies
\[
\frac{V(\ti{\alpha}_{n\ov{n}})}{1+\mu_{n}^{2}}
= V(h)-\sum_{k=0}^{n-1}\frac{V(\ti{\alpha}_{k\ov{k}})}{1+\mu_{k}^{2}}.
\]
Using the Cauchy-Schwarz inequality twice, we see that
\[
\begin{split}
\frac{|\mu_{n}| |V(\ti{\alpha}_{n\ov{n}})|}{(1+\mu_{n}^{2})^{2}}
\leq {} & (1+\delta)|\mu_{n}|\left|\sum_{k=0}^{n-1}\frac{V(\ti{\alpha}_{k\ov{k}})}{1+\mu_{k}^{2}}\right|
+\left(1+\frac{1}{\delta}\right)|\mu_{n}||V(h)| \\
\leq {} & (1+\delta)|\mu_{n}|\left(\sum_{k=0}^{n-1}\frac{1}{\mu_{k}}\right)\left( \sum_{k=0}^{n-1}\frac{\mu_{k}|V(\ti{\alpha}_{k\ov{k}})|^{2}}{(1+\mu_{k}^{2})^{2}}\right) +\frac{C}{\delta},
\end{split}
\]
where $\delta$ is a constant to be determined later.

On the other hand, by Lemma \ref{Properties of Lagrangian} (2), we have
\[
\sum_{k=0}^{n-1}\frac{1}{\mu_{k}} \leq \frac{n}{\tan(\eta)}.
\]
Applying Lemma \ref{Properties of Lagrangian} (2) again, we obtain
\[
\sum_{k=0}^{n-1}\frac{1}{\mu_{k}} \leq \sum_{k=0}^{n}\frac{1}{\mu_{k}}-\frac{1}{\mu_{n}} \leq -\tan(\eta)+\frac{1}{|\mu_{n}|}.
\]
Hence,
\[
|\mu_{n}|\left(\sum_{k=0}^{n-1}\frac{1}{\mu_{k}}\right)
\leq \min\left(\frac{n|\mu_{n}|}{\tan(\eta)},1-|\mu_{n}|\tan(\eta)\right)
\leq \frac{n}{n+\tan^{2}(\eta)}.
\]
Now we choose $\delta=\frac{\tan^{2}(\eta)}{n}$, it then follows that
\[
\begin{split}
\frac{|\mu_{n}| |V(\ti{\alpha}_{n\ov{n}})|^{2}}{(1+\mu_{n}^{2})^{2}}
\leq {} & \frac{n(1+\delta)}{n+\tan^{2}(\eta)}\sum_{k=0}^{n-1}\frac{\mu_{k}|V(\ti{\alpha}_{k\ov{k}})|^{2}}{(1+\mu_{k}^{2})^{2}}+\frac{C}{\delta} \\
\leq {} & \sum_{k=0}^{n-1}\frac{\mu_{k}|V(\ti{\alpha}_{k\ov{k}})|^{2}}{(1+\mu_{k}^{2})^{2}}+C,
\end{split}
\]
as desired.
\end{proof}

Now we complete the proof of Theorem \ref{C11 estimate}. By the maximum principle, at $x_{0}$, we have
\[
\begin{split}
0 \geq L\hat{Q}
= {} & \frac{L(g_{VV}^{-1}\vp_{VV})}{\vp_{VV}}-\sum_{i=0}^{n}\frac{F^{i\ov{i}}|\de_{i}(\vp_{VV})|^{2}}{\lambda_{1}^{2}}
 +f'L(|\nabla^{X}\vp|^{2})+f''\sum_{i=0}^{n}F^{i\ov{i}}|\de_{i}(|\nabla^{X}\vp|^{2})|^{2} \\
\geq {} & \sum_{i,j=0}^{n}\frac{\mu_{i}+\mu_{j}}{(1+\mu_{i}^{2})(1+\mu_{j}^{2})}|V(\ti{\alpha}_{i\ov{j}})|^{2}
+(f''-(f')^{2})\sum_{i=0}^{n}F^{i\ov{i}}|\de_{i}(|\nabla^{X}\vp|^{2})|^{2} \\
& +f'\sum_{i=0}^{n}\sum_{k=1}^{n}F^{i\ov{i}}\left(|\de_{i}W_{k}(\vp)|^{2}+|\de_{i}W_{\ov{k}}(\vp)|^{2}\right)
-Cf'-C
\end{split}
\]
Using Lemma \ref{Third order terms}, $f''-(f')^{2}=0$ and $f'\leq C$, we obtain
\begin{equation}\label{C11 estimate eqn 1}
\sum_{i=0}^{n}\sum_{k=1}^{n}F^{i\ov{i}}\left(|\de_{i}W_{k}(\vp)|^{2}+|\de_{i}W_{\ov{k}}(\vp)|^{2}\right) \leq C.
\end{equation}
We claim
\begin{equation}\label{C11 estimate claim}
\sum_{i,j=1}^{n}|W_{i}W_{j}(\vp)| \leq C.
\end{equation}
Given this claim, by the definition of $V$ and Lemma, we have
\[
\vp_{VV} \leq C\sum_{i,j=1}^{n}\left(|W_{i}W_{j}(\vp)|+|W_{i}W_{\ov{j}}(\vp)|\right) \leq C,
\]
as required.

It suffices to prove the claim (\ref{C11 estimate claim}). Recalling Lemma \ref{Estimate 2}, there exists a constant $C_{0}$ such that
\[
|\eta_{0i}|+|\eta^{0i}| \leq \frac{C_{0}}{\sqrt{\mu_{0}}}, \ \text{for $1\leq i \leq n$}.
\]
The proof of the claim (\ref{C11 estimate claim}) splits into two cases:

\bigskip
\noindent
{\bf Case 1:} \ $\mu_{0}\leq 4C_{0}^{2}$.
\bigskip

In this case, we have $F^{i\ov{i}} \geq C^{-1}$ for $0\leq i\leq n$. From (\ref{C11 estimate eqn 1}), we obtain
\[
\sum_{i=0}^{n}\sum_{k=1}^{n}|\de_{i}W_{k}(\vp)|\leq C.
\]
Therefore,
\[
\sum_{i,k=1}^{n}|W_{i}W_{k}(\vp)|
= \sum_{i,k=1}^{n}\left|\sum_{p=0}^{n}\rho^{ip}\de_{p}W_{k}(\vp)\right|
\leq C\sum_{i=0}^{n}\sum_{k=1}^{n}|\de_{i}W_{k}(\vp)|\leq C.
\]

\bigskip
\noindent
{\bf Case 2:} \ $\mu_{0}\geq 4C_{0}^{2}$.
\bigskip

In this case, we have $F^{i\ov{i}} \geq C^{-1}$ for $1\leq i\leq n$. From (\ref{C11 estimate eqn 1}), we obtain
\[
\sum_{i=1}^{n}\sum_{k=1}^{n}|\de_{i}W_{k}(\vp)|\leq C.
\]
Combining (\ref{C11 estimate eqn 1}) and Lemma \ref{Estimate 2}, we compute
\begin{equation}\label{C11 estimate case 2 eqn 1}
\begin{split}
\sum_{i,k=1}^{n}|W_{i}W_{k}(\vp)|
= {} & \sum_{i,k=1}^{n}\left|\sum_{p=0}^{n}\rho^{ip}\de_{p}W_{k}(\vp)\right| \\
\leq {} & \sum_{i,k,p=1}^{n}|\rho^{ip}\de_{i}W_{k}(\vp)|+\sum_{i,k=1}^{n}|\rho^{i0}\de_{0}W_{k}(\vp)| \\
\leq {} & C+\frac{C_{0}}{\sqrt{\mu_{0}}}\sum_{k=1}^{n}|\de_{0}W_{k}(\vp)|.
\end{split}
\end{equation}
Using Lemma \ref{Estimate 1}, we have
\begin{equation}\label{C11 estimate case 2 eqn 2}
\begin{split}
\sum_{k=1}^{n}|\de_{0}W_{k}(\vp)|
\leq {} & \sum_{k=1}^{n}\sum_{p=0}^{n}|\rho_{0p}W_{p}W_{k}(\vp)| \\
\leq {} & \sum_{k,p=1}^{n}|W_{p}W_{k}(\vp)|+\sum_{k=1}^{n}|W_{0}W_{k}(\vp)| \\
\leq {} & \sum_{i,k=1}^{n}|W_{i}W_{k}(\vp)|+C\sqrt{\mu_{0}}.
\end{split}
\end{equation}
Substituting (\ref{C11 estimate case 2 eqn 2}) into (\ref{C11 estimate case 2 eqn 1}), we conclude that
\[
\sum_{i,k=1}^{n}|W_{i}W_{k}(\vp)|
\leq \frac{C_{0}}{\sqrt{\mu_{0}}}\sum_{i,k=1}^{n}|W_{i}W_{k}(\vp)|+C.
\]
Since $\mu_{0}\geq 4C_{0}^{2}$, we obtain
\[
\sum_{i,j=1}^{n}|W_{i}W_{j}(\vp)| \leq C.
\]
\end{proof}

\subsection{Examples}
In this subsection we construct some examples which show that the weak geodesics in $\mathcal{H}$ are not $C^{2}$ in general.

\subsubsection{Manifolds of dimension $n=1$}
Let $(X,\omega)$ be a compact $1$-dimensional K\"ahler manifold and $\alpha=\omega$. Recalling the definition of $\hat{\theta}$, it is clear that $\hat{\theta}=\frac{\pi}{4}$, and hence $[\omega]$ has hypercritical phase. We consider the space of K\"ahler potentials with respect to $2\omega$:
\[
\mathcal{H}_{{\rm PSH}} :=
\{\phi\in C^{\infty}(X)~|~2\omega+\ddbar\phi>0\}.
\]
Since
\[
{\rm Re}\left(e^{-\sqrt{-1}\hat{\theta}}(\omega+\sqrt{-1}\omega_{\phi})\right)
= \frac{\sqrt{2}}{2}(2\omega+\ddbar\phi),
\]
we have
\[
\mathcal{H}=\mathcal{H}_{{\rm PSH}}.
\]

In this case one can easily check that the Riemannian structure on $\mathcal{H}$ studied here agrees exactly with the Donaldson-Mabuchi-Semmes Riemannian structure.  In particular, they have the same geodesics.  Let $\Phi$ denote this common geodesic.

Consider the following concrete example. Let $(T,\omega_{T})$ be the standard $1$-dimensional complex torus and $f$ be the holomorphic isometry induced by $z\mapsto-z$ in $\mathbb{C}$. By \cite[Theorem 1.1]{DL12} (see also \cite{LV13,Darvas14}), there exist $\phi_{0},\phi_{1}\in\mathcal{H}_{{\rm PSH}}$ such that the weak geodesic $\Psi$ joining them is not $C^{2}$. Combining this with the above argument, we obtain $\Phi\notin C^{2}(T\times\mathcal{A})$.

\subsubsection{Manifolds of dimension $n>1$} Let $(M,\omega_{M})$ be a compact $(n-1)$-dimensional K\"ahler manifold. We consider the product manifold $T\times M$, where $T$ is the torus from before, and denote the projection from $T\times M$ to $T$, $M$ by $p_{1}$, $p_{2}$ respectively. Then
\[
(X,\omega) := (T\times M,p_{1}^{*}\omega_{T}+p_{2}^{*}\omega_{M})
\]
is a compact $n$-dimensional K\"ahler manifold. We define
\[
\alpha = p_{1}^{*}\omega_{T}+Ap_{2}^{*}\omega_{M},
\]
where $A$ is a positive constant to be determined. It is clear that
\[
\hat{\theta} = \frac{\pi}{4}+(n-1)\arctan A.
\]
Choosing $A$ sufficiently large, $\hat{\theta}$ satisfies the ``hypercritical phase" condition:
\[
\hat{\theta} \in \left((n-1)\frac{\pi}{2},n\frac{\pi}{2}\right).
\]
For convenience, we use $\phi_{0}$, $\phi_{1}$, $\Phi$ denote the same functions as above. It is not hard to check that $p_{1}^{*}\phi_{0},p_{1}^{*}\phi_{1}\in\mathcal{H}$ and $p^{*}\Phi$ is the unique weak geodesic joining $p_{1}^{*}\phi_{0},p_{1}^{*}\phi_{1}$, where $p$ is the projection $X\times\mathcal{A}\rightarrow T\times\mathcal{A}$. Since $\Phi\notin C^{2}(T\times\mathcal{A})$, we have $p^{*}\Phi\notin C^{2}(X\times\mathcal{A})$.

\end{document}